\newcommand{\eps}{{\varepsilon}}
\theoremstyle{plain}
\newtheorem{theorem}{Theorem}
\newtheorem{proposition}[theorem]{Proposition}
\newtheorem{lemma}[theorem]{Lemma}
\newtheorem{corollary}[theorem]{Corollary}
\theoremstyle{definition}
\newtheorem{definition}[theorem]{Definition}
\newtheorem{remark}[theorem]{Remark}
\numberwithin{equation}{section}
\numberwithin{theorem}{section}
\let\Re=\undefined\DeclareMathOperator*{\Re}{Re}
\let\Im=\undefined\DeclareMathOperator*{\Im}{Im}
\def\ge{\geqslant}
\def\le{\leqslant}
\def\geq{\geqslant}
\def\leq{\leqslant}
\def\b{\boldsymbol}
\def\N{\mathbb{N}}
\def\R{\mathbb{R}}
\def\C{\mathbb{C}}
\begin{document}
\title[Energy-critical quadratic Schr\"odinger system in $\mathbb{R}^6$]{On the energy-critical quadratic nonlinear Schr\"odinger system with three waves}

\author[F. Meng]{Fanfei Meng}
\address{Qiyuan Lab,  \ Beijing, \ China, \ 100095}
\email{mengfanfei@qiyuanlab.com}

\author[S. Wang]{Sheng Wang}
\address{Shanghai Center for Mathematical Sciences, Fudan University,  \ Shanghai, \ China, \ 200433}
\email{19110840011@fudan.edu.cn}
	
\author[C. Xu]{Chengbin Xu}
\address{School of Mathematics and Statistics, Qinghai Normal University,  \ Xining, Qinghai, \ China, \ 810008}
\email{xcbsph@163.com}

\subjclass[2010]{Primary: 35Q55; Secondary: 35B44}
\date{\today}
\keywords{Energy-critical, quadratic nonlinear Schr\"odinger system, scattering, new physically conserved quantity.}
\maketitle

\begin{abstract}
In this article, we consider the dynamics of the energy-critical quadratic nonlinear Schr\"odinger system
\[
\left\{
\begin{aligned}
& i u^1_t + \kappa_1 \Delta u^1 = -\overline{u^2}u^3, \\
& i u^2_t + \kappa_2 \Delta u^2 = -\overline{u^1}u^3, \\
& i u^3_t + \kappa_3 \Delta u^3 = -u^1u^2, \\
\end{aligned}
\right. \qquad (t, x) \in \R \times \R^6
\]
in energy-space $ {\dot H}^1 \times {\dot H}^1\times{\dot H}^1 $,  where the sign of potential energy can not be determined.
We prove the scattering theory with mass-resonance (or with radial initial data) below ground state via concentration compactness method.
We discover a family of new physically conserved quantities with mass-resonance which play an important role in the proof of scattering.
\end{abstract}

\maketitle
\section{Introduction}
Considering the Cauchy problem for the quadratic nonlinear Schr\"odinger system:
\begin{equation}\tag{$\text{NLS system}$}
\left\{
\begin{aligned}
& i \partial_{t}\b{\rm u} + A \b{\rm u}  =  \b{\rm f}(\b{\rm u}),\\
& \b{\rm u}(0,x)= \b{\rm u}_0(x),
\end{aligned}
\right. \qquad (t, x) \in \mathbb{R} \times \mathbb{R}^d,
\label{NLS system}
\end{equation}
where $ \b{\rm u} $ and $ \b{\rm u}_0 $ are all vector-valued complex functions with three components defined as follow:

\begin{equation}
\b{\rm u} := \left(
\begin{aligned}
u^1 \\
u^2 \\
u^3
\end{aligned}
\right), \quad \b{\rm u_0} := \left(
\begin{aligned}
u^1_0 \\
u^2_0 \\
u^3_0
\end{aligned}
\right),
\end{equation}
and  $ A $ is a $ 3 \times 3$ matrix, $ \b{\rm f} : \C^3 \to \C^3  $ as follow
\begin{equation}\label{mass-res}
A=
\begin{pmatrix}
 \kappa_1 \Delta & 0 & 0  \\
0                & \kappa_2 \Delta  & 0   \\
0                & 0                      & \kappa_3 \Delta
\end{pmatrix}, \quad \b{\rm f}(\b{\rm u}):= \left(
\begin{aligned}
-\overline{u^2}u^3 \\
-\overline{u^1}u^3 \\
-u^1 u^2
\end{aligned}
\right),
\end{equation}	
where $ u^{l} : \mathbb{R} \times \mathbb{R}^{d} \rightarrow \mathbb{C} $ are unknown functions and real numbers $ \kappa_{l} := \frac1{2 m_l} \in \left(0,\infty\right) $ are the inverses of three particle mass for $ l = 1, 2, 3 $.

\subsection{Background}
In terms of physics, the quadratic nonlinearity of \eqref{NLS system} arises from a model in the nonlinear optics describing the second harmonic generation and Raman amplification phenomenon in plasma.
This process is a nonlinear instability phenomenon (see \cite{Colin 2009} for more detail).
\eqref{NLS system} can be deduced from the following nonlinear Klein-Gordon system
\begin{equation}\tag{$\text{NLKG}$}
\left\{
\begin{aligned}
& \frac1{2c^2m_1} \partial_t^2 u^1 - \frac1{2m_1} \Delta u^1 + \frac{c^2m_1}2 u^1 = - \mu_1 \overline{u^2} u^3, \\
& \frac1{2c^2m_2} \partial_t^2 u^2 - \frac1{2m_2} \Delta u^2 + \frac{c^2m_2}2 u^2 = - \mu_2 \overline{u^1} u^3, \\
& \frac1{2c^2m_3} \partial_t^2 u^3 - \frac1{2m_3} \Delta u^3 + \frac{c^2m_3}2 u^3 = - \mu_3 u^1 u^2,
\end{aligned}
\right. \qquad (t, x) \in \R \times \R^d
\label{NLKG}
\end{equation}
where $ c = 3.0 \times 10^8 m/s $ is the velocity of light, $ u^l : \R \times \R^d \to \C $ is the wave function, $ m_l > 0 $ is the mass of particle $ u^l $, and the three $ \mu_l $ have the same sign for $ l = 1, 2, 3 $.
Define
\begin{equation}
\left(
\begin{aligned}
u^1_c \\
u^2_c \\
u^3_c
\end{aligned}
\right) := \left(
\begin{aligned}
e^{itc^2m_1} u^1 \\
e^{itc^2m_2} u^2 \\
e^{itc^2m_3} u^3
\end{aligned}
\right),
\end{equation}
and we can compute the derivatives in \eqref{NLKG} using $ u^l = e^{-itc^2m_l} u^l_c $ to get
\begin{equation*}
\begin{aligned}
\partial_t u^l = & ~ -ic^2m_l e^{-itc^2m_l} u^l_c + e^{-itc^2m_l} \partial_t u^l_c, \\
\partial_t^2 u^l = & ~ -c^4m_l^2 e^{-itc^2m_l} u^l_c - 2ic^2m_l e^{-itc^2m_l} \partial_t u^l_c + e^{-itc^2m_l} \partial_t^2 u^l_c.
\end{aligned}
\end{equation*}
Then,
\begin{equation*}
\frac1{2c^2m_l} \partial_t^2 u^l + \frac{c^2m_l}2 u^l = e^{-itc^2m_l} (-i \partial_t u^l_c + \frac1{2c^2m_l} \partial_t^2 u^l_c), \qquad \forall ~ l = 1, 2, 3.
\end{equation*}
and so
\begin{equation}\tag{$\text{NLW/NLS}$}
\left\{
\begin{aligned}
& \frac1{2c^2m_1} \partial_t^2 u^1_c - i \partial_t u^1_c - \frac1{2m_1} \Delta u^1_c = - e^{itc^2(m_1 + m_2 - m_3)} \mu_1 \overline{u^2_c} u^3_c, \\
& \frac1{2c^2m_2} \partial_t^2 u^2_c - i \partial_t u^2_c - \frac1{2m_2} \Delta u^2_c = - e^{itc^2(m_2 + m_1 - m_3)} \mu_2 \overline{u^1_c} u^3_c, \\
& \frac1{2c^2m_3} \partial_t^2 u^3_c - i \partial_t u^3_c - \frac1{2m_3} \Delta u^3_c = - e^{itc^2(m_3 - m_1 - m_2)} \mu_3 u^1_c u^2_c.
\end{aligned}
\right.
\label{NLW/NLS}
\end{equation}
Under the mass resonance condition
\begin{equation}\tag{$\text{mass resonance}$}
m_1 + m_2 = m_3,
\label{mass resonance}
\end{equation}
we take non-relativistic limit, i.e., $ c \to \infty $, to obtain
\begin{equation}\tag{$\text{NLSm}$}
\left\{
\begin{aligned}
& i \partial_t u^1_c + \frac1{2m_1} \Delta u^1_c = \mu_1 \overline{u^2_c} u^3_c, \\
& i \partial_t u^2_c + \frac1{2m_2} \Delta u^2_c = \mu_2 \overline{u^1_c} u^3_c, \\
& i \partial_t u^3_c + \frac1{2m_3} \Delta u^3_c = \mu_3 u^1_c u^2_c.
\end{aligned}
\right.
\label{NLSm}
\end{equation}
Lastly, constructing new functions
\begin{equation}
\left(
\begin{aligned}
v^1 (t, x) \\
v^2 (t, x) \\
v^3 (t, x)
\end{aligned}
\right) := \left(
\begin{aligned}
- \sqrt{\mu_2 \mu_3} u^1_c (t, \frac{x}{\sqrt{2m_1}}) \\
- \sqrt{\mu_1 \mu_3} u^2_c (t, \frac{x}{\sqrt{2m_2}}) \\
- \sqrt{\mu_1 \mu_2} u^3_c (t, \frac{x}{\sqrt{2m_3}})
\end{aligned}
\right),
\label{v}
\end{equation}
and denoting
\begin{equation}
\kappa_l = \frac1{2m_l}, \qquad l = 1, 2, 3,
\label{kappa}
\end{equation}
we have \eqref{NLS system} finally.

\begin{remark}
\eqref{NLS system} is the non-relativistic limit of quadratic nonlinear Klein-Gordon system both focusing and defocusing.
Indeed, we can find from \eqref{v} that the three $ \mu_l, l = 1, 2, 3 $ have the same sign in \eqref{NLKG}.
\end{remark}

It is important that \eqref{NLS system} is the form of infinite dimensional Hamilton if we treat $ \C \sim \R^2 $.
\begin{equation}\tag{$\text{IDH}$}
\partial_{t} \b{\rm u} = S_6 H^{\prime}(\b{\rm u}),
\label{IDH}
\end{equation}
where $ S_6 $ is the standard $ 6 \times 6 $ symplectic matrix and $ H = H(\b{\rm u}) $ is the Hamiltonian with
\begin{equation}
H(\b{\rm u}) = \sum_{i=1}^3 \frac{\kappa_i}{2}\Vert u^i \Vert_{{\dot H}^1}^2 - \Re \int_{\mathbb{R}^d} \overline{u^1}\overline{u^2}u^3 {\rm d}x.
\label{Hamiltonian}
\end{equation}
Thus, by Emmy Noether's theorem, the solutions to \eqref{NLS system} conserve the \emph{mass}, \emph{energy} and \emph{momentum}, corresponding to the transfomations of rotation of phase, translation of time and translation of space respectively, defined as follow:
\begin{equation*}
\begin{aligned}
M(\b{\rm u}) :& = \frac{1}{2}\Vert u^1 \Vert_{L^2}^2 + \frac{1}{2} \Vert u^2 \Vert_{L^2}^2 +\Vert u^3 \Vert_{L^2}^2  \equiv M(\b{\rm u}_{0}), \\
E(\b{\rm u}) :& = K(\b{\rm u}) -V(\b{\rm u}) \equiv E(\b{\rm u}_{0}), \\
P(\b{\rm u}) :& = \Im \int_{\mathbb{R}^d} \left( \overline{u^1}\nabla u^1 + \overline{u^2}\nabla u^2 + \overline{u^3}\nabla u^3 \right) {\rm d}x \equiv P(\b{\rm u}_0),
\end{aligned}
\end{equation*}
where
\[
\begin{aligned}
& (\text{kinetic energy}) \quad & K(\b{\rm u}): & = \frac{\kappa_1}{2}\Vert u^1 \Vert_{{\dot H}^1}^2 + \frac{\kappa_2}{2} \Vert u^2 \Vert_{{\dot H}^1}^2 + \frac{\kappa_3}{2} \Vert u^3 \Vert_{{\dot H}^1}^2  , \\
& (\text{potential energy}) \quad & V(\b{\rm u}) :& = \Re \int_{\mathbb{R}^d} \overline{u^1}\overline{u^2}u^3 {\rm d}x.
\end{aligned}
\]

\begin{remark}
On one hand, we can compute $ \partial_t \b{\rm u} $ by \eqref{NLS system}
\begin{equation*}
\partial_t \b{\rm u} := \left(
\begin{aligned}
\partial_t u^1 \\
\partial_t u^2 \\
\partial_t u^3
\end{aligned}
\right) \sim \left(
\begin{aligned}
\partial_t u^{1, 1} \\
\partial_t u^{1, 2} \\
\partial_t u^{2, 1} \\
\partial_t u^{2, 2} \\
\partial_t u^{3, 1} \\
\partial_t u^{3, 2}
\end{aligned}
\right) = \left(
\begin{aligned}
-\kappa_1\Delta u^{1, 2} - u^{2, 1}u^{3, 2} + u^{2, 2}u^{3, 1} \\
\kappa_1\Delta u^{1, 1} + u^{2, 1}u^{3, 1} + u^{2, 2}u^{3, 2} \\
-\kappa_2\Delta u^{2, 2} - u^{1, 1}u^{3, 2} + u^{1, 2}u^{3, 1} \\
\kappa_2\Delta u^{2, 1} + u^{1, 1}u^{3, 1} + u^{1, 2}u^{3, 2} \\
-\kappa_3\Delta u^{3, 2} - u^{1, 1}u^{2, 2} - u^{1, 2}u^{2, 1} \\
\kappa_3\Delta u^{3, 1} + u^{1, 1}u^{2, 1} - u^{1, 2}u^{2, 2}
\end{aligned}
\right),
\end{equation*}
where $ u^{l, 1} := \Re u^l $ and $ u^{l, 2} := \Im u^l $ for $ l = 1, 2, 3 $.

On the other hand, we can calculate the variation by \eqref{Hamiltonian} and
\begin{equation*}
\langle H^{\prime}(\b{\rm u}), \b{\rm v} \rangle = \left. \frac{\rm d}{{\rm d}\varepsilon} \right\vert_{\varepsilon=0} H(u^1+\varepsilon v^1, u^2+\varepsilon v^2, u^3+\varepsilon v^3)
\end{equation*}
where $ \b{\rm v} \in C_c^{\infty}(\R^6) $ to know
\begin{equation*}
H^{\prime}(\b{\rm u}) = \left(
\begin{aligned}
-\kappa_1\Delta u^{1, 1} - u^{2, 1}u^{3, 1} - u^{2, 2}u^{3, 2} \\
-\kappa_1\Delta u^{1, 2} + u^{2, 2}u^{3, 1} - u^{2, 1}u^{3, 2} \\
-\kappa_2\Delta u^{2, 1} - u^{1, 1}u^{3, 1} - u^{1, 2}u^{3, 2} \\
-\kappa_2\Delta u^{2, 2} + u^{1, 2}u^{3, 1} - u^{1, 1}u^{3, 2} \\
-\kappa_3\Delta u^{3, 1} - u^{1, 1}u^{2, 1} - u^{1, 2}u^{2, 2} \\
-\kappa_3\Delta u^{3, 2} + u^{1, 1}u^{2, 2} - u^{1, 2}u^{2, 1}
\end{aligned}
\right).
\end{equation*}

To sum up, we know $ S_6 $ in \eqref{IDH} is
\begin{equation*}
S_6 := diag(S_2; S_2; S_2) =
\begin{pmatrix}
0 & -1 & 0 & 0 & 0 & 0 \\
1 & 0 & 0 & 0 & 0 & 0 \\
0 & 0 & 0 & -1 & 0 & 0 \\
0 & 0 & 1 & 0 & 0 & 0 \\
0 & 0 & 0 & 0 & 0 & -1 \\
0 & 0 & 0 & 0 & 1 & 0 \\
\end{pmatrix}.
\end{equation*}
\end{remark}

Furthermore, this system is invariant under the gauge transform $ (u^1, u^2, u^3) \to (e^{i\theta_1} u^1, e^{i\theta_2} u^2, e^{i(\theta_1 + \theta_2)} u^3) $ and the following scaling
$$\b{\rm u}_{\lambda}(t,x):= \lambda^2 \b{\rm u} (\lambda^2 t, \lambda x), \quad \lambda > 0$$
which means that $\b{\rm u}_{\lambda}(t,x)$ is also a solution of \eqref{NLS system}. Computing the homogeneous Sobolev norm,
$$\|\b{\rm u}_{\lambda}(\cdot,0)\|_{\dot{H^s}(\mathbb{R}^d)}=\lambda^{s-\frac{d}{2}+2}\|\b{\rm u}_0\|_{\dot{H^s}(\mathbb{R}^d)},$$
This gives the critical index
$$s_c= \frac{d}{2} - 2$$
which leaves the scaling symmetry invariant.
The case $s_c=1$ is called energy-critical and $0<s_c <1$ is called mass-energy inter-critical.

\subsection{Methods and results}
Now, we review some basic facts about the nonlinear Schr\"odinger equation:
\[\tag{$\text{NLS}$}
i\partial_{t}u + \Delta u = \mu \vert u \vert^{p-1} u , \quad (t, x) \in \mathbb{R} \times \mathbb{R}^d,
\label{NLS}
\]
The case $\mu=1$ is the defocusing case, while $\mu=-1$ gives the focusing case.
For the defoucsing case, the potential energy is positive, while for the focusing case, the potential energy is negative.
In contrast to the classical nonlinear  Schr\"odinger equation \eqref{NLS}, the \eqref{NLS system} do not have classification of the so-called focusing and defocusing cases, since we can not figure out the sign  of potential energy.

In this article, we study the energy-critical \eqref{NLS system} in energy space, i.e, $d=6$. For convenience, we introduce the following notation:
\begin{align}\nonumber
	{{\dot{\rm H}}^s}:= {\dot H}^s \times {\dot H}^s \times {\dot H}^s \quad s\ge 0, \\ \nonumber
	{\rm L}^p:=  L^p \times L^p\times L^p \quad p\ge 1.
\end{align}
We also denote the Schr\"odinger flow $ \verb"S"(t) $ as follows
$$
\verb"S"(t) :=
\begin{pmatrix}
e^{ \kappa_1 it\Delta} & 0 & 0  \\
0                & e^{\kappa_2 it\Delta}  & 0   \\
0                & 0                      & e^{\kappa_2 it\Delta}
\end{pmatrix}.
$$

\begin{definition}(Solution)
A function $\b{\rm u}: I\times \mathbb{R}^6 \to \C^3$  on a nonempty time interval $ 0\in I \subset \R$ is a solution to \eqref{NLS system} if it lies in the class $C^{0}_{t}(I, {\rm\dot H}_{x}^1(\mathbb{R}^6)) \cap {\rm L}_{t, x}^4(I \times \mathbb{R}^6)$, and satisfies Duhamel formula
\[
\b{\rm u}(t) = \verb"S"(t) \b{\rm u}_0 + i \int_{0}^{t} \verb"S"(t - \tau) \b{\rm f}(\b{\rm u}(\tau)) {\rm d}\tau .
\label{def}
\]
We refer to the interval $I$ as the lifespan of $u$. We say that $u$ is a maximal-lifespan solution if solution cannot be extended to any strictly larger interval. We say that $u$ is a global solution if $I=\R$.
\end{definition}
\begin{definition}[Scattering size, \cite{Killip2010}]\label{scattering size}
	The \emph{scattering size} of a solution $ \b{\rm u} = (u^1, u^2,u^3)^T $ to \eqref{NLS system} on the interval $ 0 \in I $ is
	\[
	S_I (\b{\rm u}) = \Vert \b{\rm u} \Vert_{{\rm L}_{t, x}^4(I \times \R^6)}^4 \sim \int_I \int_{\R^6} \vert u^1(t, x) \vert^4 + \vert u^2(t, x) \vert^4 +\vert u^3(t, x) \vert^4 {\rm d}x {\rm d}t.
	\]
\end{definition}
\begin{definition}(Blow-up)
	We say that a solution $\b {\rm u}$ to \ref{NLS system} blows up forward in time if there exists a time $t_1 \in I$ such that
	\[
	S_{[t_1,\sup I)} (\b{\rm u}) = \infty,
	\]
	and that $\b {\rm u}$ blows up backward in time if there exists a time $t_1 \in I$ such that
	\[
	S_{(\inf I,t_1]} (\b{\rm u}) = \infty.
	\]
\end{definition}
\begin{definition}(Scattering)
	We say a global solution $\b{\rm u}$ is scattering if there exists $\b{\rm u}_{\pm} \in {\rm\dot H}^1(\mathbb{R}^6)$ such that
	\begin{equation*}
	\lim_{t \to \pm \infty} \left\Vert \b{\rm u}(t) - \verb"S"(t) \b{\rm u}_{\pm} \right\Vert_{{\rm\dot H}^1(\mathbb{R}^6)} = 0.
	\end{equation*}
\end{definition}
 \begin{remark}[$ {\rm\dot H}^1 $ scattering]\label{sc}
	It is easy to check that the solution to \eqref{NLS system} will scatter in $ {\rm\dot H}^1(\mathbb{R}^6) $ as $ t \to + \infty $, if the solution $ \b{\rm u}(t) $ to \eqref{NLS system} is global in $ {\rm\dot H}^1(\mathbb{R}^6) $ with finite scattering size on $ \R $, i.e., $ S_{\R} (\b{\rm u}) < + \infty $.
\end{remark}
Similarly to the classical \eqref{NLS}, we can define the Galilean transformation to the \eqref{NLS system} as follows:
\[
\left(
\begin{aligned}
u^1(t, x) \\
u^2(t, x) \\
u^3(t,x)
\end{aligned}
\right) \to \left(
\begin{aligned}
e^{i\frac{x\cdot\xi}{\kappa_1}}e^{-it\frac{\vert\xi\vert^2}{\kappa_1}}u^1(t, x-2t\xi) \\
e^{i\frac{x\cdot\xi}{\kappa_2}}e^{-it\frac{\vert\xi\vert^2}{\kappa_2}}u^2(t, x-2t\xi) \\
e^{i\frac{x\cdot\xi}{\kappa_3}}e^{-it\frac{\vert\xi\vert^2}{\kappa_3}}u^3(t, x-2t\xi)
\end{aligned}
\right).
\]
It is easy to check that the \eqref{NLS system} is invariance under Galiean transformation only if the coefficient satisfies
\begin{equation}
\frac{1}{\kappa_3}=\frac{1}{\kappa_1} + \frac{1}{\kappa_2}.
\label{mr}
\end{equation}
By \eqref{mass resonance} and \eqref{kappa}, we know \eqref{mr} is the same condition as mass resonance.
Besides, there are some reference about Galilean transformation and mass-resonance, such as \cite{Ardila2021}, \cite{Gao2021}, \cite{Meng2021}.

\begin{remark}
Moreover, we can explore a family of new physically conserved quantities under mass-resonance:
\begin{equation} \label{New}
\Lambda(\b{\rm u}) := \frac{1}{2\kappa_1} \Vert u^1 \Vert^2_{L^2} + \frac{1}{2\kappa_2} \Vert u^2 \Vert^2_{L^2} + \frac{1}{2\kappa_3} \Vert u^3 \Vert^2_{L^2}
\end{equation}
by clever observation, which plays a key role to exclude the  soliton-type solution.
More precisely, in subsection \ref{SS}, we rely on the new physically conserved quantity to prove the momentum equals to zero.
Then we obtain some compactness conditions.
Combining these facts, we get a result which is contradict with the characteristics of solitons.

It is obvious that the conserved mass $ M(\b{\rm u}) $ is a special $ \Lambda(\b{\rm u}) $ when $ (\kappa_1, \kappa_2, \kappa_3) = (1, 1, \frac12) $.
Actually, we are even able to define infinite conserved quantities which can be regarded as the generalized of \eqref{New}:
\[
\Lambda_{a, b}(\b{\rm u}) := \frac{a}{2} \Vert u^1 \Vert^2_{L^2} + \frac{b}{2} \Vert u^2 \Vert^2_{L^2} + \frac{a+b}{2} \Vert u^3 \Vert^2_{L^2}, \qquad \forall ~ a, b \in \R.
\]
\end{remark}

Next, we recall some research advances about \eqref{NLS} and \eqref{NLS system}.
In terms of \eqref{NLS}, Bourgain \cite{Bourgain1999} first solved the defocusing energy-critical with radial initial date by energy induction technique.
Based on energy induction technique,  Colliander--Keel--Staffilani--Takaoka--Tao \cite{Colliander2006} introduced the interaction Morawetz estimate to get the well-posedness and scattering for any initial date.
Later, Visan \cite{Visan2007} established the similar results for higher dimensions.
The energy-critical focusing problem has also received a lot of attention.
Kenig--Merle \cite{Kenig2006} first solved this problem with radial case using concentration compactness/rigidity arguments.
Inspired by Kenig--Merle, Killip--Visan \cite{Killip2010} proved the scattering theory for any initial date  in dimension  $d \ge 5$. In addition,  another interesting piece of work was that Miao--Murphy--Zheng \cite{Miao2014} studied the defocusing energy-supcritical in four dimensions.

For the study of \eqref{NLS system}, there are a lot of noteworthy work. Ogawa--Uriya\cite{Ogawa2015}, Ardila \cite{Ardila2018} and Pastor \cite{Pastor2019} studied the 2-dimensional and 1-dimensional \eqref{NLS system} with three wave interaction respectively.
Gao--Meng--Xu--Zheng \cite{Gao2021} proved the scattering theory for the \eqref{NLS system} with two wave interaction in dimension six. For the energy-subucritical case, such as \cite{Hamano2018, Hamano2019, Meng2021, Noguera2020},
they gave a series of results about the well-posedness and dynamics of the \eqref{NLS system}.
Especially, an attractive work was that Ardila--Dinh--Forcella \cite{Ardila2021} studied the energy-subcritical \eqref{NLS system} with two wave interaction for cubic-type nonlinearity under mass-resonance conditions.
In addition, Dinh--Forcella \cite{Dinh 2021} studied the blow-up mechanism about the \eqref{NLS system} with two wave interaction.

In this work, we focus on the energy-critical \eqref{NLS system} with three wave interaction. We will present a classification about the dynamic behavior of the solutions, the fact that they scatter and blow up depends on the relationship between the initial value and the ground state $ \b{\rm W} := (\phi_1, \phi_2,\phi_3)^T \in \Xi$, where $(\phi_1, \phi_2,\phi_3)^T$ is the non-nogentive radial  solution for the following elliptic system:

\begin{equation}
\left\{
\begin{aligned}
& - \kappa_1\Delta \phi_1 = \phi_2 \phi_3, \\
& - \kappa_2\Delta \phi_2 = \phi_1 \phi_3, \\
& - \kappa_3\Delta \phi_3 = \phi_1 \phi_2
\end{aligned}
\right.
\quad x \in \mathbb{R}^6,
\label{gs}
\end{equation}
where $ \Xi :=\left\{ \b{\rm G} \in {\rm\dot H}^1(\R^6)  ~ \big\vert ~ V(\b{\rm G}) \neq 0 \right\} $. And we can show that the ground state is $\b{\rm W} := (\frac{\phi_0}{\sqrt{\kappa_1}}, \frac{\phi_0}{\sqrt{\kappa_2}},\frac{\phi_0}{\sqrt{\kappa_3}})^T$, where $\phi_0$ is a ground state of equation:
\begin{equation}
- \sqrt{ \kappa_1\kappa_2\kappa_3}\Delta \phi_0 = \phi_0^2.
\label{6}
\end{equation}

Then, we introduce our main results:

\begin{theorem}[Bounds of scattering size] \label{Sb}
Let $\b{\rm u}$ : $I\times \R^6 \to \C^3$ be a solution to \eqref{NLS system} satisfying the mass-resonance (or $\b{\rm u}$ is radial without mass-resonance).  If
\begin{equation}
E(\b{\rm u}_0) < E (\b{\rm W}) \quad \text{and} \quad K(\b{\rm u}_0) < K(\b{\rm W}),
\label{EK}
\end{equation}
then
\begin{align*}
\int_I \int_{\R^6} \vert \b{\rm u}(t, x) \vert^4 {\rm d}x {\rm d}t  < \infty.
\end{align*}
\end{theorem}

\begin{corollary}[Scattering]\label{CSb}
Let $ \b{\rm u} $ be a solution to \eqref{NLS system} with maximal lifespan $ I_{\max} $  satisfying the mass-resonance (or $\b{\rm u}$ is radial without mass-resonance). Assume also that \eqref{EK} holds.
Then $ I_{\max} = \R $ and the solution $ \b{\rm u} $ scatters with $ S_{\R}(\b{\rm u}) < \infty $.
\end{corollary}

It is easy to establish the scattering theory for the \eqref{NLS system} if the Corollary \ref{CSb} is true.
At the same time, the energy-critical \eqref{NLS system} is global well-posedness in $ {\rm\dot H}^1(\R^6) $.

\subsection{Outline of the article}
This article is organized as follows:
In Section \ref{PRE}, we introduce some preliminaries, such as some basic harmonic analysis, Strichartz estimates, local well-posedness, linear profile decomposition, Virial-type identity.

In particular, the linear profile decomposition for Schr\"odinger flow, which relies on the inverse Strichartz inequality in Proposition \ref{ISI}, is built in Theorem \ref{lpd}.
It is a powerful tool for recovering compactness, and is often used in conjunction with the following nonlinear profile.

\begin{definition}
Let $ \b{\rm v}(t, x) = \verb"S"(t) \b{\rm v}_0 $ be the solution to linear Schr\"odinger system with initial data $ \b{\rm v}_0 \in {\rm\dot H}^1(\R^6) $, let $ \{ t_n \}_{n=1}^{\infty} $ be a time series satisfying
\[
t_n \to t_{\infty} \in \R \cup \{ \pm \infty \}, \qquad n \to \infty.
\]
We call $ \b{\rm u} $ is the \emph{nonlinear profile} associated to $ (\b{\rm v}_0, \{ t_n \}_{n=1}^{\infty}) $, if there exists a time interval
\[
I \ni t_{\infty}, \qquad (\text{if} ~ \vert t_{\infty} \vert = \infty, ~ \text{then} ~ I = [a, +\infty) ~ \text{or} ~ I = (-\infty, a])
\]
such that $ \b{\rm u} $ is a solution to \eqref{NLS system} defined on $ I $ and
\[
\lim_{n \to \infty} \Vert \b{\rm u}(t_n, \cdot) - \b{\rm v}(t_n, \cdot) \Vert_{{\rm\dot H}^1(\R^6)} = 0.
\]
\end{definition}

After the establishment of small data theory (Theorem \ref{sd}), one can ensure that the nonlinear profile above is well-defined, i.e., one can prove the existence and uniqueness of it, see Corollary \ref{ENP} and Corollary \ref{UNP}.

In Section \ref{VA}, we study the analysis of characterization to the ground state in \eqref{gs}, which will be used to establish the energy trapping theory.

It is worth mentioning that our proof is different from Kenig's strategy in \cite{Kenig2006}, which is caused by the specificity of \eqref{NLS system} -- the sign of the potential energy $ V(\b{\rm u}) $ cannot be determined.
More specifically, on one hand, in establishing $ E(\b{\rm u}) \ge K(\b{\rm u}) $, it is no longer obvious due to the absence of $ V(\b{\rm u}) \ge 0 $, but fortunately the establishment of the coercivity of energy, Proposition \ref{coer}, can overcome this difficulty.
On the other hand, in establishing $ E(\b{\rm u}) \le K(\b{\rm u}) $, we need to control the absolute value of the potential energy $ \vert V(\b{\rm u}) \vert $.
While the Gagliardo-Nirenberg inequality, which degenerates from an interpolation type inequality to an embedded type inequality, only controls the potential energy $ V(\b{\rm u}) $, see Proposition \ref{GN}.
We observe that if we substitute soliton solutions $ \b{\rm u} = (u^1, u^2, u^3)^T = (e^{iat}\phi_1, e^{ibt}\phi_2, e^{i(a+b)t}\phi_3)^T $, a particular family of solutions generated by the ground state $ \b{\rm W}=(\phi_{1}, \phi_{2}, \phi_{3})^{T} $, into the potential energy, then
\[
V(\b{\rm u}) = \Re \int_{\R^6} \overline{u^1} \overline{u^2} u^3 {\rm d}x = \int_{\R^6} \phi_1 \phi_2 \phi_3 {\rm d}x.
\]
The special complex coupling mechanism $ \Re(\overline{u^1} \overline{u^2} u^3) $ in the above reminds us that the control of $ \vert V(\b{\rm u}) \vert $ can be completed by constructing the Arithmetic-Ceometric Mean-Value inequality, see the proof of Lemma \ref{Ck} for more details.

In Section \ref{GWPS}, we complete the proof of Theorem \ref{Sb} via concentration compactness method and rigidity argument.
An important concept is almost periodic solution written in Definition \ref{almost periodic}, whose existence is the key point of contradiction in the proof of scattering.
After showing the existence of critical solution which is also almost periodicity modulo symmetries in Subsection \ref{eocs}, we then ruled out all three possibilities of it in the later subsections.
In particular, the proof of the negative regularity of global almost periodic solution $ \b{\rm u}_c $ takes up an entire part in Subsection \ref{nr}, which is very powerful in the exclusion of soliton-type and low-to-high frequency cascade solutions of \eqref{NLS system}.

For the sake of completeness of this paper, we show another derivation of model \eqref{NLS system} from the view of nonlinear optics in Appendix \ref{AD}.
We collect and show the blow-up result about the \eqref{NLS system} in Appendix \ref{BU}.

\section{Preliminaries}\label{PRE}
We conclude the introduction by giving some notations which
will be used throughout this paper. We always use $X\lesssim Y$ to denote $X\leq CY$ for some constant $C>0$.
Similarly, $X\lesssim_{u} Y$ indicates that there exists a constant $C:=C(u)$ depending on $u$ such that $X\leq C(u)Y$.
We also use the notation $X\sim Y$ to denote $X \lesssim Y \lesssim X$.

\subsection{Basic harmonic analysis}
Let $ \psi(\xi) $ be a radial smooth function supported in the ball $ \{ \xi \in \R^6 : \vert \xi \vert \le \frac{11}{10} \} $ and equal to 1 on the ball $ \{ \xi \in \R^6 : \vert \xi \vert \le 1 \} $.
For each number $ N > 0 $, we define the Fourier multipliers

\begin{equation*}
\begin{aligned}
\widehat{P_{\le N} \b{\rm g}} (\xi) & ~ := \psi \left( \frac{\xi}{N} \right) \hat{\b{\rm g}}(\xi), \\
\widehat{P_{> N} \b{\rm g}} (\xi) & ~ := \left( 1 - \psi \left( \frac{\xi}{N} \right) \right) \hat{\b{\rm g}}(\xi), \\
\widehat{P_N \b{\rm g}} (\xi) & ~ := \left( \psi \left( \frac{\xi}{N} \right) - \psi \left( \frac{2\xi}{N} \right) \right) \hat{\b{\rm g}}(\xi),
\end{aligned}
\end{equation*}
and similarly $ P_{< N} $ and $ P_{\ge N} $. We also define
\[
P_{M < \cdot \le N} := P_{\le N} - P_{\le M} = \sum_{M < N^\prime \le N} P_{N^\prime}
\]
whenever $ M < N $.
We usually use this multipliers when $M$ and $N$ are dyadic numbers. It is worth saying that all summations over $M$ or $N$ are understood to be over dyadic numbers. Like all Fourier multipliers, the Littlewood--Paley operators commute with the Sch\"odinger flow $\verb"S"(t)$.

As some applications of the Littlewood--Paley theory, we have the following lemma.

\begin{lemma}[Bernstein inequalities]\label{Bern}
For any $ 1 \le p \le q \le \infty $ and $ s \ge 0 $,
\begin{equation*}
\begin{aligned}
\Vert P_{\ge N} \b{\rm g} \Vert_{{\rm L}_x^p(\R^6)} \lesssim & ~ N^{-s} \Vert \vert \nabla \vert^s P_{\ge N} \b{\rm g} \Vert_{{\rm L}_x^p(\R^6)} \lesssim N^{-s} \Vert \Vert \nabla \vert^s \b{\rm g} \Vert_{{\rm L}_x^p(\R^6)}, \\
\Vert \vert \nabla \vert^s P_{\le N} \b{\rm g} \Vert_{{\rm L}_x^p(\R^6)} \lesssim & ~ N^s \Vert P_{\le N} \b{\rm g} \Vert_{{\rm L}_x^p(\R^6)} \lesssim N^s \Vert \b{\rm g} \Vert_{{\rm L}_x^p(\R^6)}, \\
\Vert \vert \nabla \vert^{\pm s} P_N \b{\rm g} \Vert_{{\rm L}_x^p(\R^6)} \lesssim & ~ N^{\pm s} \Vert P_N \b{\rm g} \Vert_{{\rm L}_x^p(\R^6)} \lesssim N^{\pm s} \Vert \b{\rm g} \Vert_{{\rm L}_x^p(\R^6)} ,\\
\Vert P_N \b{\rm g} \Vert_{{\rm L}_x^q(\R^6)} \lesssim & ~ N^{\frac{6}{p} - \frac{6}{q}} \Vert P_N \b{\rm g} \Vert_{{\rm L}_x^p(\R^6)},
\end{aligned}
\end{equation*}
where $ \vert \nabla \vert^s $ is the classical fractional-order operator.
\end{lemma}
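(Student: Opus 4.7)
Since all three Littlewood--Paley operators $P_{\le N}$, $P_{> N}$, $P_N$ act componentwise on the vector $\mathbf{g} = (g^1, g^2, g^3)^T$, and the norm on $\mathrm{L}^p$ is comparable to $\sum_i \|g^i\|_{L^p}$, the vector-valued statements follow immediately from the scalar Bernstein inequalities on $\mathbb{R}^6$. So the plan is to prove the scalar versions and then apply them componentwise.

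For the scalar case, the key idea is to realize each Littlewood--Paley projection as convolution with an $L^1$-normalized Schwartz kernel and exploit its scaling. Writing $P_N g = K_N \ast g$ where $K_N(x) = N^6 K(Nx)$ for a fixed Schwartz function $K$ (whose Fourier transform is $\psi(\xi) - \psi(2\xi)$, smooth and compactly supported away from the origin), Young's convolution inequality with $1 + 1/q = 1/p + 1/r$ yields
\[
\|P_N g\|_{L^q(\mathbb{R}^6)} \le \|K_N\|_{L^r(\mathbb{R}^6)} \|g\|_{L^p(\mathbb{R}^6)}.
\]
A change of variables gives $\|K_N\|_{L^r} = N^{6(1-1/r)} \|K\|_{L^r} \sim N^{6/p - 6/q}$, which proves the last (and most substantial) inequality. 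The analogous representation $P_{\le N} g = \check\psi_N \ast g$ with $\check\psi_N(x) = N^6 \check\psi(Nx)$ handles the low-frequency projection, so in particular $\|P_N g\|_{L^p} \lesssim \|g\|_{L^p}$ for any $p \in [1,\infty]$ by taking $r = 1$.

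For the fractional-derivative estimates, I would note that $|\nabla|^{\pm s} P_N$ is convolution with a kernel of the form $N^{\pm s}\, \tilde K_N$, where $\tilde K_N(x) = N^6 \tilde K(Nx)$ and $\tilde K$ is Schwartz because its Fourier transform $|\xi|^{\pm s}(\psi(\xi) - \psi(2\xi))$ is smooth, compactly supported, and bounded away from the origin on the annular support of $\psi(\xi) - \psi(2\xi)$. Young's inequality with $r = 1$ then gives $\||\nabla|^{\pm s} P_N g\|_{L^p} \lesssim N^{\pm s}\|P_N g\|_{L^p} \lesssim N^{\pm s}\|g\|_{L^p}$. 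For $|\nabla|^s P_{\le N}$, the same argument works with $\psi$ in place of $\psi - \psi(2\cdot)$, since $|\xi|^s \psi(\xi)$ is still a compactly supported smooth function (for $s \ge 0$).

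The remaining inequality $\|P_{\ge N} g\|_{L^p} \lesssim N^{-s}\||\nabla|^s P_{\ge N} g\|_{L^p}$ is the only one requiring a small extra step: writing $P_{\ge N} g = \sum_{M \ge N} P_M g$ and applying the previous estimate to each frequency block, I would recover $g$ from $|\nabla|^s g$ via the multiplier $|\xi|^{-s}(1 - \psi(\xi/N))$, whose rescaled kernel has $L^1$ norm controlled by $N^{-s}$; this can be seen by a single Littlewood--Paley block analysis or directly by noting that $|\xi|^{-s}$ localized to $|\xi| \gtrsim N$ produces a kernel of scale $N^{-s}$. The main technical annoyance here is simply bookkeeping the dyadic sum, which is routine since the kernel decays in $N$. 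There is no deep obstacle; the lemma is standard and the proof is entirely in the kernel estimates just described.
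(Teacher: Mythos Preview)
The paper does not prove this lemma; it is stated as a standard consequence of Littlewood--Paley theory without argument. Your kernel-and-Young's-inequality approach is exactly the textbook proof, and it is correct in outline. One small inaccuracy: the multiplier $|\xi|^s \psi(\xi)$ is \emph{not} smooth at the origin for general $s\ge 0$ (only for even integers $s$), so it is not literally the Fourier transform of a Schwartz function. This is easily repaired by the same dyadic decomposition you already invoke for $P_{\ge N}$: write $P_{\le N}=\sum_{M\le N}P_M$, apply your annular estimate $\||\nabla|^s P_M g\|_{L^p}\lesssim M^s\|g\|_{L^p}$ to each piece, and sum the geometric series in $M\le N$. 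With that fix the argument is complete.
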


\begin{lemma}[Fractional chain rule, \cite{Christ1991}] \label{cr}
	Suppose $G \in C^1\left(\C\right)$, $s\in\left(0,1\right]$, and $1<p, p_1, p_2< \infty$ are such that $\frac{1}{p}= \frac{1}{p_1} + \frac{1}{p_2}$. Then,
	\[\
	\left\Vert \vert\nabla\vert^s G(u) \right\Vert_{L_x^p}  \lesssim \left \Vert G^{\prime}(u) \right \Vert_{L_x^{p_1}} \left \Vert \vert \nabla \vert ^s u\right\Vert _{L_x^{p_2}}
	\]
\end{lemma}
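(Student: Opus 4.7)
The plan follows the Littlewood-Paley decomposition strategy of Christ--Weinstein. Since $\||\nabla|^s G(u)\|_{L^p}$ is insensitive to additive constants, one may assume $G(0)=0$. Writing $u_{\le N}:=P_{\le N}u$ for dyadic $N$ and telescoping along the Littlewood-Paley resolution of $u$,
\[
G(u)\;=\;\sum_{N}\bigl(G(u_{\le N})-G(u_{\le N/2})\bigr)\;=\;\sum_{N}(P_N u)\,H_N,\qquad H_N:=\int_0^1 G'\bigl(u_{\le N/2}+tP_N u\bigr)\,dt,
\]
where I applied the fundamental theorem of calculus on each dyadic shell. This is a paraproduct-like decomposition: the factor $P_N u$ is sharply frequency-localized at scale $N$, while $H_N$ is a convex average of $G'$ evaluated on mollifications of $u$, hence a ``slow'' factor depending nonlinearly on $u$ but only mildly on $N$.

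Next I would apply $|\nabla|^s$ to each summand and use the square-function characterization
\[
\bigl\||\nabla|^s F\bigr\|_{L^p}\;\sim\;\Bigl\|\Bigl(\sum_{N}N^{2s}|P_N F|^2\Bigr)^{1/2}\Bigr\|_{L^p}\qquad(1<p<\infty),
\]
together with Bernstein (Lemma \ref{Bern}) to gain $N^s$ from each $P_N u$. If one can establish the pointwise maximal bound $|H_N(x)|\lesssim \mathcal{M}(G'(u))(x)$, uniformly in $N$ and $t\in[0,1]$, then a Cauchy-Schwarz in $N$, the vector-valued Fefferman-Stein inequality, and Hölder with the splitting $\tfrac{1}{p}=\tfrac{1}{p_1}+\tfrac{1}{p_2}$ combine to yield
\[
\bigl\||\nabla|^s G(u)\bigr\|_{L^p}\;\lesssim\;\|\mathcal{M}(G'(u))\|_{L^{p_1}}\,\bigl\||\nabla|^s u\bigr\|_{L^{p_2}}\;\lesssim\;\|G'(u)\|_{L^{p_1}}\,\bigl\||\nabla|^s u\bigr\|_{L^{p_2}},
\]
with the final step using $p_1>1$.

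The main obstacle, and the technical core of the argument, is precisely the pointwise maximal estimate $|H_N(x)|\lesssim \mathcal{M}(G'(u))(x)$. Heuristically $u_{\le N/2}+tP_N u$ is $u$ smoothed at scale $1/N$, so one would like to commute the composition with $G'$ past the mollifier; this is not automatic for a general $C^1$ nonlinearity $G$ and requires a multilinear maximal-type argument. A pragmatic route is to first verify the bound for power-type $G(z)=|z|^{p-1}z$ (which covers the quadratic nonlinearities appearing in \eqref{NLS system}) via a direct Taylor expansion and dyadic summation, and then invoke \cite{Christ1991} for the general $C^1$ statement. The remaining ingredients, namely the square-function characterization, Bernstein, Fefferman-Stein, and Hölder, are classical and structurally independent of $G$.
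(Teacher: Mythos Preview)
The paper does not supply a proof of this lemma; it is quoted as a known result with a citation to Christ--Weinstein \cite{Christ1991} and used as a black box (in step (4) of the proof of Theorem~\ref{negative regularity}). So there is no ``paper's own proof'' to compare against.

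Your outline is essentially the standard Littlewood--Paley/paraproduct route to the Christ--Weinstein inequality, and in that sense it matches the cited source in spirit. One remark: you correctly flag the pointwise maximal bound $|H_N(x)|\lesssim\mathcal{M}(G'(u))(x)$ as the crux, but your proposed fallback of ``invoking \cite{Christ1991} for the general $C^1$ statement'' is circular, since that is exactly the lemma under discussion. The honest resolution is either to restrict to the case where $G'$ is Lipschitz (or H\"older), in which case the maximal bound follows directly from the mean value inequality together with the convolution structure of $P_{\le N}$, or to carry out the full commutator/paraproduct argument as in the original reference (see also Taylor, \emph{Tools for PDE}, Proposition~5.1 in Chapter~2, or Kato--Ponce). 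For the quadratic nonlinearities actually appearing in \eqref{NLS system} this subtlety is moot, since $G'$ is linear and the maximal bound is trivial.
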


\subsection{Local well-posedness}
Let $ \Lambda_s $ be the set of pairs $ (p, q) $ with $ q \geqslant 2 $ and satisfying
\begin{equation}
\frac{2}{q} = 6 \left( \frac{1}{2} - \frac{1}{r} \right) - s.
\label{Lambda}
\end{equation}
Define
\begin{equation*}
\Vert \b{\rm u} \Vert_{\mathcal{S}({\rm\dot H}^s(\mathbb{R}^6))}:= \sup_{(q, r) \in \Lambda_{s}} \Vert \b{\rm u} \Vert_{{\rm L}_t^q(\mathbb{R}, {\rm L}_x^r(\mathbb{R}^6))},
\end{equation*}
and
\begin{equation*}
\Vert \b{\rm u} \Vert_{\mathcal{S}^\prime({\rm\dot H}^s(\mathbb{R}^6))}:= \inf_{(q, r) \in \Lambda_{s}} \Vert \b{\rm u} \Vert_{{\rm L}_t^{q^\prime}(\mathbb{R}, {\rm L}_x^{r^\prime}(\mathbb{R}^6))}.
\end{equation*}
We extent our notation $ \mathcal{S}({\rm\dot H}^s(\mathbb{R}^6)), \mathcal{S}^\prime({\rm\dot H}^s(\mathbb{R}^6)) $ as follows: we write $ \mathcal{S}(I, {\rm\dot H}^s(\mathbb{R}^6)) $ or $ \mathcal{S}^\prime(I, {\rm\dot H}^s(\mathbb{R}^6)) $ to indicate a restriction to a time subinterval $ I \subset \mathbb{R} $.

\begin{lemma}[Dispersive estimate]\label{disper}
	For any $ t \ne 0 $ and $ r \ge 2 $, we have
	\begin{equation*}
	\begin{aligned}
	\Vert \verb"S"(t) \b{\rm g} \Vert_{{\rm L}_x^\infty(\R^6)} \lesssim_ {\kappa_1 \kappa_2 \kappa_3} & ~  \vert t \vert^{-3} \Vert \b{\rm g} \Vert_{{\rm L_x}^1(\R^6)},
\end{aligned}
\end{equation*}
	where $ r^\prime $ is the H\"older conjugation index, that is, $ \frac{1}{r} + \frac{1}{r^\prime} = 1 $.
\end{lemma}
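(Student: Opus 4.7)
The plan is to reduce the vector-valued estimate to the classical scalar dispersive estimate for the free Schr\"odinger propagator $e^{it\Delta}$ in dimension $d=6$, applied componentwise, since $\mathtt{S}(t)$ is diagonal. Explicitly, for a scalar datum $g\in\mathcal S(\mathbb R^6)$ and $\kappa>0$,
\begin{equation*}
e^{i\kappa t\Delta}g(x) \;=\; \bigl(4\pi i\kappa t\bigr)^{-3}\int_{\mathbb R^6} e^{i|x-y|^2/(4\kappa t)}\, g(y)\,\mathrm dy,
\end{equation*}
which follows from writing $e^{i\kappa t\Delta}$ as a Fourier multiplier and computing the Gaussian integral, extended to $L^1$ by density. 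Taking absolute values under the integral immediately yields the pointwise bound $\|e^{i\kappa t\Delta}g\|_{L^\infty_x(\mathbb R^6)}\lesssim (\kappa|t|)^{-3}\|g\|_{L^1(\mathbb R^6)}$.

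Now I would apply this estimate to each of the three components of $\mathtt{S}(t)\mathbf g=(e^{i\kappa_1 t\Delta}g^1,\,e^{i\kappa_2 t\Delta}g^2,\,e^{i\kappa_3 t\Delta}g^3)^{T}$, with the scalar propagator evaluated at $\kappa_j$, $j=1,2,3$. Each component obeys
\begin{equation*}
\|e^{i\kappa_j t\Delta}g^j\|_{L^\infty_x(\mathbb R^6)} \;\lesssim\; (\kappa_j|t|)^{-3}\,\|g^j\|_{L^1_x(\mathbb R^6)}.
\end{equation*}
Summing the three component bounds (or taking the maximum, depending on the convention used for the vector $L^\infty$ and $L^1$ norms) and absorbing the $\kappa_j$-factors into a constant $C_\kappa$ depending on $\min_j\kappa_j$ yields
\begin{equation*}
\|\mathtt{S}(t)\mathbf g\|_{\mathrm L^\infty_x(\mathbb R^6)} \;\le\; C_\kappa\,|t|^{-3}\,\|\mathbf g\|_{\mathrm L^1(\mathbb R^6)},
\end{equation*}
which is the claimed estimate.

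There is essentially no hard step here: the result is a direct consequence of the classical scalar dispersive estimate and the diagonal structure of $A$, so the only real content is keeping track of the constant depending on $\boldsymbol\kappa=(\kappa_1,\kappa_2,\kappa_3)$. If the authors actually intend the more general $\mathrm L^{r'}\to\mathrm L^r$ estimate suggested by the hypothesis $r\ge 2$, one obtains it for free by Riesz--Thorin interpolation against the trivial unitary bound $\|\mathtt S(t)\mathbf g\|_{\mathrm L^2}=\|\mathbf g\|_{\mathrm L^2}$, producing the rate $|t|^{-3(1-2/r)}$; this is the route I would take to cover the full range in one line. Throughout, the mass-resonance condition \eqref{mr} plays no role, since the estimate is purely linear and coordinatewise.
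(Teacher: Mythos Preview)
Your proof is correct and is exactly the standard argument. The paper itself does not supply a proof of this lemma; it simply states the dispersive estimate and moves on to derive the Strichartz estimates via the $TT^\star$ argument and \cite{Keel1998}. Your componentwise reduction to the scalar kernel formula for $e^{i\kappa_j t\Delta}$ in $\mathbb{R}^6$, followed by the trivial $L^1\to L^\infty$ bound and summation over $j$, is precisely what the authors are implicitly invoking, and your remark about recovering the full $L^{r'}\to L^r$ range by interpolation correctly accounts for the otherwise unused hypothesis $r\ge 2$.
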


We can use $ TT^\star $ argument and combine the endpoint case \cite{Keel1998} to obtain the following Strichartz estimates.
\begin{theorem}[Strichartz estimates, \cite{Meng2021}]\label{strichartz}
	The solution $ \b{\rm u} $ to \eqref{NLS system} on an interval $ I \ni t_{0} $ obeys
	\begin{equation}
	\Vert \b{\rm u} \Vert_{\mathcal{S}(I, {\rm L}^2(\mathbb{R}^6))} \lesssim \left( \Vert \b{\rm u}(t_{0}) \Vert_{{\rm L}^2(\mathbb{R}^6)} + \Vert \b{\rm f}(\b{\rm u}) \Vert_{\mathcal{S}^\prime(I, {\rm L}^2(\mathbb{R}^6))} \right).
	\label{Strichartz}
	\end{equation}
\end{theorem}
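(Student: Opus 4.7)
The plan is to reduce the vector-valued estimate to the scalar Strichartz inequality for the standard Schr\"odinger propagator on $\R^6$ and then to recombine the three components.

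First, I would exploit the diagonal structure of $A$: the semigroup $\verb"S"(t)$ acts component-wise as the scalar propagators $e^{i\kappa_j t\Delta}$. By Duhamel's formula each component $u^i$ satisfies
\begin{equation*}
u^i(t) = e^{i\kappa_i(t-t_0)\Delta}u^i(t_0) + i\int_{t_0}^t e^{i\kappa_i(t-\tau)\Delta}f^i(\b{\rm u}(\tau))\,{\rm d}\tau,
\end{equation*}
where $f^i$ denotes the $i$-th component of $\b{\rm f}$. Since $\kappa_i>0$, the time rescaling $\tilde t:=\kappa_i t$ converts $e^{i\kappa_i t\Delta}$ into the standard propagator $e^{i\tilde t\Delta}$, which is an $L^2$ isometry and which satisfies the dispersive bound of Lemma \ref{disper} with a constant depending only on $\kappa_i$.

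Next, I would apply the Keel--Tao machinery to each of the rescaled scalar equations. The $L^2$ conservation together with the dispersive estimate are exactly the two hypotheses required for the $TT^\ast$ argument, so for every admissible pair $(q,r)\in\Lambda_0$ one obtains
\begin{equation*}
\Vert e^{i\kappa_i t\Delta}u^i(t_0)\Vert_{L_t^q L_x^r} \lesssim \Vert u^i(t_0)\Vert_{L^2},
\end{equation*}
and correspondingly for the inhomogeneous term
\begin{equation*}
\Bigl\Vert \int_{t_0}^t e^{i\kappa_i(t-\tau)\Delta}f^i(\b{\rm u}(\tau))\,{\rm d}\tau\Bigr\Vert_{L_t^q L_x^r} \lesssim \Vert f^i(\b{\rm u})\Vert_{L_t^{\tilde q'} L_x^{\tilde r'}}
\end{equation*}
for any dual admissible pair $(\tilde q,\tilde r)\in\Lambda_0$ on the right. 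Summing over $i=1,2,3$, then taking the supremum over $(q,r)\in\Lambda_0$ on the left and the infimum over $(\tilde q,\tilde r)\in\Lambda_0$ on the right, yields \eqref{Strichartz}.

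The main obstacle is the endpoint pair $(q,r)=(2,3)$ in $\Lambda_0$, which is not accessible from $TT^\ast$ combined with interpolation against the trivial $(\infty,2)$ endpoint alone. This case relies on the bilinear argument of Keel--Tao, whose applicability is ensured here because each scalar propagator $e^{i\kappa_i t\Delta}$ obeys the full dispersive decay $|t|^{-3}=|t|^{-d/2}$ with the sharp exponent. All remaining steps -- the time rescaling, interpolation between the endpoints, the Christ--Kiselev passage from bilinear to retarded-integral form, and the final component sum -- are routine and produce the claimed bound.
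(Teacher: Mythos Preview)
Your proposal is correct and follows exactly the route the paper indicates: the paper does not give a written proof but simply states that one ``can use $TT^\star$ argument and combine the endpoint case \cite{Keel1998}'', and your component-wise reduction via the diagonal structure of $\verb"S"(t)$, time-rescaling to the standard propagator, and invocation of the Keel--Tao machinery is precisely that. One small remark: the Christ--Kiselev lemma does not cover the double-endpoint retarded estimate (when both $(q,r)$ and $(\tilde q,\tilde r)$ equal $(2,3)$), but this is already handled directly inside the Keel--Tao bilinear argument you cite, so the overall argument stands.
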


Similar to the classical case, we can also establish the local well-posedness for \eqref{NLS system} by fixed point method. For the energy-critical circumstance, it's worth noting that the lifespan of the solution depends not only on the size of the ${{\dot{\rm H}}^1(\R^6)}$ norm, but also on the profiles of the initial date.

\begin{theorem}(Local well-posedness, \cite{Cazenave2003}) \label{sd}
Fixed $ \b{\rm u}_0 \in {\rm\dot H}^1 (\mathbb{R}^6) $, there exists a unique maximal-lifespan solution $ \b{\rm u} : I \times \R^6 \to \C^3 $ to \eqref{NLS system} with initial data $\b{\rm u}(0)=\b{\rm u}_0$ satisfying the following properties:

\begin{enumerate}
	\item $ 0 \in I $ is an open interval.
	\item If $ \sup I $ is finite, then $ \b{\rm u} $ blows-up forward in time. Similarly, if $ \inf I $ is finite, then $ \b{\rm u} $ blows-up backward in time.
	\item If $ \sup I = +\infty $ and $ \b{\rm u} $ does not blow-up forward in time, then $ \b{\rm u} $ scatters forward in time.
	Conversely, given $ \b{\rm u}_+ \in {\rm\dot H}^1(\R^6) $, there is a unique solution $ \b{\rm u}(t) $ to \eqref{NLS system} in a neighborhood of $ t = \infty $ such that
	\begin{equation*}
	\lim_{t \to +\infty} \left\Vert \b{\rm u}(t) - \verb"S"(t) \b{\rm u}_+ \right\Vert_{{\rm\dot H}^1(\mathbb{R}^6)} = 0.
	\end{equation*}
	Analogous statements hold backward in time.
	\item There exists a small number $ \delta_{sd} > 0 $ satisfying that
	if $ \| \b{\rm u}_0 \|_{\mathcal{S}(\mathbb{R}, {\rm\dot H}^1(\mathbb{R}^6))} \le \delta_{sd} $, then $ \b{\rm u} $ is global and scatters with $ S_{\R}(\b{\rm u}) \lesssim \delta_{sd}^4$.
\end{enumerate}
\end{theorem}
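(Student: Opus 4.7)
The plan is a standard contraction mapping on Duhamel's formula
\[
\Phi(\b{\rm v})(t) := \verb"S"(t)\b{\rm u}_0 + i\int_0^t \verb"S"(t-\tau)\b{\rm f}(\b{\rm v}(\tau))\,{\rm d}\tau,
\]
carried out in a small closed ball of the work space $\{\b{\rm v} \in C_t(I,{\rm\dot H}^1) \cap {\rm L}^4_{t,x}(I\times\R^6) : |\nabla|\b{\rm v} \in \mathcal{S}(I,{\rm L}^2)\}$. Note that $(4,4)\in\Lambda_1$ since $\tfrac{2}{4}+\tfrac{6}{4}=2$, so the ${\rm\dot H}^1$-Strichartz estimate yields $\|\verb"S"(t)\b{\rm u}_0\|_{{\rm L}^4_{t,x}(\R\times\R^6)} \lesssim \|\b{\rm u}_0\|_{{\rm\dot H}^1}$, a quantity that becomes small on any sufficiently short interval $I\ni 0$ by absolute continuity of the integral.

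The crucial nonlinear input is a bilinear estimate of the type
\[
\||\nabla|\b{\rm f}(\b{\rm v})\|_{\mathcal{S}'(I,{\rm L}^2)} \lesssim \|\b{\rm v}\|_{{\rm L}^4_{t,x}(I)}\,\||\nabla|\b{\rm v}\|_{\mathcal{S}(I,{\rm L}^2)},
\]
obtained from H\"older in spacetime together with the fractional Leibniz rule of Lemma~\ref{cr}, using that each entry of $\b{\rm f}(\b{\rm v})$ is quadratic in the components of $\b{\rm v}$ and that complex conjugation commutes with $|\nabla|$ in $L^p$-norms. A parallel bilinear bound on the difference $\b{\rm f}(\b{\rm v})-\b{\rm f}(\b{\rm w})$, combined with Theorem~\ref{strichartz} applied to $|\nabla|\Phi(\b{\rm v})$, makes $\Phi$ a contraction on a small ball whenever $\|\verb"S"(\cdot)\b{\rm u}_0\|_{{\rm L}^4_{t,x}(I)}$ is sufficiently small relative to $\|\b{\rm u}_0\|_{{\rm\dot H}^1}$. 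This yields existence, uniqueness, and continuous dependence on a short open interval containing $0$.

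Items (1)--(2) follow from the standard extension argument: one takes $I$ to be the union of all local intervals of existence, and if $\sup I<+\infty$ with $S_{[0,\sup I)}(\b{\rm u})<\infty$ then reapplying the fixed point at a time $t_0$ near $\sup I$ with data $\b{\rm u}(t_0)$ would extend past $\sup I$, contradicting maximality. For (3), showing that $\verb"S"(-t)\b{\rm u}(t)$ is Cauchy in ${\rm\dot H}^1$ as $t\to+\infty$ uses the same nonlinear bound on the Duhamel tail, while the wave-operator direction is a mirror fixed point on $[T,\infty)$ exploiting $\|\verb"S"(\cdot)\b{\rm u}_+\|_{{\rm L}^4_{t,x}([T,\infty))}\to 0$ as $T\to+\infty$. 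Item (4) is then immediate: the global smallness hypothesis lets the fixed point run on all of $\R$ at once, and the bound $S_{\R}(\b{\rm u}) \lesssim \delta_{sc}^4$ is a by-product of the contraction estimates. The principal technical obstacle throughout is the bilinear estimate on $|\nabla|\b{\rm f}(\b{\rm v})$, which must be balanced between a dual admissible Strichartz norm and the work-space norms; this is the 6D energy-critical analogue of the nonlinear estimates carried out in \cite{Meng2021,Gao2021}.
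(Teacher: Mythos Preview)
Your proposal is correct and matches the paper's approach: the paper does not give a proof of this theorem but simply cites \cite{Cazenave2003} and remarks that local well-posedness is established ``by fixed point method,'' which is exactly the contraction-mapping scheme you outline. One minor quibble: you invoke Lemma~\ref{cr} (the fractional \emph{chain} rule) for the bilinear estimate, but since each component of $\b{\rm f}(\b{\rm v})$ is a genuine product of two components of $\b{\rm v}$, the ordinary Leibniz rule for $\nabla$ (together with H\"older with the $L^2$-admissible pairs $(2,3)$ and $(4,\tfrac{12}{5})$) already suffices---no fractional product/chain rule is needed here.
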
	

\begin{corollary}[Existence of nonlinear profile]\label{ENP}
There exists at least one nonlinear profile associated to $ (\b{\rm v}_0, \{ t_n \}_{n=1}^{\infty}) $.
\end{corollary}

\begin{proof}
There are two cases to consider, either $ \vert t_{\infty} \vert < \infty $ or $ \vert t_{\infty} \vert = \infty $.

{\bf Case 1}. If $ \vert t_{\infty} \vert < \infty $, set $ \b{\rm u}_0 = \b{\rm v}(t_{\infty}, x) = \verb"S"(t_{\infty}) \b{\rm v}_0 $ and solve the initial problem on any time interval $ I \ni t_{\infty} $ with $ \b{\rm u} \vert_{t = t_{\infty}} = \b{\rm u}_0 $, then
\begin{equation*}
\begin{aligned}
& ~ \lim_{n \to \infty} \Vert \b{\rm u}(t_n, \cdot) - \b{\rm v}(t_n, \cdot) \Vert_{{\rm\dot H}^1(\R^6)} \\
\le & ~ \lim_{n \to \infty} \Vert \verb"S"(t_n - t_{\infty}) \b{\rm u}_0 - \verb"S"(t_n) \b{\rm v}_0 \Vert_{{\rm\dot H}^1(\R^6)} + \lim_{n \to \infty} \left\Vert \int_{t_{\infty}}^{t_n} \verb"S"(t_n - s) \b{\rm f}(\b{\rm u}(s)) {\rm d}s \right\Vert_{{\rm\dot H}^1(\R^6)} \\
= & ~ \lim_{n \to \infty} \Vert \verb"S"(t_n) \b{\rm v}_0 - \verb"S"(t_n) \b{\rm v}_0 \Vert_{{\rm\dot H}^1(\R^6)} + \lim_{n \to \infty} \left\Vert \int_{t_{\infty}}^{t_n} \verb"S"(t_n - s) \b{\rm f}(\b{\rm u}(s)) {\rm d}s \right\Vert_{{\rm\dot H}^1(\R^6)} \\
= & ~ 0.
\end{aligned}
\end{equation*}

{\bf Case 2}. If $ \vert t_{\infty} \vert = \infty $, W. L. O. G., we can assume $ t_{\infty} = +\infty $ and there exists $ N \gg 1 $ such that
\[
\Vert \verb"S"(t) \b{\rm v}_0 \Vert_{\mathcal{S}([t_n, +\infty), {\rm\dot H}^1(\R^6))} \le \delta_{sd}, \qquad \forall ~ n \ge N,
\]
where $ \delta_{sd} $ is from Theorem \ref{sd}.
Solve the integral equation on time interval $ [t_n, +\infty) $
\[
\b{\rm u}(t) = \verb"S"(t) \b{\rm v}_0 + i \int_{t}^{+\infty} \verb"S"(t-s) \b{\rm f}(\b{\rm u}(s)) {\rm d}s,
\]
and one can obtain a solution $ \b{\rm u} $ to \eqref{NLS system} with terminal data $ \b{\rm v}_0 $.
Then
\begin{equation*}
\begin{aligned}
\lim_{n \to \infty} \Vert \b{\rm u}(t_n, \cdot) - \b{\rm v}(t_n, \cdot) \Vert_{{\rm\dot H}^1(\R^6)}
\le & ~  \lim_{n \to \infty} \left\Vert \int_{t_n}^{+\infty} \verb"S"(t_n - s) \b{\rm f}(\b{\rm u}(s)) {\rm d}s \right\Vert_{{\rm\dot H}^1(\R^6)} \\
\le & ~ C \lim_{n \to \infty} \Vert \nabla \b{\rm f}(\b{\rm u}) \Vert_{\mathcal{S}^{\prime}((t_n, +\infty), {\rm L}^2)(\R^6)}
= 0.
\end{aligned}
\end{equation*}

Collect the two cases above, the proof is completed.
\end{proof}

\begin{corollary}[Uniqueness of nonlinear profile]\label{UNP}
There exists at most one nonlinear profile associated to $ (\b{\rm v}_0, \{ t_n \}_{n=1}^{\infty}) $.
\end{corollary}

\begin{proof}
We prove it by contradiction and suppose $ \b{\rm u}^{(1)} $ and $ \b{\rm u}^{(2)} $ are both nonlinear profile associated to $ (\b{\rm v}_0, \{ t_n \}_{n=1}^{\infty}) $.

{\bf Case 1}. If $ \vert t_{\infty} \vert < \infty $, then the uniqueness of nonlinear profile can be obtained from Theorem \ref{sd}, Duhamel's formula and
\begin{equation*}
\begin{aligned}
& ~ \lim_{n \to \infty} \Vert \b{\rm u}^{(1)}(t_n, \cdot) - \b{\rm u}^{(2)}(t_n, \cdot) \Vert_{{\rm\dot H}^1(\R^6)} \\
\le & ~ \lim_{n \to \infty} \Vert \b{\rm u}^{(1)}(t_n, \cdot) - \b{\rm v}(t_n, \cdot) \Vert_{{\rm\dot H}^1(\R^6)} + \Vert \b{\rm u}^{(1)}(t_n, \cdot) - \b{\rm v}(t_n, \cdot) \Vert_{{\rm\dot H}^1(\R^6)} = 0.
\end{aligned}
\end{equation*}

{\bf Case 2}. If $ \vert t_{\infty} \vert = \infty $, W. L. O. G., we can assume $ t_{\infty} = +\infty $.
The fact that $ \Vert \nabla \b{\rm u}^{(i)} \Vert_{S(I, {\rm L}^2(\R^6))} \le \infty, i = 1, 2 $ implies that for any $ \varepsilon > 0 $, there exists $ N = N(\varepsilon) > 0 $ such that
\[
\Vert \nabla \b{\rm u}^{(i)} \Vert_{S((t_n, +\infty), {\rm L}^2(\R^6))} < \varepsilon, \qquad \forall ~ n \ge N.
\]
By theorem \ref{sd}, we have, for $ M \gg N $
\[
\sup_{t \in (t_N, t_M)} \Vert \nabla \b{\rm u}^{(1)}(t, \cdot) - \nabla \b{\rm u}^{(2)}(t, \cdot) \Vert_{{\rm L}^2(\R^6)} \le C \Vert \nabla \b{\rm u}^{(1)}(t_M, \cdot) - \nabla \b{\rm u}^{(2)}(t_M, \cdot) \Vert_{{\rm L}^2(\R^6)},
\]
which means that $ \b{\rm u}^{(1)} \equiv \b{\rm u}^{(2)} $ on time interval $ (t_N, +\infty) $ and thus on $ I $.
\end{proof}

We can establish the following perturbation theorem by similar argument.
For more details, one may refer to \cite{Gao2021, Tao2005}.	

\begin{proposition}[Stability, \cite{Gao2021}]\label{pt}
Given $ M \gg 1 $, there exist $ \varepsilon = \varepsilon (M) \ll 1 $ and $ L = L (M) \gg 1 $ such that the following holds.
Let $ \b{\rm u} = \b{\rm u}(t, x) \in {\rm\dot H}^1(\mathbb{R}^6) $ for all $ t $ and solve
\begin{equation*}
i \partial_{t} \b{\rm u} + A \b{\rm u} =  \b{\rm f}(\b{\rm u}) .
\end{equation*}
Let $ \tilde{\b{\rm u}} = \tilde{\b{\rm u}}(t, x) \in {\rm\dot H}_x^1(\mathbb{R}^6) $ for all $ t $ and define
\begin{equation*}
\b{\rm e} := i\partial_{t} \tilde{\b{\rm u}} + A \tilde{\b{\rm u}} - \b{\rm f}(\tilde{\b{\rm u}}).
\end{equation*}
If
\begin{equation*}
\begin{aligned}
& \Vert \tilde{\b{\rm u}} \Vert_{\mathcal{S}({\rm\dot H}^1(\mathbb{R}^6))} \le M, \qquad
\Vert \nabla \b{\rm e} \Vert_{\mathcal{S}^\prime({\rm L}^2(\mathbb{R}^6))} \le \varepsilon, \\
& \Vert \verb"S"(t-t_0) ( \b{\rm u}_0 - \tilde{\b{\rm u}}(t_0) ) \Vert_{\mathcal{S}({\rm\dot H}^1(\mathbb{R}^6))} \le \varepsilon,
\end{aligned}
\end{equation*}
then
\begin{equation*}
\Vert \b{\rm u} \Vert_{\mathcal{S}({\rm\dot H}^1(\mathbb{R}^6))} \le L.
\end{equation*}
\end{proposition}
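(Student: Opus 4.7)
The plan is to execute the standard long-time perturbation argument for energy-critical Schr\"odinger equations (compare \cite{Tao2005, Gao2021}), adapted to the bilinear three-wave system. Set $\b{\rm w} := \b{\rm u} - \tilde{\b{\rm u}}$; then
\begin{equation*}
i \pa_t \b{\rm w} + A \b{\rm w} = \b{\rm f}(\b{\rm u}) - \b{\rm f}(\tilde{\b{\rm u}}) - \b{\rm e}, \qquad \b{\rm w}(t_0) = \b{\rm u}_0 - \tilde{\b{\rm u}}(t_0).
\end{equation*}
Because $\b{\rm f}$ is bilinear in the components of its argument, the difference $\b{\rm f}(\b{\rm u}) - \b{\rm f}(\tilde{\b{\rm u}})$ is schematically a sum of terms of the form $\tilde{\b{\rm u}} \cdot \b{\rm w}$ and $\b{\rm w} \cdot \b{\rm w}$, and $\nabla[\b{\rm f}(\b{\rm u}) - \b{\rm f}(\tilde{\b{\rm u}})]$ expands by the Leibniz rule into the same kind of products with one gradient on one factor---no fractional chain rule (Lemma \ref{cr}) is needed. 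Using $\|\tilde{\b{\rm u}}\|_{\mathcal{S}({\rm\dot H}^1(\R^6))} \le M$, I would partition the time axis into $J = J(M, \eta)$ consecutive subintervals $I_j = [t_j, t_{j+1}]$ on which $\|\tilde{\b{\rm u}}\|_{\mathcal{S}(I_j, {\rm\dot H}^1)} \le \eta$, where $\eta > 0$ is a small absolute constant to be fixed later.

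On each $I_j$, applying the Strichartz estimate of Theorem \ref{strichartz} to $\nabla \b{\rm w}$ on $[t_j, t]$ together with H\"older in an $L^2$-admissible Strichartz pair gives
\begin{equation*}
\|\b{\rm w}\|_{\mathcal{S}(I_j, {\rm\dot H}^1)} \le C A_j + C\eta \|\b{\rm w}\|_{\mathcal{S}(I_j, {\rm\dot H}^1)} + C \|\b{\rm w}\|_{\mathcal{S}(I_j, {\rm\dot H}^1)}^2 + C\varepsilon,
\end{equation*}
where $A_j$ denotes the free-evolution quantity $\|S(t-t_j)\b{\rm w}(t_j)\|_{\mathcal{S}(I_j, {\rm\dot H}^1)}$, with $S(t)$ the linear Schr\"odinger flow. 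Fixing $\eta$ so that $C\eta \le \tfrac12$ and running a continuity argument in the subinterval length absorbs the linear and quadratic terms in $\b{\rm w}$, yielding $\|\b{\rm w}\|_{\mathcal{S}(I_j, {\rm\dot H}^1)} \le 4C (A_j + \varepsilon)$ whenever $A_j + \varepsilon$ is sufficiently small. A second Strichartz bound on the Duhamel term then propagates the free datum to the next subinterval and produces a recursion $A_{j+1} \le 2 A_j + C\varepsilon$; iterating from $A_0 \le \varepsilon$ yields $A_j \lesssim 2^j \varepsilon$ for all $j < J$, and summing over $j$ gives $\|\b{\rm w}\|_{\mathcal{S}(\R, {\rm\dot H}^1)} \lesssim 2^J \varepsilon$.

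The main obstacle is essentially the order in which the small parameters are selected: $\eta$ must be fixed first from universal Strichartz constants; $J = J(M, \eta)$ is then determined by the hypothesis on $\tilde{\b{\rm u}}$; and finally $\varepsilon = \varepsilon(M)$ must be taken small enough that $2^J \varepsilon$ stays below the smallness threshold needed to justify the single-interval bootstrap at every step. Once this is arranged, the triangle inequality gives $\|\b{\rm u}\|_{\mathcal{S}({\rm\dot H}^1)} \le \|\tilde{\b{\rm u}}\|_{\mathcal{S}({\rm\dot H}^1)} + \|\b{\rm w}\|_{\mathcal{S}(\R, {\rm\dot H}^1)} \le M + C\,2^J \varepsilon =: L(M)$, completing the proof. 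The analytic ingredients (Strichartz, Leibniz, H\"older) are all unproblematic in $d = 6$ because the nonlinearity is exactly quadratic and no derivative loss arises; the work is in the bookkeeping rather than in any individual estimate.
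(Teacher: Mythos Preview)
Your proposal is correct and follows precisely the standard long-time perturbation argument the paper invokes: the paper does not give its own proof of Proposition~\ref{pt} but merely refers to \cite{Gao2021, Tao2005}, and what you have outlined---the difference equation for $\b{\rm w}$, the subdivision of the time axis into $J(M,\eta)$ pieces on which $\|\tilde{\b{\rm u}}\|_{\mathcal{S}(I_j,{\rm\dot H}^1)}\le\eta$, the single-interval bootstrap via Strichartz, and the iteration of the free-evolution datum---is exactly the argument in those references adapted to the bilinear nonlinearity. Your remark that the quadratic structure makes the Leibniz step cleaner than in the generic power-type case is also on point.
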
	
	
\subsection{Profiles decomposition}
In this section, we first introduce the linear profiles decomposition for the ${\rm\dot H}^1(\mathbb{R}^6)$ Sobolev function. After that, we will discuss the inverse Strichartz inequality and use it to establish the linear profiles decomposition for the Schr\"odinger flow $\verb"S"(t) $.

\begin{theorem}[Linear profiles decomposition for Sobolev functions \cite{Ja}]\label{Profile-K}
Let $ \b{\rm \phi}_n = ( \phi^1_n, \phi^2_n, \phi^3_n ) $ be an uniformly bounded sequence in $ {\rm\dot H}^1(\mathbb{R}^6) $ with $ \Vert \b{\rm \phi}_n \Vert \le A $ for any $ 1 \le n < +\infty $.
Then there exists $ J^* \in \{ 1, 2, \cdot\cdot\cdot \} \cup \{ \infty \} $ such that for each finite $ 1 \le J \le J^* $, we have the decomposition:
\[
\b{\rm \phi}_n(x) = \sum_{j=1}^J(\lambda_n^j)^{-\frac12}\b{\rm \phi}^j\Big(\frac{x-x_n^j}{\lambda_n^j}\Big)+\b{\rm r}_n^J(x), \qquad 1 \le J \le J^*.
\]
where $ \b{\rm r }^J_n := ( r^{1,J}_n, r^{2,J}_n, r^{3,J}_n ) $ in $ {\rm\dot H}^1(\mathbb{R}^6) $.
The decomposition has the following properties:
\begin{equation}
\limsup_{J\to J^*}\limsup_{n\to\infty}\|\b{\rm r}_n^J\|_{\rm L_x^6}=0,
\end{equation}
\begin{equation}
\limsup_{n\to\infty}\Big|\|\phi_n^{1}\|_{\rm\dot H^1}^2-\sum_{j=1}^J\|\phi^{j,1}\|_{\rm\dot H^1}^2-\|r_n^{J,1}\|_{\rm\dot H^1}^2\Big|=0,
\end{equation}
\begin{equation}
\limsup_{n\to\infty}\Big|\|\phi_n^{2}\|_{\rm\dot H^1}^2-\sum_{j=1}^J\|\phi^{j,2}\|_{\rm\dot H^1}^2-\|r_n^{J,2}\|_{\rm\dot H^1}^2\Big|=0,
\end{equation}
\begin{equation}
\limsup_{n\to\infty}\Big|\|\phi_n^{3}\|_{\rm\dot H^1}^2-\sum_{j=1}^J\|\phi^{j,3}\|_{\rm\dot H^1}^2-\|r_n^{J,3}\|_{\rm\dot H^1}^2\Big|=0.
\end{equation}

\begin{equation}
\liminf_{n\to\infty}\Big[\frac{|x_n^l-x_n^k|^2}{\lambda_n^l\lambda_n^k}
+\frac{\lambda_n^k}{\lambda_n^l}  +\frac{\lambda_n^l}{\lambda_n^k}\Big]=\infty, \qquad \forall ~ l \ne k,
\end{equation}
\begin{equation}
(\lambda_n^j)^{\frac12}\b{\rm r}_n^J(\lambda_n^j+x_n^j)\to0,\ weakly\ in\ \rm\dot H^1,
\end{equation}
\begin{align}
\limsup_{n\to\infty}\liminf_{n\to\infty}\big[V(\b{\rm \phi}_n)-\sum_{j=1}^{J}V(T_{\lambda_{n}^{j}}\b{\rm\phi}^j)-V(\b{\rm r}_n^J)\big]=0. \label{jianjin-P}
\end{align}
\end{theorem}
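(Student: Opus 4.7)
The plan is to adapt the Keraani/Bahouri--G\'erard paradigm for profile decompositions in $\dot H^1(\mathbb{R}^6)$ to the vector-valued setting, and then to verify the trilinear asymptotic orthogonality~\eqref{jianjin-P} for the potential energy~$V$.

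First I would establish an appropriate refined Sobolev embedding of the form
\[
\|f\|_{L^6(\mathbb{R}^6)}^3 \lesssim \|f\|_{\dot H^1}^2\,\sup_{N\in 2^{\mathbb{Z}}} N^{-2}\,\|P_N f\|_{L^\infty},
\]
via Littlewood--Paley together with Bernstein (Lemma~\ref{Bern}). This yields the corresponding inverse Sobolev embedding: any sequence bounded in $\dot H^1(\mathbb{R}^6)$ with $\|f_n\|_{L^6}\gtrsim \eta>0$ admits, after passing to a subsequence, parameters $(\lambda_n,x_n)$ for which the canonical $\dot H^1$-invariant rescaling of $f_n$ converges weakly to a nonzero $f\in\dot H^1$ with $\|f\|_{\dot H^1}$ bounded below by a fixed positive power of~$\eta$.

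Next I would iterate. At each step, select the component $\phi^i_n$ of the current remainder carrying the largest share of the $\rm L^6$ norm, apply the scalar inverse Sobolev embedding to it to produce $(\lambda_n^J,x_n^J)$, and take weak limits of all three rescaled components at those same parameters (some possibly zero) to define the vector profile $\b{\rm \phi}^J$. Subtracting and iterating generates the expansion. The parameter-orthogonality is enforced by requiring that the rescaled remainders tend weakly to zero at every previously extracted pair of parameters; this simultaneously yields the weak-null property stated for the remainder and, via weak convergence in Hilbert space, the Pythagorean identities for the $\dot H^1$ norms componentwise. The $\rm L^6$-vanishing of the remainder follows because the $\dot H^1$-decrement at step $J$ is bounded below by a fixed positive power of $\eta_J:=\limsup_n\|\b{\rm r}_n^J\|_{\rm L^6}$, whose sum cannot exceed the initial bound~$A^2$, forcing $\eta_J\to 0$.

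The main obstacle is~\eqref{jianjin-P}. Unlike the kinetic energy, $V(\b{\rm u})=\Re\!\int\overline{u^1}\,\overline{u^2}\,u^3$ mixes all three components, so one must show that every cross term $\int\overline{\phi^{j_1,1}_n}\,\overline{\phi^{j_2,2}_n}\,\phi^{j_3,3}_n$ with $(j_1,j_2,j_3)$ not all equal vanishes in the limit, together with the mixed terms involving $\b{\rm r}_n^J$. For off-diagonal triples I would change variables to one of the three scales, say $(\lambda_n^{j_1},x_n^{j_1})$: then by the parameter-orthogonality the remaining two rescaled factors converge weakly to zero in $\dot H^1$ and hence strongly to zero on compact sets, and a density/truncation argument combined with H\"older through the $\dot H^1\hookrightarrow L^6$ embedding forces the integral to zero. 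Terms containing $\b{\rm r}_n^J$ are handled by a Brezis--Lieb-style splitting in the trilinear form, using the $\rm L^6$-vanishing and H\"older. Careful bookkeeping of these trilinear interactions across three independent sequences of scales and translations is the technically hardest step.
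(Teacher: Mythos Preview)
The paper does not actually supply a proof of this theorem; it is stated with a citation to Jaffard~\cite{Ja} and used as a black box (most notably in the proof of Proposition~\ref{ground state}). Your outline is exactly the standard Bahouri--G\'erard/Keraani/Jaffard scheme and is the right way to produce such a decomposition, including the iterative extraction and the Pythagorean expansions. The handling of the trilinear decoupling~\eqref{jianjin-P}, which is the only part not literally in~\cite{Ja}, is also correctly sketched: change variables to one profile's scale and use parameter-orthogonality plus density to kill off-diagonal triples, and use vanishing of the remainder in the critical Lebesgue space for the mixed terms.

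One concrete slip to fix: in $\mathbb{R}^6$ the critical Sobolev embedding for $\dot H^1$ is into $L^3$, not $L^6$ (indeed $\tfrac{1}{p}=\tfrac12-\tfrac{1}{6}$ gives $p=3$), and the paper's own potential energy estimate $|V(\b{\rm u})|\le\|u^1\|_{L^3}\|u^2\|_{L^3}\|u^3\|_{L^3}$ confirms this. The occurrence of ``$\rm L_x^6$'' in the statement, as well as the scaling exponent $(\lambda_n^j)^{-1/2}$ (which should be $(\lambda_n^j)^{-2}$ to preserve the $\dot H^1(\mathbb{R}^6)$ norm, consistent with the paper's own $T_\lambda f(x)=\lambda^{-2}f((x-x_n^j)/\lambda)$ in the Remark following the theorem), are evidently typos in the paper that you have carried over. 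Your refined Sobolev inequality as written is dimensionally inconsistent; the correct version is of the form $\|f\|_{L^3(\mathbb{R}^6)}\lesssim\|f\|_{\dot H^1}^{1-\theta}\bigl(\sup_N N^{-2}\|P_N f\|_{L^\infty}\bigr)^{\theta}$ for some $\theta\in(0,1)$, and with that correction your argument goes through unchanged.
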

	
\begin{remark}
We use the  $ \mathcal{T}_\lambda \b{\rm u}$ to denote $(T_\lambda u^1, T_\lambda u^2, T_\lambda u^3)^T $ for $ \b{\rm u} = (u^1, u^2, u^3 )^T$, 	where $ T_{\lambda}f(x) := \frac1{\lambda^2} f(\frac{x - x^j_n}\lambda) $ and so $ T_{\lambda}^{-1}f(x) := \lambda^2 f(\lambda x + x^j_n) $.	
\end{remark}

Now, we introduce the refined Strichartz estimate, which is very powerful in the proof of the inverse Strichartz inequality.

\begin{lemma}[Refined Strichartz estimate]\label{RSE}
For any function $ \b{\rm g} \in {\rm\dot H}^1(\mathbb{R}^6) $, we have
\begin{equation}
\left\Vert \verb"S"(t) \b{\rm g} \right\Vert_{{\rm L}_{t, x}^4(\mathbb{R} \times \mathbb{R}^6)}
\lesssim \Vert \b{\rm g} \Vert_{{\rm\dot H}^1(\mathbb{R}^6)}^{\frac12} \sup_{N \in 2^{\mathbb{Z}}} \left\Vert \verb"S"(t) P_{N} \b{\rm g} \right\Vert_{{\rm L}_{t, x}^4(\mathbb{R} \times \mathbb{R}^6)}^{\frac12}.
\end{equation}
\end{lemma}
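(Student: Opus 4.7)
The plan is to combine a Littlewood--Paley decomposition with a bilinear Strichartz estimate, playing the bilinear bound off against a trivial H\"older estimate in order to produce the factor $\sup_N \|\verb"S"(t) P_N g\|_{L^4_{t,x}}$ on the right-hand side. Since the matrix $A$ is diagonal with positive entries, each scalar propagator $e^{i\kappa_j t\Delta}$ reduces by rescaling $t \mapsto t/\kappa_j$ to the free Schr\"odinger flow on $\R^6$, an operation that preserves both $L^4_{t,x}$ norms and ${\dot H}^1$ norms of the initial data; hence it suffices to prove the scalar analogue
\begin{equation*}
\bigl\|e^{it\Delta} g\bigr\|_{L^4_{t,x}(\R \times \R^6)} \lesssim \|g\|_{{\dot H}^1}^{1/2} \sup_N \bigl\|e^{it\Delta} P_N g\bigr\|_{L^4_{t,x}}^{1/2}.
\end{equation*}

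I would begin from the identity $\|e^{it\Delta} g\|_{L^4_{t,x}}^4 = \bigl\||e^{it\Delta} g|^2\bigr\|_{L^2_{t,x}}^2$ and the expansion $|e^{it\Delta} g|^2 = \sum_{N_1, N_2} u_{N_1}\overline{u_{N_2}}$ with $u_N := e^{it\Delta} P_N g$, reducing by symmetry to pairs $N_1 \leq N_2$. For each such pair the H\"older bound gives
\begin{equation*}
\bigl\|u_{N_1} \overline{u_{N_2}}\bigr\|_{L^2_{t,x}} \leq \|u_{N_1}\|_{L^4_{t,x}} \|u_{N_2}\|_{L^4_{t,x}} \leq \Bigl(\sup_N \|u_N\|_{L^4_{t,x}}\Bigr)^2,
\end{equation*}
while the classical bilinear Strichartz estimate for the free Schr\"odinger flow on $\R^6$ yields, for $N_1 \leq N_2$,
\begin{equation*}
\bigl\|u_{N_1} \overline{u_{N_2}}\bigr\|_{L^2_{t,x}} \lesssim \Bigl(\frac{N_1}{N_2}\Bigr)^{\alpha} \|P_{N_1} g\|_{{\dot H}^1} \|P_{N_2} g\|_{{\dot H}^1}
\end{equation*}
for some fixed $\alpha > 0$. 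Taking the geometric mean of these two inequalities produces
\begin{equation*}
\bigl\|u_{N_1} \overline{u_{N_2}}\bigr\|_{L^2_{t,x}}^2 \lesssim \Bigl(\frac{N_1}{N_2}\Bigr)^{\alpha} \|P_{N_1} g\|_{{\dot H}^1} \|P_{N_2} g\|_{{\dot H}^1} \Bigl(\sup_N \|u_N\|_{L^4_{t,x}}\Bigr)^2.
\end{equation*}

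To close the argument, I would exploit almost-orthogonality of the products $u_{N_1}\overline{u_{N_2}}$, whose $x$-Fourier supports lie essentially in the annulus $|\xi| \sim N_2$ when $N_1 \ll N_2$, so that grouping the sum by $N_2$ and invoking Plancherel yields
\begin{equation*}
\|e^{it\Delta} g\|_{L^4_{t,x}}^4 \lesssim \sum_{N_1 \leq N_2} \bigl\|u_{N_1}\overline{u_{N_2}}\bigr\|_{L^2_{t,x}}^2.
\end{equation*}
Substituting the geometric-mean bound and applying Schur's test to the dyadic kernel $K(N_1, N_2) = (N_1/N_2)^\alpha \mathbf{1}_{N_1 \leq N_2}$ (whose row and column sums are controlled by finite geometric series) reduces the double sum to $\sum_N \|P_N g\|_{{\dot H}^1}^2 \lesssim \|g\|_{{\dot H}^1}^2$, and a fourth root gives the claim.

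The main obstacle is the bilinear Strichartz estimate, whose proof rests on Plancherel together with the transversality of the paraboloid at well-separated frequency scales; this is a classical fact in $\R^d$ for $d \geq 2$, and in $d = 6$ it yields the positive gain $(N_1/N_2)^{\alpha}$ required above. A secondary subtlety is the near-diagonal regime $N_1 \sim N_2$ in the almost-orthogonality step, where frequency separation is lost; such terms can be absorbed into the diagonal and handled by the standard Littlewood--Paley square-function estimate for $L^4$.
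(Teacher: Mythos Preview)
The paper does not supply its own proof of this lemma; it simply states the estimate and cites Killip--Visan's lecture notes. Your sketch is precisely the standard argument from that reference and is correct: reduce to the scalar propagator via the diagonal structure of $A$, Littlewood--Paley decompose, use the bilinear Strichartz estimate to gain $(N_1/N_2)^\alpha$ off the diagonal (in $d=6$ one obtains $\alpha=3/2$ after converting $L^2$ norms to $\dot H^1$), interpolate against the trivial H\"older bound to extract the factor $\sup_N\|\verb"S"(t)P_N g\|_{L^4_{t,x}}$, and close by Schur.

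One small remark on presentation: the passage from $\bigl\|\sum u_{N_1}\overline{u_{N_2}}\bigr\|_{L^2_{t,x}}^2$ to $\sum\|u_{N_1}\overline{u_{N_2}}\|_{L^2_{t,x}}^2$ via ``almost-orthogonality in $N_2$'' is a bit heuristic as written, since for a fixed $N_2$ the pieces indexed by $N_1$ are not orthogonal. The clean route --- which you yourself point to at the end --- is to use the Littlewood--Paley square-function estimate in $L^4_x$ from the outset:
\[
\|e^{it\Delta}g\|_{L^4_{t,x}}^4 \sim \Bigl\|\Bigl(\sum_N|u_N|^2\Bigr)^{1/2}\Bigr\|_{L^4_{t,x}}^4 = \sum_{N_1,N_2}\|u_{N_1}u_{N_2}\|_{L^2_{t,x}}^2,
\]
which handles the diagonal and off-diagonal uniformly and makes the orthogonality exact.
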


\begin{proposition}[Inverse Strichartz inequality]\label{ISI}
If the sequence $ \{ \b{\rm g}_n \}_{n=1}^\infty \subset {\rm\dot H}^1(\mathbb{R}^6) $ satisfies
\begin{equation*}
\lim_{n \to \infty} \Vert \b{\rm g}_n \Vert_{{\rm\dot H}^1(\mathbb{R}^6)} = A < \infty, \quad \text{and} \quad \lim_{n \to \infty} \left\Vert \verb"S"(t) \b{\rm g}_n \right\Vert_{{\rm L}_{t, x}^4(\mathbb{R} \times \mathbb{R}^6)} = \eps > 0.
\end{equation*}
Then there exists a subsequence of $ \{ n \} ($still denoted by $ \{ n \})$, $ \b{\rm \phi} \in {\rm\dot H}^1(\R^6) $, $ \{ \lambda_n \}_{n=1}^\infty \subset (0, \infty) $ and $ \{ (t_n, x_n) \}_{n=1}^\infty \subset \mathbb{R} \times \mathbb{R}^6 $ such that

\begin{equation}
\lambda_n^2 \left[ \verb"S"(t_n) \b{\rm g}_n \right] (\lambda_n x + x_n) \rightharpoonup \b{\rm \phi} (x) \quad \text{weakly in $ {\rm\dot H}^1(\mathbb{R}^6) $},
\end{equation}

\begin{equation}
\liminf_{n \to \infty} \left\{ \Vert \b{\rm g}_n \Vert_{{\rm\dot H}^1(\mathbb{R}^6)}^2 - \Vert \b{\rm g}_n - \b{\rm \phi}_n \Vert_{{\rm\dot H}^1(\mathbb{R}^6)}^2 \right\}
= \Vert \b{\rm \phi} \Vert_{{\rm\dot H^1(\mathbb{R}^6)}}^2
\gtrsim A^{-10} \eps^{12},
\end{equation}

\begin{equation}
\liminf_{n \to \infty} \left\{ \left\Vert \verb"S"(t) \b{\rm g}_n \right\Vert_{{\rm L}_{t, x}^4(\mathbb{R} \times \mathbb{R}^6)}^4  - \left\Vert \verb"S"(t) ( \b{\rm g}_n - \b{\rm \phi}_n ) \right\Vert_{{\rm L}_{t, x}^4(\mathbb{R} \times \mathbb{R}^6)}^4 \right\}
\gtrsim A^{-20} \eps^{24},
\end{equation}

\begin{equation}
\liminf_{n \to \infty} \left\{ \Vert \b{\rm g}_n \Vert_{{\rm L}^3(\mathbb{R}^6)}^3 - \Vert \b{\rm g}_n - \b{\rm \phi}_n \Vert_{{\rm L}^3(\mathbb{R}^6)}^3 - \left\Vert \verb"S" \left( - \frac{t_n}{\lambda_n^2} \right) \b{\rm \phi} \right\Vert_{{\rm L}^3(\mathbb{R}^6)}^3 \right\} = 0,
\end{equation}	
where
\begin{equation}
\b{\rm \phi}_n (x) := \frac1{\lambda_n^2} \left[ \verb"S" \left( -\frac{t_n}{\lambda_n^2} \right) \b{\rm \phi} \right] \left( \frac{x - x_n}{\lambda_n} \right).
\end{equation}
\end{proposition}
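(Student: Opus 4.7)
The plan is to follow the standard refined-Strichartz / profile extraction scheme of Keraani--Killip--Visan, adapted to the vector-valued $\dot{\rm H}^1(\R^6)$ setting. The starting point is Lemma \ref{RSE}: since $\|S(t)\b{\rm g}_n\|_{{\rm L}^4_{t,x}}\to\eps$ and $\|\b{\rm g}_n\|_{{\rm\dot H}^1}\to A$, the refined Strichartz inequality forces the existence, after passing to a subsequence, of a dyadic frequency $N_n\in 2^{\Z}$ such that
\begin{equation*}
\bigl\|S(t)P_{N_n}\b{\rm g}_n\bigr\|_{{\rm L}^4_{t,x}(\R\times\R^6)}\gtrsim A^{-1}\eps^{2}.
\end{equation*}
I would then set the scale $\lambda_n:=N_n^{-1}$; this is the intrinsic frequency scale of the piece of $\b{\rm g}_n$ that is responsible for the non-vanishing of the Strichartz norm.

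Next I would extract the spatial-temporal concentration point. Interpolating the $L^4_{t,x}$ bound above between a uniform ${\rm L}^\infty_{t,x}$ bound (obtained via Bernstein: $\|S(t)P_{N_n}\b{\rm g}_n\|_{{\rm L}^\infty_x}\lesssim N_n^2\|P_{N_n}\b{\rm g}_n\|_{{\rm L}^2}\lesssim N_n\|\b{\rm g}_n\|_{{\rm\dot H}^1}$) and the ${\rm L}^{q}_{t,x}$ Strichartz norm of $S(t)P_{N_n}\b{\rm g}_n$ (controlled by $A$) produces a point $(t_n,x_n)\in\R\times\R^6$ at which
\begin{equation*}
\bigl|\bigl(S(t_n)P_{N_n}\b{\rm g}_n\bigr)(x_n)\bigr|\gtrsim N_n^{2}\,A^{-\alpha}\eps^{\beta}
\end{equation*}
for suitable powers $\alpha,\beta$. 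Defining the rescaled sequence
\begin{equation*}
\b{\rm h}_n(x):=\lambda_n^{2}\bigl[S(t_n)\b{\rm g}_n\bigr](\lambda_n x+x_n),
\end{equation*}
the $\dot{\rm H}^1$-norm is preserved, so $\|\b{\rm h}_n\|_{{\rm\dot H}^1}\lesssim A$. Passing to a further subsequence, $\b{\rm h}_n\rightharpoonup\b{\rm\phi}$ weakly in ${\rm\dot H}^1(\R^6)$, which is the defining weak convergence of the proposition. Testing the weak limit $\b{\rm\phi}$ against a suitable ${\rm\dot H}^{-1}$ bump centered at the origin (equivalently, a Littlewood--Paley projection $P_{\sim 1}$ evaluated at $x=0$) and transferring the pointwise lower bound on $\b{\rm h}_n$ through this dual pairing yields $\|\b{\rm\phi}\|_{{\rm\dot H}^1}\gtrsim A^{-10}\eps^{12}$.

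The three decoupling identities are then proved separately. For the ${\rm\dot H}^1$ decoupling, expand $\|\b{\rm g}_n-\b{\rm\phi}_n\|_{{\rm\dot H}^1}^2$, use unitarity of $S(t_n)$ and the scaling/translation invariance of the homogeneous Sobolev norm, and exploit the defining weak convergence $\b{\rm h}_n\rightharpoonup\b{\rm\phi}$ to make the cross term vanish in the limit. The ${\rm L}^3$ decoupling is a standard Brezis--Lieb type argument: ${\rm\dot H}^1(\R^6)\hookrightarrow{\rm L}^3(\R^6)$ provides compactness of the embedding on bounded sets together with a.e.\ convergence after extracting a further subsequence, and the Brezis--Lieb lemma then removes the cross terms in $\|\b{\rm g}_n\|^3_{{\rm L}^3}-\|\b{\rm g}_n-\b{\rm\phi}_n\|^3_{{\rm L}^3}$. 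The ${\rm L}^4_{t,x}$ decoupling is the technically heaviest piece: one writes $S(t)\b{\rm g}_n=S(t)(\b{\rm g}_n-\b{\rm\phi}_n)+S(t)\b{\rm\phi}_n$, expands the fourth power, and uses the dispersive estimate of Lemma \ref{disper} to show that each mixed term decays, leaving the diagonal contribution, which is bounded below by the lower bound on $\b{\rm\phi}$ together with the subcritical improvement $A^{-20}\eps^{24}$.

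The main obstacle is obtaining the quantitative lower bound on the weak limit, i.e.\ tracking the explicit powers $A^{-10}\eps^{12}$ and $A^{-20}\eps^{24}$ through the interpolation between ${\rm L}^4_{t,x}$ and ${\rm L}^\infty_{t,x}$ and through the extraction of the pointwise concentration point; this is where the specific exponents $d=6$ and $p=2$ of the nonlinearity enter, and where the vector-valued nature of $\b{\rm u}$ must be handled carefully (a frequency-dominant component must be selected among $u^1,u^2,u^3$ before applying the scalar refined Strichartz lemma). Once these quantitative bounds are in hand, the decoupling statements follow from the now-standard machinery.
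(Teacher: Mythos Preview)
The paper does not supply its own proof of this proposition; it is simply stated with the citation \cite{Killip2008} and used as a black box in the construction of the linear profile decomposition (Theorem~\ref{lpd}). Your outline is precisely the standard Killip--Visan argument from that reference, so in that sense you are reconstructing exactly what the paper is invoking.

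One small correction: for the $L^4_{t,x}$ decoupling you propose to ``use the dispersive estimate of Lemma~\ref{disper} to show that each mixed term decays.'' This is not the mechanism. Dispersion alone does not kill the cross terms, because $S(t)\b{\rm\phi}_n$ and $S(t)(\b{\rm g}_n-\b{\rm\phi}_n)$ live at the same scales. The correct route is a Brezis--Lieb / refined Fatou argument in $L^4_{t,x}$: after rescaling, one shows $S(t)\b{\rm h}_n\to S(t)\b{\rm\phi}$ almost everywhere in $(t,x)$ (via Rellich--Kondrachov locally in space combined with the continuity of the free flow, or via local smoothing), and then the refined Fatou lemma gives
\[
\liminf_{n\to\infty}\Bigl(\|S(t)\b{\rm h}_n\|_{L^4_{t,x}}^4-\|S(t)(\b{\rm h}_n-\b{\rm\phi})\|_{L^4_{t,x}}^4\Bigr)\ge \|S(t)\b{\rm\phi}\|_{L^4_{t,x}}^4,
\]
which is then bounded below via Sobolev--Strichartz by $\|\b{\rm\phi}\|_{{\rm\dot H}^1}^4\gtrsim A^{-20}\eps^{24}$. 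Your remark about selecting a dominant component among $u^1,u^2,u^3$ is well taken: since the three propagators $e^{\kappa_j it\Delta}$ share the same parabolic scaling, a single choice of $(\lambda_n,t_n,x_n)$ works simultaneously for all components once one has localized to a single dyadic shell $N_n$.
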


Combining the refine Strichartz estimate and  inverse Strichartz inequality, we can establish the linear profiles decompositions for the the Schr\"odinger flow $\verb"S"(t) $.

\begin{theorem}[Linear profiles decomposition for the Schr\"odinger flow]\label{lpd}
Let $ \b{\rm \phi}_n = ( \phi^1_n, \phi^2_n, \phi^3_n ) $ be an uniformly bounded sequence in $ {\rm\dot H}^1(\mathbb{R}^6) $ with $ \Vert \b{\rm \phi}_n \Vert \le A $ for any $ 1 \le n < +\infty $.
Then there exists $ J^* \in \{ 1, 2, \cdots \} \cup \{ \infty \} $ such that for each finite $ 1 \le J \le J^* $, we have the decomposition:
	
\begin{equation}
\b{\rm \phi}_n = \sum_{j=1}^J \mathcal{T}_{\lambda^j_n} \left[ \verb"S" \left( t^j_n\right) \b{\rm \phi}^j \right] + \b{\rm \Phi}^J_n,
\label{pro}
\end{equation}
where $ \b{\rm \Phi}^J_n := ( \Phi^{1,J}_n, \Phi^{2,J}_n, \Phi^{3,J}_n ) $ in $ {\rm\dot H}^1(\mathbb{R}^6) $.

In fact, the $ \b{\rm \Phi}^J_n := ( \Phi^{1,J}_n, \Phi^{2,J}_n, \Phi^{3,J}_n ) $ in $ {\rm\dot H}^1(\mathbb{R}^6) $ has some properties as follow:
\begin{equation}
\lim_{J \to J^*} \limsup_{n \to \infty} \left\Vert \verb"S"(t) \b{\rm \Phi}^J_n \right\Vert_{{\rm L}_{t, x}^4(\mathbb{R} \times \mathbb{R}^6)}
= 0,
\label{remainder1}
\end{equation}

\begin{equation}
\verb"S"(-t^J_n) \left[ \mathcal{T}_{\lambda^J_n}^{-1} \b{\rm \Phi}^J_n \right] \rightharpoonup 0 \qquad \text{weakly in $ {\rm\dot H}^1(\mathbb{R}^6) $},
\label{remainder2}
\end{equation}

\begin{equation}
\left\Vert \phi^1_n \right\Vert_{{\dot H}^1(\mathbb{R}^6)}^2
= \sum_{j=1}^J \left\Vert \phi^{1,j} \right\Vert_{{\dot H}^1(\mathbb{R}^6)}^2 + \left\Vert \Phi^{1,J}_n \right\Vert_{{\dot H}^1(\mathbb{R}^6)}^2 + o_n (1),
\label{ape1}
\end{equation}

\begin{equation}
\left\Vert \phi^2_n \right\Vert_{{\dot H}^1(\mathbb{R}^6)}^2
= \sum_{j=1}^J \left\Vert \phi^{2,j} \right\Vert_{{\dot H}^1(\mathbb{R}^6)}^2 + \left\Vert \Phi^{2,J}_n \right\Vert_{{\dot H}^1(\mathbb{R}^6)}^2 + o_n (1),
\label{ape2}
\end{equation}

\begin{equation}
\left\Vert \phi^3_n \right\Vert_{{\dot H}^1(\mathbb{R}^6)}^2
= \sum_{j=1}^J \left\Vert \phi^{3,j} \right\Vert_{{\dot H}^1(\mathbb{R}^6)}^2 + \left\Vert \Phi^{3,J}_n \right\Vert_{{\dot H}^1(\mathbb{R}^6)}^2 + o_n (1).
\label{ape3}
\end{equation}

\begin{equation}
\left\Vert \phi^1_n \right\Vert_{L^3(\mathbb{R}^6)}^3
= \sum_{j=1}^J \left\Vert e^{\kappa_1it^j_n\Delta}\phi^{1,j} \right\Vert_{L^3(\mathbb{R}^6)}^3 + \left\Vert \Phi^{1,J}_n \right\Vert_{L^3(\mathbb{R}^6)}^3 + o_n (1),
\label{ape4}
\end{equation}

\begin{equation}
\left\Vert \phi^2_n \right\Vert_{L^3(\mathbb{R}^6)}^3
= \sum_{j=1}^J \left\Vert e^{\kappa_2it^j_n\Delta}\phi^{2,j} \right\Vert_{L^3(\mathbb{R}^6)}^3 + \left\Vert \Phi^{2,J}_n \right\Vert_{L^3(\mathbb{R}^6)}^3 + o_n (1),
\label{ape5}
\end{equation}

\begin{equation}
\left\Vert \phi^3_n \right\Vert_{L^3(\mathbb{R}^6)}^3
= \sum_{j=1}^J \left\Vert e^{\kappa_3it^j_n\Delta}\phi^{3,j} \right\Vert_{L^3(\mathbb{R}^6)}^3 + \left\Vert \Phi^{3,J}_n \right\Vert_{L^3(\mathbb{R}^6)}^3 + o_n (1).
\label{ape6}
\end{equation}

Besides, for any $ j \ne l $, we have the almost orthogonal conditions as follows:	

\begin{equation}
\frac{\lambda^j_n}{\lambda^l_n} + \frac{\lambda^l_n}{\lambda^j_n} + \frac{\vert x^j_n - x^l_n \vert^2}{\lambda^j_n \lambda^l_n} + \frac{\vert t^j_n (\lambda^j_n)^2 - t^l_n (\lambda^l_n)^2 \vert}{\lambda^j_n \lambda^l_n} \to + \infty, \quad \forall ~ j \ne l \in \{ 1, 2, \cdot\cdot\cdot, J \}.
\label{asy}
\end{equation}

In addition, we may assume that  either $ t^j_n \equiv 0 $ or $ t^j_n \to \pm \infty $ for any $ 0 \le j \le J $.
Particularly, fixed $ 1 \le j_0 \le J $, there exists $ \alpha(j_0) > 0 $ such that
\begin{equation}
K\left( \b{\rm \phi}^{j_0} \right) \ge \alpha(j_0).
\label{H>0}
\end{equation}
\end{theorem}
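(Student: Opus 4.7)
The plan is to iterate the Inverse Strichartz Inequality (Proposition~\ref{ISI}) to extract profiles one at a time from $\b{\phi}_n$. I set $\b{\Phi}^0_n := \b{\phi}_n$ and at stage $J\ge 1$ define $\eps_J := \limsup_{n\to\infty}\|\verb"S"(t)\b{\Phi}^{J-1}_n\|_{{\rm L}^4_{t,x}}$. If $\eps_J = 0$ the process terminates with $J^* := J-1$; otherwise Proposition~\ref{ISI} (after subsequence extraction) supplies a nontrivial profile $\b{\phi}^J$ and parameters $(\lambda^J_n,t^J_n,x^J_n)$, and I put $\b{\Phi}^J_n := \b{\Phi}^{J-1}_n - \mathcal{T}_{\lambda^J_n}[\verb"S"(t^J_n)\b{\phi}^J]$, yielding the formula \eqref{pro} after a Cantor-diagonal extraction. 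The $\dot{\rm H}^1$ decoupling in Proposition~\ref{ISI} produces the decrement $\|\b{\Phi}^{J-1}_n\|_{\dot{\rm H}^1}^2 - \|\b{\Phi}^J_n\|_{\dot{\rm H}^1}^2 \gtrsim A^{-10}\eps_J^{12} + o_n(1)$; since the total $\dot{\rm H}^1$ mass is bounded by $A^2$, summation over $J$ forces $\eps_J\to 0$, giving \eqref{remainder1}. The weak convergence \eqref{remainder2} and the kinetic lower bound \eqref{H>0} are direct outputs of the same inverse estimate.

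Next I would establish the asymptotic orthogonality \eqref{asy} by contradiction. If for some $j<l$ all four quantities in \eqref{asy} stayed bounded, after a further subsequence I could arrange $\lambda^j_n/\lambda^l_n\to\mu>0$, $(x^j_n-x^l_n)/\lambda^l_n\to y_0\in\R^6$, and $((\lambda^j_n)^2 t^j_n - (\lambda^l_n)^2 t^l_n)/(\lambda^j_n\lambda^l_n)\to \tau\in\R$. Transporting the $l$-th extraction into the frame of the $j$-th via the explicit change of variables reveals that the weak limit defining $\b{\phi}^l$ is a weak limit of $\verb"S"(\cdot)\b{\Phi}^{l-1}_n$ against exactly the transformation that produced $\b{\phi}^j$; but \eqref{remainder2} at step $j$ forces that weak limit to vanish and hence $\b{\phi}^l\equiv 0$, contradicting $\eps_l>0$. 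With orthogonality in hand, iterating the Pythagorean identity of Proposition~\ref{ISI} componentwise delivers the $\dot H^1$-decoupling \eqref{ape1}--\eqref{ape3}, and a final diagonal extraction lets me assume $t^j_n\equiv 0$ or $t^j_n\to\pm\infty$ for each $j$ (absorbing any bounded subsequence into the definition of $\b{\phi}^j$ via continuity of $\verb"S"(\cdot)$).

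The main obstacle is the $L^3$ decoupling in \eqref{ape4}--\eqref{ape6}, since $L^3$ is not a Hilbert norm and mere iteration of Proposition~\ref{ISI} is not enough once there are three or more profiles present. I would argue that the asymptotic orthogonality of $(\lambda,x,t)$-parameters forces the bubbles $\mathcal{T}_{\lambda^j_n}[\verb"S"(t^j_n)\b{\phi}^j]$ to be asymptotically disjoint in $L^3$ and decouple the cross terms in the expansion of $\|\sum_j \cdots\|_{L^3}^3$. The argument splits by which of the four quantities in \eqref{asy} diverges: when scales separate, Bernstein scaling from Lemma~\ref{Bern} decouples the $L^3$ norms; when scales are comparable but spatial centers diverge, a change of variables plus dominated convergence kills the cross terms after cutting the profiles off to compact support; when only the time shifts diverge, the dispersive estimate of Lemma~\ref{disper} applied to $\verb"S"((\lambda^j_n)^2 t^j_n - (\lambda^l_n)^2 t^l_n)$ produces the necessary decay on the bounded profiles. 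Truncating the profile sum at a large finite $J$ and controlling the remainder's $L^3$ norm by Sobolev embedding together with \eqref{remainder1} closes the identities and completes the proof.
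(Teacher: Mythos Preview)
Your proposal is correct and follows exactly the standard scheme the paper defers to: the paper does not give its own proof of Theorem~\ref{lpd} but simply states that it follows by ``combining the refined Strichartz estimate and inverse Strichartz inequality'' and cites \cite{Killip2008}. Your outline---iterating Proposition~\ref{ISI}, extracting profiles, proving parameter orthogonality by contradiction, and then deducing the $\dot H^1$ and $L^3$ decouplings---is precisely the Killip--Visan argument the citation points to, so there is nothing to compare.

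One small clarification on your last paragraph: you do not actually need \eqref{remainder1} to control the remainder's $L^3$ norm, since $\|\Phi^{i,J}_n\|_{L^3}^3$ appears on the right-hand side of \eqref{ape4}--\eqref{ape6}. What you need is that the \emph{cross terms} between each profile and the remainder vanish, and this comes from the weak convergence \eqref{remainder2} (after approximating each profile by a compactly supported function), not from the smallness of the $L^4_{t,x}$ norm. Also note that the $L^3$ decoupling at each extraction step is already recorded in Proposition~\ref{ISI}, so a careful telescoping (upgrading the $\liminf$ there to a $\lim$ via the orthogonality you establish) gives \eqref{ape4}--\eqref{ape6} without the full case analysis you sketch; either route is fine.
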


In order to focus on the main topics and the core technique of this paper, we leave out the proofs of Lemma \ref{RSE}, Proposition \ref{ISI} and Theorem \ref{lpd} here, which are standard in \cite{Killip2008}.

\subsection{Virial identity}
In this section, we discuss some Virial-type identity, which plays an important role in the proof Theorem \ref{Sb} and \ref{Bu}.

\begin{proposition}(Virial identity) \label{vi}
Let $\b{\rm u}:=(u^1, u^2, u^3)$ be a solution to the \eqref{NLS system},  for any real valued function $ a \in C^\infty(\mathbb{R}^6) $, defining functions $V_{1}, V_{2}$ by
\begin{align}
V_{1}(t)&=\int_{\mathbb{R}^6} \left(  \kappa_2\kappa_3 \vert u^1 \vert^2 + \kappa_1\kappa_3\vert u^2 \vert^2 +\kappa_1\kappa_2 \vert u^3\vert^2 \right) a(x) {\rm d}x\\
V_{2}(t)&=2 \Im \int_{\mathbb{R}^6} \left(  \overline{u^1} \nabla u^1 + \overline{u^2} \nabla u^2  + \overline{u^3} \nabla u^3\right) \cdot \nabla a(x) {\rm d}x.
\end{align}
Then we have
\begin{align*}
\frac{{\rm d}}{{\rm d}t}V_{1}(t)=&2 \kappa_1\kappa_2\kappa_3 \Im \int_{\mathbb{R}^6} \left(  \overline{u^1} \nabla u^1 + \overline{u^2} \nabla u^2  + \overline{u^3} \nabla u^3\right) \cdot \nabla a(x) {\rm d}x\\
-&2(\kappa_{2}\kappa_{3}+\kappa_{1}\kappa_{3}-\kappa_{1}\kappa_{2})\Im\int_{\R^{6}}\overline{u^{1}u^{2}}u^{3}a(x)\rm dx.\\
\frac{{\rm d}}{{\rm d}t}V_{2}(t)&= \Re \int_{\mathbb{R}^6} \left( 4\kappa_1\overline{u^1_j} u^1_{k} + 4\kappa_2 \overline{u^2_j} u^2_{k}+ 4\kappa_3\overline{u^3_j}u^3_k \right) a_{jk}(x) {\rm d}x \\
&- \int_{\mathbb{R}^6} \left(\kappa_1 \vert u^1 \vert^2 + \kappa_2 \vert u^2 \vert^2 +\kappa_3 \vert u^3\vert^2 \right) \Delta \Delta a(x) {\rm d}x \\
&- 2 \Re \int_{\mathbb{R}^6} \overline{u^1u^2} u^3 \Delta a(x) {\rm d}x.
\end{align*}		
\end{proposition}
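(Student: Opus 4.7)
The proof is a direct computation using the three component equations $i\partial_{t} u^{j} + \kappa_{j}\Delta u^{j} = f^{j}(\b{\rm u})$ for $j=1,2,3$. I would handle $V_{1}$ and $V_{2}$ separately.

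For $V_{1}$, start from $\partial_{t}|u^{j}|^{2} = 2\Re(\overline{u^{j}}\,\partial_{t}u^{j})$ and substitute $\partial_{t}u^{j} = i\kappa_{j}\Delta u^{j} - if^{j}$. This gives a dispersive piece $-2\kappa_{j}\Im(\overline{u^{j}}\Delta u^{j}) = -2\kappa_{j}\nabla\!\cdot\!\Im(\overline{u^{j}}\nabla u^{j})$ and a nonlinear piece. A short check shows the nonlinear piece equals $-2\Im(\overline{u^{1}u^{2}}u^{3})$ for $j=1,2$, but $+2\Im(\overline{u^{1}u^{2}}u^{3})$ for $j=3$; the sign flip comes from the fact that $f^{3}=-u^{1}u^{2}$ carries no complex conjugate while $f^{1},f^{2}$ do, so on conjugating one picks up an extra minus relative to the first two. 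Weighting by $\kappa_{2}\kappa_{3}, \kappa_{1}\kappa_{3}, \kappa_{1}\kappa_{2}$ respectively, integrating the divergences against $a$ by parts, and summing yields the gradient term with common factor $2\kappa_{1}\kappa_{2}\kappa_{3}$ and the nonlinear correction with coefficient $-2(\kappa_{2}\kappa_{3}+\kappa_{1}\kappa_{3}-\kappa_{1}\kappa_{2})$, exactly as claimed.

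For $V_{2}$, differentiate componentwise and split into linear and nonlinear contributions. The linear contribution from each $j$ is $2\kappa_{j}\Re\!\int(\overline{u^{j}}\nabla\Delta u^{j}-\Delta\overline{u^{j}}\,\nabla u^{j})\cdot\nabla a\,\mathrm{d}x$. Write the integrand as $\partial_{\ell}(\overline{u^{j}}\partial_{\ell}\partial_{k}u^{j}-\partial_{\ell}\overline{u^{j}}\,\partial_{k}u^{j})\partial_{k}a$; two integrations by parts together with the identity $\Re(\overline{u^{j}}\partial_{\ell}\partial_{k}u^{j})=\tfrac{1}{2}\partial_{\ell}\partial_{k}|u^{j}|^{2}-\Re(\partial_{\ell}\overline{u^{j}}\,\partial_{k}u^{j})$ produce the Hessian quadratic form $4\kappa_{j}\Re(\partial_{\ell}\overline{u^{j}}\,\partial_{k}u^{j})a_{\ell k}$ and the bi-Laplacian piece $-\kappa_{j}|u^{j}|^{2}\Delta^{2}a$. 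The nonlinear contribution is $2\Re\sum_{j}\!\int(\overline{f^{j}}\nabla u^{j}-\overline{u^{j}}\nabla f^{j})\cdot\nabla a\,\mathrm{d}x$. Expanding the six resulting terms and using $f^{3}=-u^{1}u^{2}$ versus $f^{1},f^{2}$ with conjugates, two pairs of the form $\overline{u^{3}}u^{2}\nabla u^{1}-u^{2}\overline{u^{3}}\nabla u^{1}$ vanish identically, and the remainder assembles into the perfect gradient $\nabla(\overline{u^{1}u^{2}}u^{3})=\overline{u^{2}}u^{3}\nabla\overline{u^{1}}+\overline{u^{1}}u^{3}\nabla\overline{u^{2}}+\overline{u^{1}}\,\overline{u^{2}}\nabla u^{3}$. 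One final integration by parts against $\nabla a$ gives the desired $-2\Re\!\int\overline{u^{1}u^{2}}u^{3}\,\Delta a\,\mathrm{d}x$.

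The main obstacle is bookkeeping, not analysis: because the nonlinearity is asymmetric across the three equations (only $f^{1},f^{2}$ contain a conjugate), every $\Re(i\,\cdot\,)$ sign must be tracked individually, and one must spot the algebraic collapse $\sum_{j}(\overline{f^{j}}\nabla u^{j}-\overline{u^{j}}\nabla f^{j})=\nabla(\overline{u^{1}u^{2}}u^{3})$. The weights $\kappa_{2}\kappa_{3}, \kappa_{1}\kappa_{3}, \kappa_{1}\kappa_{2}$ in $V_{1}$ are chosen precisely so that the dispersive divergences collapse into a single multiple of $\nabla a$ with prefactor $\kappa_{1}\kappa_{2}\kappa_{3}$; the residual coefficient $\kappa_{2}\kappa_{3}+\kappa_{1}\kappa_{3}-\kappa_{1}\kappa_{2}$ then vanishes exactly under the mass-resonance condition $1/\kappa_{3}=1/\kappa_{1}+1/\kappa_{2}$, which is the conceptual reason this weighted virial will be the right quantity to combine with the momentum in the later rigidity argument.
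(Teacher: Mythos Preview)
Your proof is correct and complete. The paper itself does not supply a proof of this proposition; it is stated as a standard virial identity to be obtained by direct computation, which is exactly what you do: differentiate, substitute the three component equations, track the $\Re(i\cdot)$ signs, integrate the divergence terms by parts, and recognize the algebraic collapse $\sum_{j}(\overline{f^{j}}\nabla u^{j}-\overline{u^{j}}\nabla f^{j})=\nabla(\overline{u^{1}u^{2}}u^{3})$. Your remark on why the weights $\kappa_{2}\kappa_{3},\kappa_{1}\kappa_{3},\kappa_{1}\kappa_{2}$ are chosen and how the residual coefficient vanishes under mass-resonance is also accurate and matches the paper's subsequent use of the identity.
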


Assuming the \eqref{NLS system} satisfies the mass-resonance and  set $a(x)=\vert x \vert^2$, a simple calculation shows that
\[
a_j(x):=\partial_{x_j}a = 2 x_j, \qquad \nabla a(x) = 2 x,
\]
and
\[
a_{jk}(x):=\partial_{x_i}\partial_{x_j}a = 2 \delta_{jk}, \qquad \Delta a(x) = 2d, \qquad \Delta \Delta a(x) = 0.
\]
Directly computing, we have the following identity
\begin{align}
V_{1}^{\prime\prime}(t) = 8\kappa_1\kappa_2\kappa_3 [ 2 K(\b{\rm u}) - 3 V(\b{\rm u}) ].
\label{I''}
\end{align}

\section{Variational Analysis}\label{VA}
In this section, we will establish the analysis theory of the ground state and use this to give the energy trapping about the ground state.

\subsection{Ground state}\label{ground}
The existence of ground states has been shown in \cite{Meng2021, Meng2023, Noguera2020} for $ 1 \le d \le 5 $ and in \cite{Noguera2022} for $ d = 6 $.
For the sake of completeness, we still give the proof of existence of ground state in energy-critical case $ d = 6 $ here.
We first recall some basic properties of ground state for the \eqref{NLS system}, which is similar to the classical \eqref{NLS}. Then, we use it to get the sharp Gargliardo-Nirenberg inequality.

\begin{proposition} \label{ground state}
	We define the nonnegative functional $J(\b{\rm X}) := \left[ K(\b{\rm X}) \right]^3 \left[ V(\b{\rm X}) \right]^{-2}$ on $ \Xi :=\left\{ \b{\rm X} \in {\rm\dot H}^1(\R^6)  ~ \big\vert ~ V(\b{\rm X}) \neq 0 \right\} $.  Then there exists non-negative radial function $ \b{\rm W}=(\phi_{1},\phi_{2},\phi_{3})^{T} \in \Xi $ that minimize the function $J(\b{\rm X})$. Furthermore, the minimizer $\b{\rm \Phi}$ is the solution of \eqref{gs}. Besides, we use the notation $\mathcal{G}$  to denote all the non-negative radial solutions.	
\end{proposition}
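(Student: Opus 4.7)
My plan is to prove a sharp Gagliardo--Nirenberg--type inequality $|V(\b{\rm X})| \le C K(\b{\rm X})^{3/2}$ with an explicit constant and then exhibit an explicit minimizer built from the scalar Aubin--Talenti extremal that saturates the inequality. First I would derive the lower bound for $J$ by chaining four classical inequalities. Write $\b{\rm X} = (u^1,u^2,u^3)^T$. Then: (i) $|V(\b{\rm X})| \le \int_{\R^6}|u^1||u^2||u^3|\,{\rm d}x$ by phase alignment; (ii) the three-factor H\"older inequality gives $\int |u^1||u^2||u^3|\,{\rm d}x \le \prod_{i=1}^3 \|u^i\|_{L^3}$; (iii) the critical Sobolev embedding ${\dot H}^1(\R^6)\hookrightarrow L^3(\R^6)$ gives $\|u^i\|_{L^3} \le C_S\|\nabla u^i\|_{L^2}$; and (iv) setting $a_i := \sqrt{\kappa_i}\|u^i\|_{{\dot H}^1}$, AM--GM yields $a_1 a_2 a_3 \le \bigl(\tfrac{a_1^2+a_2^2+a_3^2}{3}\bigr)^{3/2} = \bigl(\tfrac{2K(\b{\rm X})}{3}\bigr)^{3/2}$. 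Concatenating gives $|V(\b{\rm X})| \le \tfrac{C_S^3}{\sqrt{\kappa_1\kappa_2\kappa_3}}\bigl(\tfrac{2K(\b{\rm X})}{3}\bigr)^{3/2}$, hence $J(\b{\rm X}) \ge \tfrac{27\kappa_1\kappa_2\kappa_3}{8 C_S^6}$ uniformly on $\Xi$.

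Next I would produce an explicit minimizer saturating the chain. Let $\phi_0$ be the positive radial Aubin--Talenti extremal of the scalar critical Sobolev embedding in $\R^6$, rescaled so as to solve \eqref{6}, i.e.\ $-\sqrt{\kappa_1\kappa_2\kappa_3}\Delta\phi_0 = \phi_0^2$; existence, positivity, and radiality of $\phi_0$ are classical for this scalar problem. Define $\b{\rm W} := (\phi_0/\sqrt{\kappa_1},\ \phi_0/\sqrt{\kappa_2},\ \phi_0/\sqrt{\kappa_3})^T$. A direct substitution shows $\b{\rm W}$ solves \eqref{gs}. Plugging $\b{\rm W}$ into the four-step chain, every inequality saturates: the components are real and non-negative (case (i)); they are pairwise proportional, so $|u^i|^3$ are proportional and H\"older becomes equality (case (ii)); $\phi_0$ is a Sobolev extremal (case (iii)); and the weighted norms coincide, $a_i = \sqrt{\kappa_i}\cdot\|\phi_0\|_{{\dot H}^1}/\sqrt{\kappa_i} = \|\phi_0\|_{{\dot H}^1}$, so AM--GM is an equality (case (iv)). Therefore $J(\b{\rm W}) = \tfrac{27\kappa_1\kappa_2\kappa_3}{8 C_S^6}$, matching the lower bound, and $\b{\rm W}$ is a non-negative radial minimizer of $J$ on $\Xi$.

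To characterize minimizers as solutions of \eqref{gs}, I would run the Lagrange multiplier computation on $K$ constrained by $V=\mathrm{const}$. For a minimizer $\b{\rm X}$ this yields
\[
-\kappa_1 \Delta u^1 = \mu\, \overline{u^2}\, u^3, \qquad -\kappa_2 \Delta u^2 = \mu\, \overline{u^1}\, u^3, \qquad -\kappa_3 \Delta u^3 = \mu\, u^1 u^2,
\]
with a common multiplier $\mu$. Pairing each equation with $u^i$ and summing yields $2K(\b{\rm X}) = 3\mu V(\b{\rm X})$, so $\mu = 2K/(3V) > 0$ at our minimizer. Using the scale invariance $\b{\rm X}(x) \mapsto \lambda^2 \b{\rm X}(\lambda x)$, which preserves $J$, we may rescale to $\mu = 1$, recovering precisely \eqref{gs}. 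Non-negativity and radial symmetry can be imposed along the way because Schwarz symmetric rearrangement does not increase each $\|\nabla u^i\|_{L^2}$ (P\'olya--Szeg\H{o}) and does not decrease $\int|u^1||u^2||u^3|\,{\rm d}x$ (rearrangement inequality), so every step of the chain is compatible with replacing the components by their non-negative radial rearrangements.

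The main obstacle is the equality-case analysis: one must verify that equality in the three-factor H\"older step forces the three components of a minimizer to be pairwise proportional (so that the scalar reduction to \eqref{6} is justified), and invoke the classical uniqueness (up to scaling and translation) of the Aubin--Talenti extremal as the sole saturator of the critical scalar Sobolev embedding in $\R^6$. Once these two classical facts are in hand, both existence of $\b{\rm W}$ and its identification with the ground state of the elliptic system \eqref{gs} follow from the chain saturation and the Lagrange-multiplier computation outlined above.
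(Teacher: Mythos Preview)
Your proof is correct and takes a genuinely different route from the paper's. The paper establishes existence of a minimizer by a compactness argument: it takes a minimizing sequence, uses Schwarz rearrangement to reduce to non-negative radial functions, normalizes via the scaling invariance of $J$, and then invokes the $\dot{\rm H}^1$ profile decomposition (Theorem~\ref{Profile-K}) together with the strict subadditivity of $K^{3/2}$ to rule out splitting and extract a strong $\dot{\rm H}^1$ limit; only afterwards does it derive the Euler--Lagrange system and rescale to obtain \eqref{gs}. You instead bypass compactness entirely: you bound $|V(\b{\rm X})|$ above by chaining phase alignment, three-factor H\"older, the critical Sobolev embedding $\dot H^1(\R^6)\hookrightarrow L^3(\R^6)$, and AM--GM, obtaining an explicit lower bound $J\ge 27\kappa_1\kappa_2\kappa_3/(8C_S^6)$, and then produce an explicit saturator $\b{\rm W}=(\phi_0/\sqrt{\kappa_1},\phi_0/\sqrt{\kappa_2},\phi_0/\sqrt{\kappa_3})^T$ built from the scalar Aubin--Talenti extremal. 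Your Euler--Lagrange step is essentially the same as the paper's Step~2.

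What each approach buys: your argument is shorter and more elementary --- it avoids the profile decomposition machinery and simultaneously yields the explicit form of the ground state, which the paper only obtains in the proposition following this one by appealing to \cite{Hayashi2011}. The price is that you import the classical (but nontrivial) existence and extremal characterization of the Aubin--Talenti bubble for the scalar Sobolev inequality in $\R^6$. The paper's approach is more self-contained in that respect and would generalize more readily to nonlinearities where no explicit scalar extremal is available.
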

\begin{proof}
	{\bf Step 1.}  We show that the minimum of $J(\b{\rm X})$ can be attained. Suppose that the non-zero function sequence $\{\mathbf{X}_n\}$ is the minimal sequence of the functional $J$, i.e.
	$$
	\lim_{n\to\infty} J(\mathbf{X}_n)=\inf\{J(\mathbf{X}): \mathbf{X}\in \Xi\}.
	$$
	By the Schwartz symmetrical rearrangement lemma, without loss of generality, we assume $\mathbf{X}_n$ are non-negative radial.
	
	Note that for any $ \mathbf{X}\in \Xi$, $ \mu > 0 $, and $ \nu > 0 $, we have
	\begin{equation}\label{mhr}
	\left\{
	\begin{aligned}
	& K(\mu\mathbf{X}(\nu ~ \cdot ~)) = \mu^2 \nu^{-4} K(\mathbf{X}), \\
	& V(\mu\mathbf{X}(\nu ~ \cdot ~)) = \mu^3 \nu^{-6} V(\mathbf{X}).
	\end{aligned}
	\right.
	\end{equation}
	By the definition of $ J(\mathbf{X}) $, it is easy to see $ J(\mu\mathbf{X}(\nu ~ \cdot ~)) = J(\mathbf{X}) $. Denote
	$$
	\mathbf{v}_n(x) = \mu_0 \mathbf{X}_n(\nu_0 x), \quad \text{for any} ~ \mu_0 = [K(\mathbf{X}_n)]^{x} ~ \text{and} ~ \nu_0 = [K(\mathbf{X}_n)]^{y},
	$$
	as long as the real pair $ (x, y) $ satisfies $ 2x - 4y + 1 = 0 $.
	On one hand, the kinetic energy of $ \mathbf{v}_n $ have been unitized, i.e.
	$$
	K(\mathbf{v}_n) \equiv 1.
	$$
	On the other hand, $ \mathbf{v}_n $ also retain non-negative radial at the same tine and satisfies
	$ J(\mathbf{v}_n)=J(\mathbf{X}_n) $, which implies
	$$
	\lim_{n\to\infty} J(\mathbf{v}_n)=\inf\{J(\mathbf{u}): \mathbf{u} \in \Xi \}=J_{\min}.
	$$
	
	We observe that $ \{ \mathbf{v}_n \}_{n=1}^\infty \subset {\rm\dot H}^1_{\rm rad}(\R^6) $ is bounded sequence. So according to Theorem \ref{Profile-K},  we get
	$$V(\b{\rm v}_{n})=\sum_{j=1}^{J}V(T_{\lambda_{n}^{j}}\b{\rm\phi}^j)+V(\b{\rm r}_n^J)\leq J_{\min}^{-\frac12}\Big[\sum_{j=1}^{J}K(\b{\rm \phi}^{j})^{\frac32}+K(\b{\rm r}_{n}^{J})^{\frac32}\Big].$$
	If $J^{*}>1$, by Theorem \ref{Profile-K}, we can obtain that
	$$|V(\b{\rm v}_{n})|< J_{\min}^{-\frac12}$$
	which contradict to the definition of $J_{\min}$. Thus we have $J^{*}=1$ and
	$$\mathcal{T}_{\lambda_{n}^{1}}^{-1}\b{\rm v}_{n}\to\b{\rm v}^{\star}\ \ as\ \ n\to\infty\ \ in\ \ \rm\dot H^{1}.$$
	Hence $J_{\min}=J(\b{\rm v}^{\star})$.
	By above argument, we show that the minimal of $ J $ are attained at $ \mathbf{v}^\star $.

{\bf Step 2.}
We consider the variational derivatives of $ K $ and $ V $: fix $ \b{\rm v}^{*}=(\Phi_1, \Phi_2, \Phi_3)^T $, for any $ \mathbf{w}=(w_1, w_2, w_3)^T \in {\rm\dot H}^1(\R^6) $,
\begin{align*}
\left.\frac{{\rm d}}{{\rm d}h}\right|_{h=0} K(\b{\rm v}^{*} + h \mathbf{w})
= & ~ \Re \int_{\R^6} (-\kappa_1\Delta\Phi_1, -\kappa_2\Delta\Phi_2, -\kappa_3\Delta\Phi_3 ) \cdot \overline{(w_1, w_2, w_3)} {\rm d}x \\
\left.\frac{{\rm d}}{{\rm d}h}\right|_{h=0} V(\b{\rm v}^{*} + h \mathbf{w})
= & ~ \Re \int_{\R^6} (\bar{\Phi}_2\Phi_3, \bar{\Phi}_1\Phi_3, \Phi_1\Phi_2) \cdot \overline{(w_1, w_2, w_3)} {\rm d}x,
\end{align*}
Using the extremum principle, we have, for any $ \mathbf{w} = (w_1, w_2, w_3)^T \in \dot{\rm H}^1(\R^6) $,
$$
\left.\frac{{\rm d}}{{\rm d}h}\right|_{h=0}J(\b{\rm v}^{*}+h\mathbf{w})=0.
$$
A direct calculation show that
\begin{align*}
0 = & ~ \left.\frac{{\rm d}}{{\rm d}h}\right|_{h=0}\left( [K(\b{\rm v}^{*}+h\mathbf{w})]^3 [V(\b{\rm v}*+h\mathbf{w})]^{-2} \right) \\
= & ~ 3 \left( K(\b{\rm v}^{*})]^2 [V(\b{\rm v}^{*})]^{-2} \right) \left(\Re \int_{\R^6} (-\kappa_1\Delta\Phi_1, -\kappa_2\Delta\Phi_2, -\kappa_3\Delta\Phi_3 ) \cdot \overline{(w_1, w_2, w_3)} {\rm d}x \right) \\
& ~ -2 \left( [K(\b{\rm v}^{*})]^3 [V(\b{\rm v}^{*})]^{-3} \right) \left(\Re \int_{\R^6} (\Phi_2\Phi_3, \Phi_1\Phi_3, \Phi_1\Phi_2) \cdot \overline{(w_1, w_2, w_3)} {\rm d}x \right).
\end{align*}
Multiplying both sides of the above formula by $  [K(\b{\rm v}^*)]^{-3} [V(\b{\rm v}^*)]^2 $, we find
$$
\begin{aligned}
0 = & ~ 3[K(\b{\rm v}^{*})]^{-1}\Re \int_{\R^6} (-\kappa_1\Delta\Phi_1, -\kappa_2\Delta\Phi_2, -\kappa_3\Delta\Phi_3 ) \cdot \overline{(w_1, w_2, w_3)} {\rm d}x\\
& ~ -2[V(\b{\rm v}^{*})]^{-1}\Re \int_{\R^6} (\Phi_2\Phi_3, \Phi_1\Phi_3, \Phi_1\Phi_2) \cdot \overline{(w_1, w_2, w_3)} {\rm d}x.
\end{aligned}
$$
By the arbitrariness of $ \mathbf{w} = (w_1, w_2, w_3)^T \in \C^2 $, we have
\begin{equation}
\left\{
\begin{aligned}
&-3[K(\Phi_1, \Phi_2, \Phi_3)]^{-1} \kappa_1 \Delta\Phi^1 - 2[V(\Phi_1, \Phi_2, \Phi_3)]^{-1} \Phi_2 \Phi_3 = 0, \\
&-3[K(\Phi_1, \Phi_2, \Phi_3)]^{-1} \kappa_2 \Delta\Phi^2 - 2[V(\Phi_1, \Phi_2, \Phi_3)]^{-1} \Phi_1 \Phi_3 = 0, \\
&-3[K(\Phi_1, \Phi_2, \Phi_3)]^{-1} \kappa_3 \Delta\Phi^3 - 2[V(\Phi_1, \Phi_2, \Phi_3)]^{-1} \Phi_1 \Phi_2 = 0.
\end{aligned}
\right.
\label{PP}
\end{equation}
And we will find that \eqref{PP} is equivalent to
\begin{equation}
\left\{
\begin{aligned}
&-\kappa_1\Delta\Phi_1 = \alpha \Phi_2\Phi_3, \\
&-\kappa_2\Delta\Phi_2 = \alpha \Phi_1\Phi_3, \\
&-\kappa_3\Delta\Phi_3 = \alpha \Phi_1\Phi_2,
\end{aligned}
\right.
\label{complex}
\end{equation}
if we denote $ \alpha := \frac{3K(\Phi_1, \Phi_2, \Phi_3)}{2V(\Phi_1, \Phi_2, \Phi_3)} $ to be a real constant.
For the real functions
\begin{equation}
(\phi_1(x), \phi_2 (x), \phi_3 (x)) := (\alpha^a  \Phi_1(\alpha^b x) , \alpha^a  \Phi_2(\alpha^b x), \alpha^a  \Phi_3(\alpha^b x)),
\label{scale}
\end{equation}
where $ a - 2b - 1 = 0 $, we have
$$
\left\{
\begin{aligned}
&-\kappa_1\Delta\phi_1 = \phi_2\phi_3, \\
&-\kappa_2\Delta\phi_2 = \phi_1\phi_3,\\
&-\kappa_3\Delta\phi_3 = \phi_1\phi_2.
\end{aligned}
\right.
\qquad x \in \R^6.
$$
Thus, we complete the proof. Generally, the non-negative radial minimizer $ \mathbf{W}(x) = (\phi_1(x), \phi_2(x), \phi_3(x))^T \ne \b{\rm 0} $ is called the \emph{ground state}.
\end{proof}

\begin{remark}
We give a different proof than that of \cite{Noguera2022} where the authors use the concentrate-compactness principle to illustrate the attainment of minimizer of $ J $.
Here, we use lower semicontinuity in the way of 2a.
\end{remark}

Now, we have the existence but no uniqueness of ground state, interested readers can find the difficulties in \cite{Hayashi2011} and \cite{Hayashi2022}.
Although the following results can be deduced by usual approaches, we state and prove them here.

\begin{proposition}\label{KV}
For the kinetic and potential energy of the ground state  $ \b{\rm W} := (\phi_1, \phi_2, \phi_3)^T \in \mathcal{G} $, we have $ K(\b{\rm W}) : V(\b{\rm W}) = 3 : 2.$
\end{proposition}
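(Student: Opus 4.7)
The plan is to use a Nehari-type identity obtained by pairing each equation of the elliptic system \eqref{gs} with the corresponding component of the ground state and integrating by parts. This replaces each kinetic term by a common trilinear integral, after which the $3:2$ ratio falls out by simple arithmetic.

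Concretely, I would first test the $i$-th equation of \eqref{gs} against $\phi^i$ itself. For the first equation, multiplying $-\kappa_1 \Delta\phi^1 = \phi^2\phi^3$ by $\phi^1$ and integrating over $\mathbb{R}^6$ yields
\begin{equation*}
\kappa_1 \|\phi^1\|_{\dot H^1}^2 = \int_{\mathbb{R}^6} \phi^1\phi^2\phi^3\,\mathrm{d}x,
\end{equation*}
and the analogous manipulation for the second and third equations gives
\begin{equation*}
\kappa_2 \|\phi^2\|_{\dot H^1}^2 = \int_{\mathbb{R}^6} \phi^1\phi^2\phi^3\,\mathrm{d}x, \qquad \kappa_3 \|\phi^3\|_{\dot H^1}^2 = \int_{\mathbb{R}^6} \phi^1\phi^2\phi^3\,\mathrm{d}x.
\end{equation*}
Since $\mathbf{W}\in\mathcal{G}$ is real and non-negative, the potential energy reduces to $V(\mathbf{W}) = \int_{\mathbb{R}^6}\phi^1\phi^2\phi^3\,\mathrm{d}x$, so the three identities read $\kappa_i \|\phi^i\|_{\dot H^1}^2 = V(\mathbf{W})$ for $i=1,2,3$.

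Plugging these into the definition of $K$ then gives
\begin{equation*}
K(\mathbf{W}) = \sum_{i=1}^{3} \tfrac{\kappa_i}{2}\|\phi^i\|_{\dot H^1}^2 = \tfrac{3}{2}\,V(\mathbf{W}),
\end{equation*}
which is exactly the claim $K(\mathbf{W}):V(\mathbf{W}) = 3:2$. The only point that needs a brief justification is the integration by parts, i.e.\ that $\phi^i \nabla \phi^i$ has enough decay at infinity for the boundary terms to vanish; this follows from the standard regularity/decay of the ground state (one can also derive the same identity variationally by differentiating $J(\mu\mathbf{W})$ at $\mu=1$, using the scaling relation \eqref{mhr} that was already exploited in Proposition \ref{ground state}). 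This is really the only mildly non-trivial step; once the Nehari identity is in hand the conclusion is immediate.
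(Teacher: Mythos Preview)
Your proof is correct but follows a different route from the paper. The paper argues via a two-parameter scaling (Pohozaev-type) identity: it writes
\[
E\bigl(\lambda^{\alpha}\mathbf{W}(\lambda^{\beta}\,\cdot\,)\bigr)=\lambda^{2\alpha-4\beta}K(\mathbf{W})-\lambda^{3\alpha-6\beta}V(\mathbf{W}),
\]
differentiates at $\lambda=1$, and uses that this variational derivative vanishes because $\mathbf{W}$ solves \eqref{gs}; the coefficients of the free parameters $\alpha,\beta$ then force $2K(\mathbf{W})-3V(\mathbf{W})=0$. You instead test each equation of \eqref{gs} against its own component and sum, which is the Nehari identity. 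Your approach is more elementary and also yields the finer information $\kappa_i\|\phi^i\|_{\dot H^1}^2=V(\mathbf{W})$ for each $i$ individually, while the paper's scaling machinery is the standard Pohozaev framework that would extend more readily to other homogeneities or produce additional independent identities, but is heavier than needed here. Your parenthetical remark about differentiating $J(\mu\mathbf{W})$ at $\mu=1$ is in fact essentially the one-parameter version of the paper's argument.
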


\begin{proof}
By scaling argument, we can have following identity:
\begin{equation*}
E \left( \lambda^{\alpha} \b{\rm W}(\lambda^\beta ~ \cdot ~) \right) = \lambda^{2\alpha - 4\beta}K(\b{\rm W}) - \lambda^{3\alpha - 6\beta}V(\b{\rm W}), \qquad \forall ~ \lambda \in (0, \infty).
\end{equation*}	
Using Hopf-variational and letting $ \lambda = 1 $, we get
\[
\begin{aligned}
0 = & ~ \Re \int_{\mathbb{R}^6} \left( -\kappa_1\Delta \phi_1 -\phi_2 \phi_3 \right) \cdot \overline{ \left( \frac{\rm d}{{\rm d}\lambda} \Big|_{\lambda=1} \lambda^{\alpha} \phi_1(\lambda^\beta x) \right) } \\
& \quad \quad + \left( -\kappa_2\Delta \phi_2 -\phi_1\phi_3 \right) \cdot \overline{ \left( \frac{\rm d}{{\rm d}\lambda} \Big|_{\lambda=1} \lambda^{\alpha} \phi_2(\lambda^\beta x) \right) }   \\
& \quad \quad + \left( -\kappa_3\Delta \phi_3 -\phi_1\phi_2 \right) \cdot \overline{ \left( \frac{\rm d}{{\rm d}\lambda} \Big|_{\lambda=1} \lambda^{\alpha} \phi_3(\lambda^\beta x) \right) } {\rm d}x \\
= & ~ (2\alpha - 4\beta) K(\b{\rm W}) - (3\alpha - 6\beta) V(\b{\rm W}) \\
= & ~ [2K(\b{\rm W}) - 3V(\b{\rm W})] \alpha - [4K(\b{\rm W}) - 6V(\b{\rm W})] \beta, \quad \forall ~ \alpha \in \mathbb{R}, ~ \beta \in \R.
\end{aligned}
\]	
This yields the desired result.	
\end{proof}

\begin{proposition}(Sharp Sobolev inequality)\label{GN}
For any $ \b{\rm g} \in {\rm\dot H}^1(\mathbb{R}^6) $,
\begin{equation}
V(\b{\rm g}) \le C_{GN} [K(\b{\rm g})]^{\frac32},
\label{GNi}
\end{equation}
where
$ C_{GN} = J_{\min}^{-\frac12} > 0 $ is a constant.
Furthermore, $ V(\b{\rm W}) = C_{GN} [K(\b{\rm W})]^{\frac32} $ for any $ \b{\rm W} \in \mathcal{G} $.	
\end{proposition}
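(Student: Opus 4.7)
The statement is essentially a corollary of Proposition \ref{ground state}, since the sharp constant has already been identified with $J_{\min}^{-1/2}$, and the ground state has been characterized as a minimizer of $J$. The plan is to unpack the definitions and handle the sign of $V$.

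First I would split on the sign of $V(\b{\rm g})$. If $V(\b{\rm g}) \le 0$, the inequality \eqref{GNi} is trivially satisfied because the right-hand side is non-negative. The nontrivial case is $V(\b{\rm g}) > 0$, in which $\b{\rm g} \in \Xi$ and the functional $J(\b{\rm g}) = [K(\b{\rm g})]^3 [V(\b{\rm g})]^{-2}$ is well-defined. By Proposition \ref{ground state}, the infimum
\[
J_{\min} := \inf_{\b{\rm X} \in \Xi} J(\b{\rm X}) > 0
\]
is attained, and in particular $J(\b{\rm g}) \ge J_{\min}$. Rearranging gives
\[
V(\b{\rm g})^2 \le J_{\min}^{-1} \, [K(\b{\rm g})]^3,
\]
and taking square roots yields \eqref{GNi} with $C_{GN} = J_{\min}^{-1/2}$.

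Second, I would verify that $\b{\rm W} \in \mathcal{G}$ saturates the inequality. Since $\b{\rm W}$ minimizes $J$ on $\Xi$, we have $J(\b{\rm W}) = J_{\min}$, which reads $[K(\b{\rm W})]^3 = J_{\min} [V(\b{\rm W})]^2$. Moreover $V(\b{\rm W}) > 0$: from Proposition \ref{KV} one has $K(\b{\rm W}) = \tfrac{3}{2} V(\b{\rm W})$, and since $K(\b{\rm W}) > 0$ (as $\b{\rm W} \not\equiv \b{\rm 0}$), this forces $V(\b{\rm W}) > 0$. Taking positive square roots then gives $V(\b{\rm W}) = C_{GN} [K(\b{\rm W})]^{3/2}$, so equality indeed holds in \eqref{GNi} at any ground state.

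There is essentially no obstacle beyond bookkeeping: all the analytical difficulty (compactness of the minimizing sequence via profile decomposition, attainment of $J_{\min}$, and the explicit identification of the minimizer) is already embedded in Proposition \ref{ground state} and Proposition \ref{KV}. The only point that must be handled carefully is the sign convention for $V$, which is why the argument is split into the two cases $V(\b{\rm g}) \le 0$ and $V(\b{\rm g}) > 0$.
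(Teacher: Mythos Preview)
Your proposal is correct and follows essentially the same approach as the paper: both derive \eqref{GNi} directly from the inequality $J(\b{\rm g}) \ge J_{\min}$ established in Proposition \ref{ground state}, and both obtain equality at $\b{\rm W}$ from the fact that $\b{\rm W}$ is a minimizer of $J$. Your version is in fact slightly more careful in explicitly separating the trivial case $V(\b{\rm g}) \le 0$ and in invoking Proposition \ref{KV} to justify $V(\b{\rm W}) > 0$ before taking square roots, points the paper leaves implicit.
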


\begin{proof}
By Proposition \ref{ground state}, for any $\b{\rm g}\in \Xi $, we obtain that
	\[
	[K(\b{\rm g})]^3 [V(\b{\rm g})]^{-2} = J(\b{\rm g}) \ge J_{\min},
	\]
	which implies the \eqref{GNi}.
And it is easy to check that $ V(\b{\rm W}) = C_{GN} [K(\b{\rm W})]^{\frac32} $ for any $ \b{\rm W} \in \mathcal{G} $.
\end{proof}

\begin{proposition}[Coercivity of energy]\label{coer}
	For $ 0 < \rho < 1 $.
	Given initial data $ \b{\rm u}_0 \in {\rm\dot H}^1(\R^6) $ and assuming $ E(\b{\rm u}_0) \le (1 - \rho) E(\b{\rm W}) $ to \eqref{NLS system}, if $ K(\b{\rm u}_0) < K(\b{\rm W}) $, then there exists $ \rho^{\prime\prime} = \rho^{\prime\prime} (\rho) > 0 $ such that
	\begin{equation}
	2 K(\b{\rm u}) - 3 V(\b{\rm u}) \ge \rho^{\prime\prime} K(\b{\rm u});
	\label{coe}
	\end{equation}
	if $ K(\b{\rm u}_0) > K(\b{\rm W}) $ then there exists $ \tilde\rho^{\prime\prime} = \tilde\rho^{\prime\prime} (\rho) > 0 $ such that
	\begin{equation}
	2 K(\b{\rm u}) - 3 V(\b{\rm u}) \le - \tilde\rho^{\prime\prime} K(\b{\rm u}).
	\label{coe-1}
	\end{equation}
\end{proposition}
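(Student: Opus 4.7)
The plan is to combine conservation of energy with the sharp Gagliardo--Nirenberg inequality from Proposition~\ref{GN} and the explicit ratio from Proposition~\ref{KV}, and then run a standard trapping argument of Kenig--Merle type. Proposition~\ref{KV} gives $V(\b{\rm W}) = \tfrac{2}{3} K(\b{\rm W})$, whence $E(\b{\rm W}) = \tfrac{1}{3} K(\b{\rm W})$; since equality is attained at $\b{\rm W}$ in~\eqref{GNi}, the sharp constant equals $C_{GN} = \tfrac{2}{3} K(\b{\rm W})^{-1/2}$. Writing $y(t) := K(\b{\rm u}(t))$ and $f(y) := y - C_{GN}\, y^{3/2}$, the sharp Sobolev bound together with conservation of energy gives
\[
f(y(t)) \;\le\; K(\b{\rm u}(t)) - V(\b{\rm u}(t)) \;=\; E(\b{\rm u}_0) \;\le\; (1-\rho)\, E(\b{\rm W}).
\]
An elementary calculus check shows $f$ is strictly increasing on $[0,K(\b{\rm W})]$, strictly decreasing on $[K(\b{\rm W}),\infty)$, and attains its unique maximum $E(\b{\rm W})$ at $y=K(\b{\rm W})$.

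Next I would run the trapping step. Because $(1-\rho) E(\b{\rm W}) < \max f$, the sublevel set $\{y\ge 0 : f(y) \le (1-\rho)E(\b{\rm W})\}$ is a disjoint union $[0,y_-(\rho)] \cup [y_+(\rho),\infty)$ with $y_-(\rho) < K(\b{\rm W}) < y_+(\rho)$. Since $t \mapsto K(\b{\rm u}(t))$ is continuous (as $\b{\rm u} \in C^0_t \dot{\rm H}^1$) and can never equal $K(\b{\rm W})$, it is trapped in the connected component containing $y(0)$ throughout the lifespan. Thus, writing $\rho',\tilde\rho' > 0$ for constants depending only on $\rho$,
\[
K(\b{\rm u}(t)) \le (1-\rho') K(\b{\rm W}) \quad \text{if } K(\b{\rm u}_0) < K(\b{\rm W}),
\]
\[
K(\b{\rm u}(t)) \ge (1+\tilde\rho') K(\b{\rm W}) \quad \text{if } K(\b{\rm u}_0) > K(\b{\rm W}).
\]

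For \eqref{coe}, I would plug $V(\b{\rm u}) \le C_{GN}\, K(\b{\rm u})^{3/2}$ directly into $2K - 3V$ to obtain
\[
2K(\b{\rm u}) - 3V(\b{\rm u}) \;\ge\; 2K(\b{\rm u})\!\left(1 - \sqrt{K(\b{\rm u})/K(\b{\rm W})}\right) \;\ge\; 2\bigl(1-\sqrt{1-\rho'}\bigr)\, K(\b{\rm u}),
\]
so $\rho'' := 2(1-\sqrt{1-\rho'}) > 0$ works. For \eqref{coe-1} the sharp Sobolev inequality points the wrong way, so instead I would use $V(\b{\rm u}) = K(\b{\rm u}) - E(\b{\rm u}_0) \ge K(\b{\rm u}) - \tfrac{1-\rho}{3} K(\b{\rm W})$ together with $K(\b{\rm u}) \ge (1+\tilde\rho') K(\b{\rm W})$ to conclude
\[
3V(\b{\rm u}) - 2K(\b{\rm u}) \;\ge\; K(\b{\rm u}) - (1-\rho) K(\b{\rm W}) \;\ge\; \tfrac{\tilde\rho'+\rho}{1+\tilde\rho'}\, K(\b{\rm u}).
\]
The main delicate point is the trapping step: one needs the strict separation $y_-(\rho) < K(\b{\rm W}) < y_+(\rho)$ and the fact that $K(\b{\rm u}(t))$ cannot cross $K(\b{\rm W})$, which follows from continuity together with the strict inequality $E(\b{\rm u}_0) < E(\b{\rm W})$ forbidding $f(K(\b{\rm u}(t))) = E(\b{\rm W})$. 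Once trapping is in hand, both coercivity bounds reduce to the one-line algebra above.
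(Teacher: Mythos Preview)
Your proof is correct and follows essentially the same approach as the paper's own proof: both introduce the function $f(y)=y-C_{GN}\,y^{3/2}$, use the sharp constant $C_{GN}=\tfrac{2}{3}K(\b{\rm W})^{-1/2}$ and energy conservation to obtain the trapping $K(\b{\rm u}(t))\le(1-\rho')K(\b{\rm W})$ or $K(\b{\rm u}(t))\ge(1+\tilde\rho')K(\b{\rm W})$ by continuity, and then derive \eqref{coe} via the sharp Sobolev bound and \eqref{coe-1} via the identity $3V=3K-3E(\b{\rm u}_0)$. Even the explicit constants $\rho''=2(1-\sqrt{1-\rho'})$ and $\tilde\rho''=(\rho+\tilde\rho')/(1+\tilde\rho')$ coincide with those in the paper.
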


\begin{proof}
	 By sharp Sobolev inequality \eqref{GNi}, we have
	\begin{equation}
	E(\b{\rm u}) = K(\b{\rm u}) - V(\b{\rm u}) \ge K(\b{\rm u}) - C_{GN} [K(\b{\rm u})]^{\frac32}.
	\label{f}
	\end{equation}
	Define $f(y) := y - C_{GN} y^{\frac32} $ for $ y = y(t) := K(\b{\rm u}(t)) \ge 0 $.
	Then $ f^\prime(y) = 1 - \frac32 C_{GN} y^{\frac12} $. If $ y \le y_0 := \frac4{9C_{GN}^2} $, we know $ f^\prime(y) \ge 0 $.
	Using Proposition \ref{KV} and \ref{GN}, we get $ C_{GN} = \frac{2}{3[K(\b{\rm W})]^{\frac12}}$ and $y_0 = K(\b{\rm W})$.
Thus, we have
	\begin{equation}
	f_{\max}(y) =  f(K(\b{\rm W})) = K(\b{\rm W}) - C_{GN} [K(\b{\rm W})]^{\frac32} = \frac13 K(\b{\rm W}) = E(\b{\rm W}).
	\label{fmax}
	\end{equation}
	For any time $ t \in I $, according to  \eqref{f}, \eqref{fmax}, and the conversation of energy, we obtain
	\begin{equation*}
	f(K(\b{\rm u}(t))) \le E(\b{\rm u}(t)) = E(\b{\rm u_0}) < E(\b{\rm W}) = f(K(\b{\rm W})),
	\end{equation*}
	which implies $ K(\b{\rm u}(t)) \ne K(\b{\rm W}) $ for any $ t \in I $.
	
	If $ K(\b{\rm u}_0) < K(\b{\rm W}) $, using the continuity of $ f(y(t))$ with respect to the variable $t$,  there exists $ \rho^\prime = \rho^\prime (\rho) > 0 $ such that
	\begin{equation}
	K(\b{\rm u}(t)) \le (1 - \rho^\prime) K(\b{\rm W}), \quad \forall ~ t \in I.
	\label{<}
	\end{equation}
	Similarly, if $ K(\b{\rm u}_0) > K(\b{\rm W}) $, there exists $ \tilde\rho^\prime = \tilde\rho^\prime (\rho) > 0 $ such that
	\begin{equation}
	K(\b{\rm u}(t)) \ge (1 + \tilde\rho^\prime) K(\b{\rm W}), \quad \forall ~ t \in I.
	\label{>}
	\end{equation}
	
	Moreover, we can find  $\rho^{\prime\prime} = \rho^{\prime\prime} (\rho^\prime) > 0 $ and $ \tilde\rho^{\prime\prime} = \tilde\rho^{\prime\prime} (\rho, \tilde\rho^\prime) > 0 $ such that
	\[
	2 - 3 \frac{V(\b{\rm u})}{K(\b{\rm u})} \ge 2 - 3 C_{GN}[K(\b{\rm u})]^{\frac12} = 2 - 2 \left( \frac{K(\b{\rm u})}{K(\b{\rm W})} \right)^{\frac12} \ge 2 \left[ 1 - \left( 1 - \rho^\prime \right)^{\frac12} \right] := \rho^{\prime\prime};
	\]
	and
	\[
	2 - 3 \frac{V(\b{\rm u})}{K(\b{\rm u})} = 3 \frac{E(\b{\rm u}_0)}{K(\b{\rm u})} - 1 \le 3 \frac{(1 - \rho)E(\b{\rm W})}{(1 + \rho^\prime)K(\b{\rm W})} - 1 = \frac{(1 - \rho)}{(1 + \rho^\prime)} - 1 := - \tilde\rho^{\prime\prime}.
	\]
	By the above argument, for $K(\b{\rm u}_{0})<K(\b{\rm W})$, we obtain
	\begin{equation*}
	2 K(\b{\rm u}) - 3 V(\b{\rm u}) \ge \rho^{\prime\prime} K(\b{\rm u});
	\label{coe''}
	\end{equation*}
	and for $K(\b{\rm u}_{0})>K(\b{\rm W})$, we have
	\begin{equation*}
	2 K(\b{\rm u}) - 3 V(\b{\rm u}) \le - \tilde\rho^{\prime\prime} K(\b{\rm u}).
	\label{coe'''}
	\end{equation*}
\end{proof}

\subsection{Energy trapping}\label{et}
Then we want to establish some coercivity estimates, which is useful for us to discuss the dynamics for the \eqref{NLS system}.

\begin{proposition}\label{E>0}
	Given $ \b{\rm v} \in {\rm\dot H}^1(\mathbb{R}^6) $, if $ K(\b{\rm v}) \le K(\b{\rm W}) $, then
	$
	E(\b{\rm v}) \ge 0.
	$
\end{proposition}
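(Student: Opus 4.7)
The plan is to combine the sharp Gagliardo-Nirenberg/Sobolev inequality from Proposition \ref{GN} with the explicit value of the best constant $C_{GN}$, which is pinned down by the Pohozaev-type identity in Proposition \ref{KV}. The key observation is that the function $f(y) = y - C_{GN} y^{3/2}$, already introduced in the proof of Proposition \ref{coer}, is nonnegative precisely for $y$ in the range $[0, K(\b{\rm W})]$, so the hypothesis $K(\b{\rm v}) \le K(\b{\rm W})$ places $K(\b{\rm v})$ exactly in the good region.

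More concretely, I would first apply Proposition \ref{GN} to $\b{\rm v}$ to obtain
$$E(\b{\rm v}) = K(\b{\rm v}) - V(\b{\rm v}) \ge K(\b{\rm v}) - C_{GN}\, [K(\b{\rm v})]^{3/2} = K(\b{\rm v})\left(1 - C_{GN}\,[K(\b{\rm v})]^{1/2}\right).$$
Then I would invoke Proposition \ref{KV}, which gives $V(\b{\rm W}) = \tfrac{2}{3} K(\b{\rm W})$, combined with the equality case $V(\b{\rm W}) = C_{GN}[K(\b{\rm W})]^{3/2}$ from Proposition \ref{GN}, to identify $C_{GN} = \tfrac{2}{3}[K(\b{\rm W})]^{-1/2}$. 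Substituting this value,
$$1 - C_{GN}\,[K(\b{\rm v})]^{1/2} = 1 - \tfrac{2}{3}\left(\tfrac{K(\b{\rm v})}{K(\b{\rm W})}\right)^{1/2}.$$

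Finally, I would use the hypothesis $K(\b{\rm v}) \le K(\b{\rm W})$ to conclude that $\bigl(K(\b{\rm v})/K(\b{\rm W})\bigr)^{1/2} \le 1$, so the parenthetical factor is at least $\tfrac{1}{3} > 0$. Since $K(\b{\rm v}) \ge 0$, this yields $E(\b{\rm v}) \ge \tfrac{1}{3} K(\b{\rm v}) \ge 0$, which is what we wanted. There is no real obstacle here: the proof is a direct algebraic consequence of two results already established in this section, and in fact it is essentially a special case of the computation appearing at the beginning of the proof of Proposition \ref{coer}. The only point worth noting is that the conclusion also gives the sharper quantitative bound $E(\b{\rm v}) \ge \tfrac{1}{3}K(\b{\rm v})$, which may be useful later when extracting coercivity from a profile decomposition.
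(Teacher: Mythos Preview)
Your proof is correct and follows essentially the same approach as the paper: both use the sharp Sobolev inequality $E(\b{\rm v}) \ge K(\b{\rm v}) - C_{GN}[K(\b{\rm v})]^{3/2}$ together with the identification $C_{GN} = \tfrac{2}{3}[K(\b{\rm W})]^{-1/2}$ coming from Propositions~\ref{KV} and~\ref{GN}. The only cosmetic difference is that the paper argues via monotonicity of $f(y)=y-C_{GN}y^{3/2}$ on $[0,K(\b{\rm W})]$, whereas you factor out $K(\b{\rm v})$ directly and thereby obtain the slightly sharper bound $E(\b{\rm v})\ge \tfrac13 K(\b{\rm v})$.
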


\begin{proof}
	Considering $ 0 \le K(\b{\rm v}) \le K(\b{\rm W}) $, we define $ f(y) = y - C_{GN} y^{\frac32} $ for $ 0 \le y \le K(\b{\rm W}) $.
	Then $ f^\prime(y) = 1 - \frac32 C_{GN} y^{\frac12} $.
	If $ y \le y_0 := \frac4{9C_{GN}^2} $, we know $ f^\prime(y) \ge 0 $.
	  We use the fact $ C_{GN} = \frac{V(\b{\rm W})}{[K(\b{\rm W})]^{\frac32}} $ and $ K(\b{\rm W}) : V(\b{\rm W}) = 2 : 3 $, which implies $$ C_{GN} = \frac{2}{3[K(\b{\rm W})]^{\frac12}} \quad \text{and}\quad  y_0 = K(\b{\rm W}) .$$
	
	Observing that $
	E(\b{\rm v}) = K(\b{\rm v}) - V(\b{\rm v}) \ge K(\b{\rm v}) - C_{GN} [K(\b{\rm v})]^{\frac32}.$
	Therefore, we obtain $ E(\b{\rm v}) \ge f(K(\b{\rm v})) \ge 0 $ if $ 0 \le K(\b{\rm v}) \le K(\b{\rm W}) $.
\end{proof}

According to Arithmetic-Geometry Mean-value inequality and Proposition \ref{GN}, we can establish the following energy trapping.
And we start with a new bound of potential energy.

\begin{lemma}\label{Ck}
For any $ \b{\rm v} \in {\rm\dot H}^1(\mathbb{R}^6) $, we have
\[
\vert V(\b{\rm v}) \vert \le C(\kappa_1, \kappa_2, \kappa_3) \left[ K(\b{\rm v}) \right]^{\frac32},
\]
where $ C(\kappa_1, \kappa_2, \kappa_3) = \sqrt{\frac8{27\kappa_1\kappa_2\kappa_3}} $.
\end{lemma}

\begin{proof}
Set $ \b{\rm v} =: (v^1, v^2, v^3)^T $.
By the definition of potential energy, H\"older inequality, and Sobolev embedding $ \dot{H}^1(\R^6) \hookrightarrow L^3(\R^6) $,
\begin{equation*}
\begin{aligned}
\vert V(\b{\rm v}) \vert = & ~ \left\vert \Re \int_{\mathbb{R}^6} \overline{v^1}\overline{v^2}v^3 {\rm d}x \right\vert \\
\le & ~ \int_{\mathbb{R}^6} \left\vert \overline{v^1}\overline{v^2}v^3 \right\vert {\rm d}x \\
\le & ~ \Vert v^1 \Vert_{L^3(\R^6)} \Vert v^2 \Vert_{L^3(\R^6)} \Vert v^3 \Vert_{L^3(\R^6)} \\
\le & ~ \Vert v^1 \Vert_{\dot{H}^1(\R^6)} \Vert v^2 \Vert_{\dot{H}^1(\R^6)} \Vert v^3 \Vert_{\dot{H}^1(\R^6)}.
\end{aligned}
\end{equation*}

To construct the exact kinetic energy, we use Arithmetic-Geometry Mean-Value inequality, i.e.,
\[
\frac1n \sum_{i=1}^n a_i \ge \left( \prod_{i=1}^n a_i \right)^{\frac1n}, \qquad \forall ~ a_i \ge 0, ~ i = 1, 2, \cdots, n.
\]
to get by assigning coefficients
\begin{equation*}
\begin{aligned}
\vert V(\b{\rm v}) \vert^2 \le & ~ \frac8{\kappa_1\kappa_2\kappa_3} \left( \frac{\kappa_1}2\Vert v^1 \Vert_{\dot{H}^1(\R^6)}^2 \right) \times \left( \frac{\kappa_2}2\Vert v^2 \Vert_{\dot{H}^1(\R^6)}^2 \right) \times \left( \frac{\kappa_3}2\Vert v^3 \Vert_{\dot{H}^1(\R^6)}^2 \right) \\
\le & ~ \frac8{\kappa_1\kappa_2\kappa_3} \left( \frac{\frac{\kappa_1}2\Vert v^1 \Vert_{\dot{H}^1(\R^6)}^2 + \frac{\kappa_2}2\Vert v^2 \Vert_{\dot{H}^1(\R^6)}^2 + \frac{\kappa_3}2\Vert v^3 \Vert_{\dot{H}^1(\R^6)}^2}3 \right)^3 \\
= & ~ \frac8{27\kappa_1\kappa_2\kappa_3} [K(\b{\rm v})]^3.
\end{aligned}
\end{equation*}
This completes the proof.
\end{proof}

\begin{remark}
For classical nonlinear Schr\"odinger equations, it is obvious that $ E(u) \lesssim K(u) $.
This is because
\begin{itemize}
  \item for focusing case, we can use the fact that $ V(u) \ge 0 $ to directly get
  \[
  E(u) = K(u) - V(u) \le K(u);
  \]
  \item for defocusing case, by Proposition \ref{GN}, we know
  \[
  E(u) = K(u) + V(u) \le K(u) + C_{GN} [K(u)]^{\frac32} \le [1 + C_{GN} [K(w)]^{\frac12} ] K(u).
  \]
\end{itemize}
It is disappointing that both methods above fail for \eqref{NLS system}, so we have to find a new way.
\end{remark}

\begin{proposition}[Energy Trapping]\label{Energy trapping}
	Let $ \b{\rm u} $ be a solution to \eqref{NLS system} with initial data $ \b{\rm u}_0 $ and maximal life-span $ I_{\max} \ni 0 $.
	If $ E(\b{\rm u}_0) \le (1 - \rho) E(\b{\rm W}) $ and $ K(\b{\rm u}_0) \le (1 - \rho^{\prime}) K(\b{\rm W}) $, then
	\begin{equation}
	K(\b{\rm u}(t)) \sim E(\b{\rm u}(t)), \qquad \forall ~ t \in I_{\max}.
	\label{EH}
	\end{equation}
\end{proposition}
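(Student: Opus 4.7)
The plan is to prove the two-sided equivalence $K(\b{\rm u}(t)) \sim E(\b{\rm u}(t))$ by extracting one inequality from the coercivity estimate of Proposition~\ref{coer} and the other from the sharp Sobolev inequality of Proposition~\ref{GN}. All constants will depend only on $\rho$ and $\rho'$, and both bounds will hold uniformly on $I_{\max}$ by conservation of $E$.

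For the direction $K \lesssim E$, I would invoke Proposition~\ref{coer} directly: under the present hypotheses it produces $\rho'' = \rho''(\rho) > 0$ with $2K(\b{\rm u}(t)) - 3V(\b{\rm u}(t)) \ge \rho'' K(\b{\rm u}(t))$ for all $t \in I_{\max}$, equivalently $V(\b{\rm u}) \le \tfrac{2-\rho''}{3} K(\b{\rm u})$. Substituting into the definition of $E$ yields
\[
E(\b{\rm u}(t)) \;=\; K(\b{\rm u}(t)) - V(\b{\rm u}(t)) \;\ge\; \tfrac{1+\rho''}{3}\,K(\b{\rm u}(t)),
\]
so $K(\b{\rm u}(t)) \le \tfrac{3}{1+\rho''}E(\b{\rm u}(t))$.

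For the reverse bound $E \lesssim K$, I would combine the uniform trapping $K(\b{\rm u}(t)) \le (1-\rho')K(\b{\rm W})$ (inequality~\eqref{<} from the proof of Proposition~\ref{coer}) with the sharp Sobolev inequality applied in the form $|V(\b{\rm u})| \le C_{GN}[K(\b{\rm u})]^{3/2}$; the absolute value is legitimate because $V$ flips sign under $u^1 \mapsto -u^1$ while $K$ is unchanged, so Proposition~\ref{GN} applied to $(-u^1, u^2, u^3)^T$ covers the other sign. Using the identity $C_{GN}[K(\b{\rm W})]^{1/2} = 2/3$, which follows from Propositions~\ref{KV} and~\ref{GN}, this sharpens to
\[
|V(\b{\rm u}(t))| \;\le\; \tfrac{2}{3}(1-\rho')^{1/2} K(\b{\rm u}(t)),
\]
and hence $E(\b{\rm u}(t)) \le \bigl(1 + \tfrac{2}{3}(1-\rho')^{1/2}\bigr) K(\b{\rm u}(t))$. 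Combined with the previous step this gives $K \sim E$ throughout $I_{\max}$.

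No serious obstacle is expected: once Propositions~\ref{coer} and~\ref{GN} are in hand, the proof reduces to algebra. The only minor point worth flagging is the passage from $V \le C_{GN}K^{3/2}$ (as stated) to $|V| \le C_{GN}K^{3/2}$, justified by the sign-flipping symmetry of the trilinear form defining $V$; the positivity of the constant $1 - \tfrac{2}{3}(1-\rho')^{1/2}$ is automatic since $(1-\rho')^{1/2} < 1$ for any $\rho' > 0$.
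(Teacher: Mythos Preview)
Your proposal is correct and follows essentially the same two-step structure as the paper: the lower bound $E \ge \tfrac{1+\rho''}{3}K$ via Proposition~\ref{coer} is identical, and for the upper bound both you and the paper control $|V|$ by $K^{3/2}$ and invoke the trapping $K \le (1-\rho')K(\b{\rm W})$. The only cosmetic difference is that the paper obtains $|V| \le C(\kappa_1,\kappa_2,\kappa_3)K^{3/2}$ via the AM--GM inequality and Sobolev embedding, whereas you use the sharp constant $C_{GN}$ from Proposition~\ref{GN} together with the sign-flip $u^1 \mapsto -u^1$; your route gives a slightly cleaner explicit constant but is otherwise the same argument.
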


\begin{proof}
On the one hand, By Lemma \ref{Ck} and \eqref{<}, we obtain the upper bound estimate for the energy $ E(\b{\rm u})$ as follow:
	\begin{equation*}
	\begin{aligned}
	E(\b{\rm u}(t)) \le & ~ K(\b{\rm u}(t)) + \vert V(\b{\rm u}(t)) \vert \\
	\le & ~ K(\b{\rm u}(t)) + C(\kappa_1, \kappa_2, \kappa_3) \left[ K(\b{\rm u}(t)) \right]^{\frac{3}{2}} \\
	\le & ~ \left( 1 + C(\kappa_1, \kappa_2, \kappa_3) \left[ (1 - \rho^\prime) K(\b{\rm W}) \right]^{\frac12} \right) K(\b{\rm u}(t)).
	\end{aligned}
	\end{equation*}
On the other hand, from the proof of Proposition \ref{coer}, we get the lower bound of $ E(\b{\rm u}) $ as following:
	\begin{equation*}
	\begin{aligned}
	E(\b{\rm u}(t))
	= & ~ \frac{1}{3} K(\b{\rm u}(t)) + \frac{1}{3} \left[ 2K(\b{\rm u}(t)) - 3V(\b{\rm u}(t)) \right] \\
	\ge & ~ \frac{1}{3} K(\b{\rm u}(t)) + \frac{1}{3}\rho^{\prime\prime} K(\b{\rm u}(t)) \\
	= & ~ \frac{1}{3} (1 + \rho^{\prime\prime}) K(\b{\rm u}(t)).
	\end{aligned}
	\end{equation*}
Combining the above two inequalities, we deduce the desired result\eqref{EH}.
\end{proof}

\section{Global Well-Posedness and Scattering}\label{GWPS}
In this section, we will prove the Theorem\ref{Sb} according to the contradiction argument that is concentration compactness/rigidity method.
If Theorem\ref{Sb} fails, we can find a very special type solution called critical solution which is mentioned in Theorem\ref{red}.
By analyzing the frequency scale functions, we clarify the critical solution into three types enemies which are introduced in Proposition \ref{enemies}.
More specifically, we use the Virial-type identity, mass conserved quantity and new physically conserved quantity \eqref{New} to exclude three types of enemies.
This fact shows that this critical solution does not exist essentially, which leads to the scattering theory.

We first give the definition of almost periodic solution and some significant conclusions.

\begin{definition}[Almost periodicity modulo symmetries]\label{almost periodic}
	 A solution $ \b{\rm u} = (u^1, u^2,u^3)^T $ to \eqref{NLS system} is called \emph{almost periodic modulo symmetries} with lifespan $ I \ni 0 $, if there exist functions $ \lambda : I \to \R^+ $, $ x : I \to \R^6 $, and $ C : \R^+ \to \R^+ $ such that
	\[
	\int_{\vert x - x(t) \vert \ge \frac{C(\eta)}{\lambda(t)}} \left(\frac{\kappa_1}{2} \vert \nabla u^1(t, x) \vert^2 + \frac{\kappa_2}{2} \vert \nabla u^2(t, x) \vert^2 +\frac{\kappa_3}{2} \vert \nabla u^3(t, x) \vert^2 \right)  {\rm d}x \le \eta
	\]
	and
	\[
	\int_{\vert \xi \vert \ge C(\eta)\lambda(t)} \left(\frac{\kappa_1}{2} \vert \xi \vert^2 \vert \hat{u^1}(t, \xi) \vert^2 + \frac{\kappa_2}{2} \vert \xi \vert^2 \vert \hat{u^2}(t, \xi) \vert^2 + \frac{\kappa_3}{2} \vert \xi \vert^2 \vert \hat{u^3}(t, \xi) \vert^2 \right) {\rm d}\xi \le \eta
	\]
	for all $ t \in I $ and $ \eta > 0 $.
	We refer to the function $ \lambda $ as the \emph{frequency scale function}, $ x $  as the \emph{spatial center function}, and $ C $ as the \emph{compactness modulus function}.
\end{definition}

By the Arzela--Ascoli Theorem, we can obtain some compactness results as follows:

\begin{lemma}[Compactness in $ L^2 $]\label{compactness in L^2}
A family of functions  denoted by $ \mathcal{F} \subset L^2(\R^d) $ is compact in $ L^2(\R^d) $ if and only if the following conditions hold:
	
(i) there exists $ A > 0 $ such that $ \Vert f \Vert_{L^2(\R^d)} \le A, \quad \forall ~ f \in \mathcal{F} $;
	
(ii) for any $ \varepsilon > 0 $, there exists $ R = R(\varepsilon) > 0 $ such that
\[
\int_{\vert x \vert \ge R} \vert f(x) \vert^2 {\rm d}x + \int_{\vert \xi \vert \ge R} \vert \hat{f}(\xi) \vert^2 {\rm d}\xi < \varepsilon, \qquad \forall ~ f \in \mathcal{F}.
\]
\end{lemma}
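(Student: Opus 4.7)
The plan is to prove both directions of the equivalence, with the nontrivial content concentrated in the sufficiency direction (Fréchet--Kolmogorov type theorem adapted to the $L^2$/Fourier setting).

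For the necessity direction, assume $\mathcal{F}$ is precompact. Uniform boundedness (i) is immediate since precompact sets in a metric space are bounded. For the uniform tail estimate (ii), given $\varepsilon > 0$ cover $\mathcal{F}$ by finitely many $L^2$-balls of radius $\varepsilon/3$ centered at $f_1,\dots,f_N$. For each $f_k$, dominated convergence gives an $R_k$ such that $\int_{|x|\ge R_k}|f_k|^2 + \int_{|\xi|\ge R_k}|\hat f_k|^2 < \varepsilon^2/9$; take $R = \max_k R_k$ and use Plancherel together with the triangle inequality to transfer the tail bound from the centers to the whole family.

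For the sufficiency direction, I would use a standard regularization-plus-truncation argument. Fix $\varepsilon > 0$ and apply (ii) to pick $R$ so that for every $f\in\mathcal{F}$ the pieces $(1-\psi_R)f$ (spatial truncation outside $|x|\le R$) and $(1-\psi_R(D))f$ (Fourier truncation outside $|\xi|\le R$) both have $L^2$ norm smaller than $\varepsilon$. Write each $f \in \mathcal F$ as a sum of a localized--and--low-frequency piece $\tilde f := \chi_{B_R}\, P_{\le R} f$ plus a remainder of $L^2$ norm $O(\varepsilon)$. By Bernstein's inequality (Lemma \ref{Bern}), $\|\tilde f\|_{L^\infty}$ and $\|\nabla \tilde f\|_{L^\infty}$ are bounded uniformly in $f\in\mathcal{F}$ in terms of $A$ and $R$, so $\{\tilde f : f \in \mathcal F\}$ is uniformly bounded and equicontinuous on $B_R$. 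Classical Arzelà--Ascoli then yields a finite $\varepsilon$-net in $C(B_R)$, hence (multiplying by the volume of $B_R$) a finite $C(R,\varepsilon)\varepsilon$-net in $L^2(B_R)$. Combined with the $\varepsilon$-error from the truncation, this produces a finite $O(\varepsilon)$-net for $\mathcal{F}$ in $L^2(\R^d)$. Since $\varepsilon$ is arbitrary, $\mathcal{F}$ is totally bounded, hence precompact.

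The main obstacle is the sufficiency direction, and within it the key technical point is making the passage from $C(B_R)$-closeness to $L^2(\mathbb{R}^d)$-closeness quantitative and uniform in $\mathcal{F}$. The Bernstein bound on $\|\nabla P_{\le R} f\|_{L^\infty}$ via condition (i), together with the two-sided tail control from (ii) (both spatial and Fourier), is exactly what lets one simultaneously compactify in $C(B_R)$ and control the error outside. Once that trade-off is organized, the diagonal/total-boundedness step is routine.
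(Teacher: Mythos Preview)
Your proof is correct. The paper does not actually supply a proof of this lemma: it merely states ``By the Arzel\`a--Ascoli Theorem, we can obtain some compactness results as follow'' and then records the statement, treating it as a standard fact. Your argument fills in precisely the details one would expect behind that one-line attribution---tail control plus frequency truncation to reduce to an equicontinuous family on a compact set, then classical Arzel\`a--Ascoli---so your approach is entirely consistent with the paper's. One cosmetic point: if $\chi_{B_R}$ denotes the sharp indicator, then $\nabla\tilde f$ is not defined across $\partial B_R$; it is cleaner either to use a smooth cutoff or to apply Arzel\`a--Ascoli directly to $\{P_{\le R}f|_{\overline{B_R}}\}$ and only afterwards pass to $L^2(B_R)$. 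This does not affect the substance of your argument.
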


\begin{remark}\label{compactness in H^1}
By Lemma \ref{compactness in L^2}, a family of functions denoted by  $ \mathcal{F}_1 \subset {\dot H}^1(\R^d) $ is compact in $ {\dot H}^1(\R^d) $ if and only if $ \mathcal{F}_1 $ is bounded in $\dot{H}^1$ uniformly and for any $ \eta > 0 $,
there exists a compactness modulus function $ C(\eta) > 0 $ such that
\begin{equation*}
\int_{\vert x \vert \ge C(\eta)/\lambda(t)} \vert \nabla f(x) \vert^2 {\rm d}x + \int_{\vert \xi \vert \ge C(\eta)\lambda(t)} \vert \xi \vert^2 \vert \hat{f}(\xi) \vert^2 {\rm d}\xi < \eta, \qquad \forall ~ f \in \mathcal{F}_1.
\end{equation*}
\end{remark}

Assume that Theorem\ref{Sb} fails, we can prove the existence of critical solution which enjoys the property of almost periodicity modulo symmetries and has the minimal kinetic among all blow-up solutions.
\begin{theorem}[Reduction to almost periodic solutions] \label{red}
	Suppose Theorem \ref{Sb} fails. Then there exists a maximal-lifespan solution $\b{\rm u}_c : I_c \times \R^6 $ such that:
	\begin{align*}
		\sup\limits_{t\in I_c} K(\b{\rm u}_c) < K(\b{\rm W}),
	\end{align*}
$\b{\rm u}_c$ is almost periodic modulo symmetries, and $\b{\rm u}_c$ blows up both forward and backward in time. Moreover, $\b{\rm u}_c$ has minimal kinetic energy among all blow up solutions, that is
\begin{align*}
	\sup\limits_{t\in I} K(\b{\rm u}) \ge 	\sup\limits_{t\in I_c} K(\b{\rm u}_c)
\end{align*}
for all maximal-lifespan solutions $\b{\rm u}_c : I \times \R^6 \to \C^3$ that blows up at least one direction.
\end{theorem}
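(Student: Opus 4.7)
The plan is to adapt the Kenig--Merle concentration-compactness/rigidity scheme to the vector-valued system. First I would define the critical threshold
$$
K_c := \inf\bigl\{\,K_* \;:\; \text{some solution } \b{\rm u} \text{ of \eqref{NLS system} with } E(\b{\rm u}_0) < E(\b{\rm W}),\ \sup_{t} K(\b{\rm u}) \le K_*,\text{ blows up}\bigr\}.
$$
By hypothesis (failure of Theorem~\ref{Sb}) the set on the right is nonempty and the constraint $K(\b{\rm u}_0)<K(\b{\rm W})$ together with Proposition~\ref{Energy trapping} gives $K_c<K(\b{\rm W})$; the small-data statement Theorem~\ref{sd}(iv) gives $K_c>0$. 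I would then pick a minimizing sequence of maximal-lifespan solutions $\b{\rm u}_n : I_n \times \R^6 \to \C^3$, each blowing up in, say, the forward direction, with $\sup_{t\in I_n} K(\b{\rm u}_n) \searrow K_c$. After time translation I may assume $S_{[0,\sup I_n)}(\b{\rm u}_n) = \infty$; in the radial branch spatial translations will be dropped, while under mass resonance the Galilean invariance can be used to kill any residual frequency.

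Next I would apply the linear profile decomposition (Theorem~\ref{lpd}) to $\{\b{\rm u}_n(0)\}$, producing profiles $\b{\rm \phi}^j$ with parameters $(\lambda_n^j,x_n^j,t_n^j)$ obeying the orthogonality \eqref{asy} and the decouplings \eqref{ape1}--\eqref{ape3} and \eqref{jianjin-P}. To each profile I associate a nonlinear profile $\b{\rm v}^j$ (using Theorem~\ref{sd} when $t_n^j\equiv0$ and its scattering statement when $t_n^j\to\pm\infty$). The kinetic/potential decoupling combined with Proposition~\ref{coer} forces $E(\b{\rm v}^j(0)) < E(\b{\rm W})$ and $\sup K(\b{\rm v}^j)\le K_c$ for every $j$, with equality possible for at most one index. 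If more than one profile carried positive kinetic energy, the minimality of $K_c$ would imply each $\b{\rm v}^j$ has finite scattering size, and the approximant
$$
\b{\rm u}_n^J(t) := \sum_{j=1}^J \mathcal{T}_{\lambda_n^j}\!\bigl[\b{\rm v}^j\bigl(t/(\lambda_n^j)^2 + t_n^j\bigr)\bigr] + \verb"S"(t)\b{\rm \Phi}_n^J
$$
would fit into the stability Proposition~\ref{pt}: the orthogonality \eqref{asy} makes the quadratic cross-interactions negligible in the Strichartz dual norms and \eqref{remainder1} handles the linear tail, yielding $S(\b{\rm u}_n)<\infty$ for large $n$, a contradiction. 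Hence exactly one profile survives, and after rescaling and time translation I extract the desired $\b{\rm u}_c$ with $\sup K(\b{\rm u}_c)=K_c$.

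Two-sided blow-up of $\b{\rm u}_c$ follows from the same minimality: if $\b{\rm u}_c$ had finite scattering size in one direction, the waves at that temporal end would lie strictly below the $K_c$ threshold and a further application of the stability argument would bound the full scattering size, contradicting blow-up. For almost periodicity modulo symmetries I would rerun the profile extraction of the previous paragraph on the translated sequence $\{\b{\rm u}_c(\tau_n)\}$ for an arbitrary $\tau_n \in I_c$; since $\b{\rm u}_c$ blows up in both directions from every time, the single-profile conclusion forces, along a subsequence, strong convergence of $\mathcal{T}_{\lambda(\tau_n)}^{-1}\b{\rm u}_c(\tau_n)$ in ${\rm\dot H}^1$ for an appropriate choice of $\lambda(\tau_n)$ and $x(\tau_n)$. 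Together with Lemma~\ref{compactness in L^2} and Remark~\ref{compactness in H^1} this precompactness is exactly Definition~\ref{almost periodic}.

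The hard part will be the Palais--Smale step: the quadratic nonlinearity $\b{\rm f}(\b{\rm u})$ couples the three components via bilinear expressions $\overline{u^i}u^j$, so controlling cross-profile interactions $\mathcal{T}_{\lambda_n^j}[\b{\rm v}^j]\cdot\mathcal{T}_{\lambda_n^k}[\b{\rm v}^k]$ for $j\ne k$ inside $\nabla\b{\rm f}$ requires genuine bilinear estimates driven by \eqref{asy}, rather than a scalar triangle inequality. A secondary technical point is reconciling the three parameter regimes ($t_n^j\equiv0$ versus $t_n^j\to\pm\infty$, and trivial-profile cases) with the mass-resonance or radial restriction, because the embedded nonlinear profiles built by the scattering part of Theorem~\ref{sd}(iii) must themselves belong to the class in which the induction hypothesis on $K_c$ applies.
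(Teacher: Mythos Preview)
Your outline follows the same concentration--compactness scheme as the paper, which in fact isolates the profile-extraction step as a separate Palais--Smale proposition (Proposition~\ref{PS}) and then invokes it twice---once to produce $\b{\rm u}_c$ from a minimizing sequence, and once to establish almost periodicity from the orbit $\{\b{\rm u}_c(\tau_n)\}$. Your sketch essentially unpacks that proposition in place, and the single-profile/stability mechanism you describe is the correct one.

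There is, however, a genuine gap in your argument for two-sided blow-up. You start from a minimizing sequence that blows up only forward, extract $\b{\rm u}_c$, and then assert that if $\b{\rm u}_c$ had finite scattering size in one direction ``the waves at that temporal end would lie strictly below the $K_c$ threshold''. This is not justified: if $\b{\rm u}_c$ scatters backward but blows up forward, it is still a legitimate critical element with $\sup_t K(\b{\rm u}_c(t))=K_c$, and nothing in the minimality forces a strict drop in kinetic energy along the scattering direction (nor would a strict drop at some times help, since only the supremum matters). The stability lemma lets you transfer \emph{finite} forward scattering size from $\b{\rm u}_c$ back to $\b{\rm u}_n$, contradicting $S_{\ge 0}(\b{\rm u}_n)\to\infty$; but you have no analogous input in the backward direction because your minimizing sequence was never assumed to blow up backward.

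The paper closes this gap by a choice made \emph{before} the profile decomposition: for each $n$ one selects $t_n\in I_n$ with $S_{\ge t_n}(\b{\rm u}_n)=S_{\le t_n}(\b{\rm u}_n)$; since $S_{I_n}(\b{\rm u}_n)\to\infty$, both halves diverge, so after translating $t_n$ to $0$ one has simultaneously $S_{\ge 0}(\b{\rm u}_n)\to\infty$ and $S_{\le 0}(\b{\rm u}_n)\to\infty$. Palais--Smale then produces $\b{\rm u}_{c,0}$, and the stability argument applies in \emph{each} time direction separately to force blow-up of $\b{\rm u}_c$ both forward and backward. If you insert this median-time trick at the outset, the rest of your proposal goes through unchanged.
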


We will prove this theorem in subsection 4.1 by nonlinear profiles decomposition and stability lemma. With this in hand,
we can infer the critical solution can only be one of the following three cases via analyzing the frequency scale functions.

\begin{proposition}[Classification of critical solution]\label{enemies}
If Theorem \ref{Sb} fails, then there exists a minimal kinetic energy blow-up solution $ \b{\rm u}_c : I_c \times \R^6 \to \C^2 $ which is almost periodic modulo symmetries with the maximal interval of existence $ I_c $ satisfying
\begin{equation}
S_{I_c} (\b{\rm u_c}) = \infty, \quad \text{and} \quad
\sup_{t \in I_c} K(\b{\rm u}_c(t)) < K(\b{\rm W}).
\label{u_c}
\end{equation}
Furthermore, $ \b{\rm u}_c $ has to be one of the following three cases:
\begin{enumerate}
\item a \emph{blowing-up solution in finite time} if
\[
\vert \inf I_c \vert < \infty, \quad \text{or} \quad \sup I_c < \infty;
\]
\item a \emph{soliton-type solution} if
\[
I_c = \R, \quad \text{and} \quad \lambda(t) = 1, ~ \forall ~ t \in \R;
\]
\item a \emph{low-to-high frequency cascade} if
\[
I_c = \R, \quad \text{and} \quad \inf_{t \in \R} \lambda(t) = 1, \quad \limsup_{t \to +\infty} \lambda(t) = \infty.
\]
\end{enumerate}
\end{proposition}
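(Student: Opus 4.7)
The existence of a minimal kinetic energy blow-up solution $\b{\rm u}_c$ with lifespan $I_c$, almost periodic modulo symmetries with frequency scale $\lambda(t)$ and spatial center $x(t)$, blowing up in both time directions and satisfying $\sup_{t\in I_c}K(\b{\rm u}_c(t))<K(\b{\rm W})$, is furnished by Theorem~\ref{red}. The local theory (Theorem~\ref{sd}) moreover yields a uniform lower bound $\inf_{t\in I_c}K(\b{\rm u}_c(t))\ge\alpha>0$: otherwise the small-data global existence clause would make the solution scatter starting from some time $t_n$, contradicting the blow-up in scattering size. It remains to classify the behaviour of $\lambda$, which is the essence of the trichotomy.

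The first case is immediate: if either $\inf I_c>-\infty$ or $\sup I_c<+\infty$, then by definition $\b{\rm u}_c$ is a finite-time blow-up solution, giving case~(1). Henceforth assume $I_c=\R$. I would then exploit the scaling symmetry $\b{\rm u}(t,x)\mapsto\mu^2\b{\rm u}(\mu^2 t,\mu x)$ of \eqref{NLS system}, which preserves the family of minimal blow-up solutions and transforms the frequency scale function as $\lambda(t)\mapsto\mu^{-1}\lambda(\mu^2 t)$. The first task is to rule out $\inf_t\lambda(t)=0$: if $\lambda(t_n)\to 0$ along some sequence, the almost periodicity from Definition~\ref{almost periodic} concentrates nearly all of the $\rm\dot H^1$-mass of $\b{\rm u}_c(t_n)$ at frequencies $|\xi|\lesssim\lambda(t_n)$. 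Then Bernstein's Lemma~\ref{Bern} forces the kinetic energy of each component to vanish, yielding $K(\b{\rm u}_c(t_n))\to 0$ and contradicting the uniform lower bound. Once $\inf\lambda>0$, rescaling by $\mu=\inf\lambda$ normalizes $\inf_{t\in\R}\lambda(t)=1$.

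Next I would split on whether $\sup_{t\in\R}\lambda(t)$ is finite. If $\sup\lambda\le C_0<\infty$, then the regions $\{|x-x(t)|\ge C(\eta)/\lambda(t)\}$ and $\{|\xi|\ge C(\eta)\lambda(t)\}$ in Definition~\ref{almost periodic} are contained in the analogous sets built from $\lambda\equiv 1$ and compactness modulus $C_0\cdot C(\eta)$; replacing the modulus accordingly, we may take $\lambda(t)\equiv 1$, yielding case~(2). If instead $\sup\lambda=+\infty$, then $\lambda$ is unbounded either as $t\to+\infty$ or as $t\to-\infty$; in the latter case the time-reversal symmetry $\b{\rm u}(t,x)\mapsto\overline{\b{\rm u}(-t,x)}$, which preserves the solution class and all relevant quantities, swaps the two directions. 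Hence we may arrange $\limsup_{t\to+\infty}\lambda(t)=+\infty$ with $\inf\lambda=1$, which is case~(3).

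The main obstacle is the rigorous exclusion of $\inf\lambda=0$ together with the verification that each rescaling, time-reversal, or absorption-into-modulus operation preserves membership in the class of minimal blow-up solutions with the same sharp kinetic threshold $\sup K<K(\b{\rm W})$. The heart of the matter is the interplay of Bernstein's inequality Lemma~\ref{Bern}, the coercivity furnished by Proposition~\ref{coer}, and the small-data clause of Theorem~\ref{sd}, which together rigidify the frequency scale function into one of the three shapes listed.
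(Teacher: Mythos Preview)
Your approach departs from the paper's (and Killip--Visan's) route and contains a genuine gap at the step where you rule out $\inf_{t}\lambda(t)=0$. You argue that if $\lambda(t_n)\to 0$ then almost periodicity concentrates the $\dot H^1$-mass at frequencies $|\xi|\lesssim\lambda(t_n)$, whence Bernstein forces $K(\b{\rm u}_c(t_n))\to 0$. This fails: the relevant Bernstein bound $\|\nabla P_{\le N}u\|_{L^2}\lesssim N\|u\|_{L^2}$ requires an $L^2$ bound on $u$, which you do not possess since the solution lives only in $\dot H^1$. Scaling exhibits the obstruction directly: $u\mapsto\mu^2 u(\mu\,\cdot\,)$ preserves the kinetic energy $K$ while sending the frequency scale to $\mu^{-1}\lambda$, so concentration at arbitrarily low frequencies is fully compatible with kinetic energy bounded away from zero. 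Neither Proposition~\ref{coer} nor the small-data clause of Theorem~\ref{sd} converts a lower bound on $K$ into one on $\lambda$.

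The paper instead follows \cite{Killip2010}: one introduces the oscillation quantity ${\rm osc}(T)$ (measuring how much $\lambda$ can vary over parabolic time windows of length $T\lambda(t_0)^{-2}$) together with the ratio $a(t_0)$, and splits on whether $\lim_{T\to\infty}{\rm osc}(T)$ is finite and whether $\inf_{t_0}a(t_0)$ vanishes. In each branch one rescales around a suitably chosen sequence of times and invokes the Palais--Smale compactness (Proposition~\ref{PS}) to extract a \emph{new} minimal blow-up solution which then enjoys the desired normalisation of $\lambda$. The three enemies are therefore limits of rescalings of $\b{\rm u}_c$, not merely re-parametrisations of it, and this limiting step is precisely what circumvents the scale-invariance obstruction your direct argument runs into. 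Once the $\inf\lambda=0$ issue is correctly handled by such a limiting procedure, the remaining parts of your outline (absorbing a bounded $\lambda$ into the modulus, using time-reversal to place the unboundedness forward in time) are essentially sound.
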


And the critical solution satisfies the reduce Duhamel's formula, which is a useful tool in the proof of negative regularity in Subsection \ref{nr}.

\begin{lemma}[Reduced Duhamel's formula, \cite{Killip2013, Merle1998, Zhang2008}]\label{reduced}
Let $ \b{\rm u} $ be an almost periodic solution to \eqref{NLS system} with maximal-lifespan $ I $.
Then, for all $ t \in I $,
\begin{equation*}
\begin{aligned}
\b{\rm u}(t) & ~ = i \lim_{T \to \sup I} \int_t^T \verb"S"(t-s) \b{\rm f}(\b{\rm u}(s)) {\rm d}s \\
& ~ = -i \lim_{T \to \inf I} \int_T^t \verb"S"(t-s) \b{\rm f}(\b{\rm u}(s)) {\rm d}s,
\end{aligned}
\end{equation*}
as weak limits in $ {{\dot{\rm H}}^1(\R^6)} $.
\end{lemma}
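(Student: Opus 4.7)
The plan is to extract the reduced formula from the standard Duhamel identity by proving that the free-evolution piece $\mathtt{S}(t-T)\b{\rm u}(T)$ tends weakly to zero in ${\dot{\rm H}}^1(\R^6)$ as $T$ approaches either endpoint of $I$. Applying the semigroup property to the Duhamel identity from the Definition of solution gives, for every $t,T\in I$,
\begin{equation*}
\b{\rm u}(t) = \mathtt{S}(t-T)\b{\rm u}(T) + i\int_T^t \mathtt{S}(t-s)\b{\rm f}(\b{\rm u}(s))\,ds,
\end{equation*}
so the forward reduced identity follows once one shows $\mathtt{S}(t-T)\b{\rm u}(T)\rightharpoonup 0$ in ${\dot{\rm H}}^1(\R^6)$ as $T\to\sup I$, and the backward version is obtained by the symmetric argument with $T\to\inf I$.

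To verify the weak convergence, I would test against a generic $\psi\in C_c^\infty(\R^6)^3$, which is dense in the dual space ${\dot{\rm H}}^{-1}$. Unitarity of $\mathtt{S}$ on ${\dot{\rm H}}^1$ rewrites the pairing as $\langle \b{\rm u}(T),\mathtt{S}(T-t)\psi\rangle$, shifting the propagator onto the test function. By the almost periodicity of $\b{\rm u}$, for every $\eta>0$ one can decompose $\b{\rm u}(T)=\b{\rm u}_{\rm main}(T)+\b{\rm r}(T)$ uniformly in $T$, with $\|\b{\rm r}(T)\|_{{\dot{\rm H}}^1}\lesssim \eta$, the main part spatially localized in $B(x(T),C(\eta)/\lambda(T))$ and Fourier supported in the annulus $|\xi|\sim\lambda(T)$ up to factors depending on $\eta$. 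The remainder contributes at most $O(\eta\|\psi\|_{{\dot{\rm H}}^1})$ to the pairing uniformly in $T$.

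Next, I would split the main-part analysis according to the trichotomy of Proposition~\ref{enemies}. Whenever $\lambda(T)\to\infty$ as $T\to\sup I$ (finite-time blow-up and low-to-high cascade cases), the frequency support of $\b{\rm u}_{\rm main}(T)$ escapes to infinity while the frequency support of $\mathtt{S}(T-t)\psi$ remains inside the fixed compact set $\operatorname{supp}\widehat\psi$, since the Schr\"odinger group is a Fourier multiplier and preserves frequency support. Plancherel then drives the inner product to zero. In the soliton-like scenario, $\lambda\equiv 1$ and $\sup I=\infty$, so frequency separation is unavailable; the decay must instead come from the dispersive estimate in Lemma~\ref{disper}, which yields $\|\nabla\mathtt{S}(T-t)\psi\|_{L^\infty}\lesssim (T-t)^{-3}$, paired via H\"older against $\nabla\b{\rm u}_{\rm main}(T)$, whose support has finite volume so that $\|\nabla\b{\rm u}_{\rm main}(T)\|_{L^1}$ is controlled by the uniform ${\dot{\rm H}}^1$ bound from energy trapping (Proposition~\ref{Energy trapping}). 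Sending $\eta\to 0$ then produces the desired weak convergence.

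The hardest step is the soliton-like case: since neither frequency separation nor escape of the spatial center $x(T)$ is automatic, the entire decay must come from the $t^{-3}$ rate of the free propagator acting on a fixed smooth compactly supported test function, and the accompanying H\"older bookkeeping must be carried out against the energy-space pairing, using the uniform spatial localization of $\b{\rm u}$ around $x(T)$ to compensate for the fact that $\|\nabla\b{\rm u}(T)\|_{L^1}$ is not globally available. The backward limit $T\to\inf I$ is handled identically, yielding the second identity in the statement.
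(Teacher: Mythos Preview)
The paper does not supply its own proof of this lemma; it simply cites \cite{Zhang2008}. Your overall strategy---reduce to showing that the linear tail $\mathtt{S}(t-T)\b{\rm u}(T)$ vanishes weakly in ${\rm\dot H}^1$, test against $\psi\in C_c^\infty(\R^6)^3$, and shift the propagator onto $\psi$ by unitarity---is exactly the standard one, and your treatment of the soliton-like case via the dispersive estimate (Lemma~\ref{disper}) together with the uniform spatial localization is correct.

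There is, however, a genuine gap in your case analysis. You organize the argument around Proposition~\ref{enemies}, but that trichotomy is established only for the specific minimal-kinetic-energy critical element $\b{\rm u}_c$, whereas the lemma is stated for an arbitrary almost periodic solution. More seriously, even for $\b{\rm u}_c$ your assertion that $\lambda(T)\to\infty$ in the low-to-high cascade scenario is false: by definition one only has $\limsup_{T\to\infty}\lambda(T)=\infty$ together with $\inf_{t\in\R}\lambda(t)=1$, so there exist sequences $T_n\to\infty$ along which $\lambda(T_n)$ remains bounded, and for these your frequency-separation argument breaks down entirely.

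The repair is simple and in fact lets you dispense with Proposition~\ref{enemies} altogether. Given any sequence $T_n\to\sup I$, pass to a subsequence and split into three alternatives. If $\lambda(T_n)\to\infty$, the frequency support of $\b{\rm u}_{\rm main}(T_n)$ escapes the fixed compact Fourier support of $\psi$, and Plancherel kills the pairing. If $\lambda(T_n)\to 0$, the frequency support of $\b{\rm u}_{\rm main}(T_n)$ shrinks into the origin, and the weight $|\xi|^2$ in the ${\rm\dot H}^1$ pairing against the fixed $\widehat\psi$ forces the integral to zero. If $\lambda(T_n)$ stays bounded above and below, then necessarily $\sup I=\infty$ (otherwise precompactness of the rescaled orbit combined with the local theory and stability would extend $\b{\rm u}$ past $\sup I$, exactly as in the argument for \eqref{lam=infty}), hence $T_n\to\infty$; the spatial localization radius $C(\eta)/\lambda(T_n)$ is then uniformly bounded, and your dispersive argument from the soliton case applies verbatim. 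This unified subsequence analysis covers every almost periodic solution and closes the gap.
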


\subsection{Existence of critical solution}\label{eocs}
In this part, we prove Theorem \ref{red}.
For any $0\le K_0 \le K(\b{\rm W})$, we define
\[
L(K_0):= \sup \left\{ S_I (\b{\rm u}) ~ \Big\vert ~ \b{\rm u} : I \times \R^6 \to \C^2, ~ \sup_{t \in I} K(\b{\rm u}(t)) \le K_0 \right\}
\]
where the supremum is taken over all solutions $ \b{\rm u} : I \times \R^6 \to \C^3 $ to \eqref{NLS system} satisfying $ \sup_{t \in I} K(\b{\rm u}(t)) \le K_0 $. It is not difficult to check that $ L : [0, K(\b{\rm W})] \to [0, \infty] $ is non-decreasing and satisfies $ L(K(\b{\rm W})) = \infty $.
 On the other hand, by the global existence and scattering result of small data, Theorem \ref{sd} , we know
\[
L(K_0) \le C K_0^2, \quad \forall ~  K_0 < \delta_{sd},
\]
where $ \delta_{sd} > 0$ is a threshold in Theorem \ref{sd}. By stability Proposition \ref{pt}, we know $ L $ is a continuous function, which implies there exists a unique critical kinetic energy $ K_c $ such that
\begin{equation}
 L(K_0) \left\{
\begin{aligned}
< \infty, \quad K_0 < K_c, \\
= \infty, \quad K_0 \ge K_c.
\end{aligned}
\right.
\label{K_c}
\end{equation}

In particular, if $ \b{\rm u} : I \times \R^6 \to \C^3 $ is a maximal-lifespan solution to \eqref{NLS system} such that $ \sup_{t \in I} K(\b{\rm u}(t)) < K_c $, then $ \b{\rm u} $ is global and
\[
S_{\R}(\b{\rm u}) \le L \left( \sup_{t \in \R}K(\b{\rm u}(t)) \right) < \infty.
\]
From above argument, the failure of Theorem \ref{Sb} is equivalent to $ 0 < K_c < K(\b{\rm W}) $.

According to the profile decomposition Lemma \ref{lpd} and using the standard process, we can establish Palais-Smale condition modulo symmetries as follow.
\begin{proposition}[Palais-Smale condition modulo symmetries, \cite{Visan2012}]\label{PS}
	Let $ \b{\rm u}_n : I_n \times \R^6 \to \C^3$ be a sequence of solutions to \eqref{NLS system} such that
	\[
	\limsup_{n\to \infty} \sup_{t\in I_n} K(\b{\rm u}_n(t)) = K_c
	\]
	and $ \{ t_n \}_{n=1}^\infty \subset I_n $ be a sequence of time such that
	\begin{equation}
	\lim_{n\to \infty} S_{\ge t_n} (\b{\rm u}_n) = \lim_{n\to \infty} S_{\le t_n} (\b{\rm u}_n) = \infty.
	\label{S_n}
	\end{equation}
	Then there exists a subsequence of $ \{ \b{\rm u}_n(t_n) \}_{n=1}^\infty $  which converges in $ {\rm\dot H}^1_x(\R^6) $ modulo symmetries.
\end{proposition}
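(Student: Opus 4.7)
The strategy is the by-now standard concentration-compactness scheme: apply the linear profile decomposition to the initial data, associate nonlinear profiles, and use minimality of $K_c$ together with stability (Proposition \ref{pt}) to rule out all but the single-profile case.

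First, translate in time so that $t_n = 0$. By the energy trapping (Proposition \ref{Energy trapping}) and the hypothesis $\sup_n K(\b{\rm u}_n(0)) \le K_c < K(\b{\rm W})$, the sequence $\{\b{\rm u}_n(0)\}$ is uniformly bounded in ${\rm\dot H}^1(\R^6)$. Apply Theorem \ref{lpd} to obtain, passing to a subsequence, the decomposition
\[
\b{\rm u}_n(0) = \sum_{j=1}^J \mathcal{T}_{\lambda_n^j}\bigl[\verb"S"(t_n^j)\b{\rm\phi}^j\bigr] + \b{\rm\Phi}_n^J,
\]
with the asymptotic $\dot{\rm H}^1$ decoupling \eqref{ape1}--\eqref{ape3}, the parameter orthogonality \eqref{asy}, and the remainder property \eqref{remainder1}. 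For each $j$, define the associated nonlinear profile $\b{\rm v}^j : I^j \times \R^6 \to \C^3$ to be the maximal-lifespan solution with data $\b{\rm v}^j(0) = \b{\rm\phi}^j$ if $t_n^j \equiv 0$, or with data satisfying $\|\b{\rm v}^j(-t_n^j) - \verb"S"(-t_n^j)\b{\rm\phi}^j\|_{\dot{\rm H}^1} \to 0$ via the wave operator part of Theorem \ref{sd} if $t_n^j \to \pm\infty$. Set $\b{\rm v}_n^j(t,x) := \mathcal{T}_{\lambda_n^j}\bigl[\b{\rm v}^j\bigl(\tfrac{t}{(\lambda_n^j)^2} + t_n^j\bigr)\bigr](x)$.

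Now suppose, for contradiction, either $J^* \ge 2$, or $J^* = 1$ with $\limsup_n \|\b{\rm\Phi}_n^1\|_{\dot{\rm H}^1} > 0$. By the $\dot{\rm H}^1$ decoupling and Proposition \ref{E>0}, every profile satisfies $K(\b{\rm\phi}^j) \le K_c - \delta$ for some $\delta > 0$ uniformly in $j$, so by the definition of $K_c$ each nonlinear profile $\b{\rm v}^j$ is global with $S_\R(\b{\rm v}^j) < \infty$. Form the approximate solution
\[
\tilde{\b{\rm u}}_n^J(t) := \sum_{j=1}^J \b{\rm v}_n^j(t) + \verb"S"(t)\b{\rm\Phi}_n^J.
\]
The plan is to verify the three hypotheses of Proposition \ref{pt}: the $\mathcal{S}(\dot{\rm H}^1)$ bound on $\tilde{\b{\rm u}}_n^J$ (uniform in $n$, $J$), the smallness of $\verb"S"(t)(\b{\rm u}_n(0) - \tilde{\b{\rm u}}_n^J(0))$ (which is exactly $\verb"S"(t)\b{\rm\Phi}_n^J$, controlled by \eqref{remainder1} for $J$ large), and the smallness of the error $\b{\rm e}_n^J := i\partial_t\tilde{\b{\rm u}}_n^J + A\tilde{\b{\rm u}}_n^J - \b{\rm f}(\tilde{\b{\rm u}}_n^J)$ in $\nabla \mathcal{S}'({\rm L}^2)$. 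Applying Proposition \ref{pt} then yields $S_\R(\b{\rm u}_n) \lesssim 1$ for large $n$, contradicting \eqref{S_n}.

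The main obstacle is the error estimate for $\b{\rm e}_n^J$. Because the nonlinearity $\b{\rm f}$ is quadratic, expanding $\b{\rm f}(\tilde{\b{\rm u}}_n^J)$ produces three types of terms: the diagonal pieces $\b{\rm f}(\b{\rm v}_n^j)$ (which cancel against $i\partial_t\b{\rm v}_n^j + A\b{\rm v}_n^j$), cross terms of the form $\overline{v_n^{j,\cdot}} v_n^{l,\cdot}$ with $j \ne l$, and terms involving the linear evolution $\verb"S"(t)\b{\rm\Phi}_n^J$. The cross terms go to zero in the required Strichartz norm by the parameter orthogonality \eqref{asy}: a change of variables $y = (x - x_n^j)/\lambda_n^j$, $s = (t - t_n^j)/(\lambda_n^j)^2$ reduces the analysis to showing that $\b{\rm v}^l$, transported by the $(j,l)$ relative parameters, weak-converges to $\b{\rm 0}$ on any compact region where $\b{\rm v}^j$ lives, which follows from one of the four divergences in \eqref{asy}. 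The terms involving $\b{\rm\Phi}_n^J$ use \eqref{remainder1} and interpolation with the uniform $\dot{\rm H}^1$ bound.

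Once the contradiction rules out the bad cases, we conclude $J^* = 1$ and $\b{\rm\Phi}_n^1 \to \b{\rm 0}$ in $\dot{\rm H}^1$. Finally, the assumption that $S_{\ge 0}(\b{\rm u}_n)$ and $S_{\le 0}(\b{\rm u}_n)$ both diverge forces $t_n^1 \equiv 0$: if $t_n^1 \to +\infty$ then $\b{\rm v}_n^1$ scatters at $+\infty$ and $S_{\ge 0}(\b{\rm u}_n)$ would be uniformly bounded via the same stability argument, and similarly for $-\infty$. Therefore $\b{\rm u}_n(0) = \mathcal{T}_{\lambda_n^1}\b{\rm\phi}^1 + o_{\dot{\rm H}^1}(1)$, which is precisely convergence in ${\rm\dot H}^1(\R^6)$ modulo the scaling and translation symmetries $(\lambda_n^1, x_n^1)$, completing the proof.
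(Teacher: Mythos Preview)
Your proposal is correct and fleshes out precisely the ``standard process'' the paper invokes without detail (the paper gives no proof beyond citing Theorem~\ref{lpd} and \cite{Visan2012}). The one point you should make more explicit is that invoking the definition of $K_c$ requires $\sup_t K(\b{\rm v}^j(t)) < K_c$, not merely $K(\b{\rm\phi}^j) < K_c$; this is where the energy decoupling from \eqref{ape1}--\eqref{ape6}, Proposition~\ref{E>0} (nonnegativity of each decoupled energy), and Proposition~\ref{Energy trapping} actually enter, and your passing reference to Proposition~\ref{E>0} should be expanded into that chain.
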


\begin{proof}[Proof of Theorem  \ref{red}]
Since  Theorem \ref{Sb} fails, so the critical kinetic energy must satisfy $K_c < K(\b{\rm W})$. By the definition of the critical kinetic energy, we can choose a sequence  $ \b{\rm u}_n : I_n \times \R^6 \to \C^3 $ of solutions to \eqref{NLS system} with $ I_n $ compact, such that
\begin{equation}
\sup_{n \in \N} \sup_{t \in I_n} K(\b{\rm u}_n (t)) = K_c, \quad \text{and} \quad \lim_{n \to \infty} S_{I_n}(\b{\rm u}_n) = \infty.
\label{u=K_c}
\end{equation}

Assuming that $t_n \in I_n$ satisfy $S_{\ge t_n}(\b{\rm u}_n)=S_{\le t_n}(\b{\rm u}_n)$, we have
\begin{equation}\label{qq}
	\lim_{n \to \infty} S_{\ge t_n}(\b{\rm u}_n) =\lim_{n \to \infty}  S_{\le t_n}(\b{\rm u}_n)= \infty
\end{equation}
Using translation invariance, we can assume $t_n=0$.
By  Proposition \ref{PS}, we can find a $\mathcal{T}_{\lambda_n} \in \mathcal{G}$, and $\b{\rm u}_{c, 0} \in {\rm\dot H}^1(\R^6)$ such that
\[
\lim_{n \to \infty} \Vert \mathcal{T}_{\lambda_n} \b{\rm u}_n (0) - \b{\rm u}_{c, 0} \Vert_{{\rm\dot H}^1(\R^6)} = 0.
\]
For convenience, we write for
\begin{equation} \label{str}
\lim_{n \to \infty} \Vert  \b{\rm u}_n (0) - \b{\rm u}_{c, 0} \Vert_{{\rm\dot H}^1(\R^6)} = 0.
\end{equation}

Let $ \b{\rm u}_c : I_c \times \R^6 \to \C^2 $ be the maximal-lifespan solution to \eqref{NLS system} with initial data $ \b{\rm u}_c(0) = \b{\rm u}_{c, 0} $.
By \eqref{str}, we can use the stability Proposition \ref{pt} to know $ I_c \subset \liminf I_n $ and
\[
\lim_{n \to \infty} \Vert \b{\rm u}_n - \b{\rm u}_c \Vert_{L_t^\infty(K, {\rm\dot H}_x^1(\R^6))} = 0, \quad \text{for all compact} ~ K \subset I_c.
\]
From \eqref{K_c}, we know

\begin{equation}
\sup_{t \in I} K(\b{\rm u}(t)) \le K_c.
\label{<K_c}
\end{equation}

Next, we show the $\b{\rm u}_c$ blows up both forward and backward in time. Indeed, if $\b{\rm u}_c$ does not blow up forward in time, we can obtain
\begin{equation}
	[0, \infty) \subset I_c, \quad S_{\ge 0} (\b{\rm u}_c) < \infty .
\end{equation}
By stability Proposition \ref{pt}, for sufficiently large $n$, we have
\begin{equation}
	S_{\ge 0} (\b{\rm u}_n) < \infty,
\end{equation}
which is a contradiction with \eqref{qq}.  By similar discussion, we know $\b{\rm u}_c$ also blows up backward in time.
Therefore, by definition of $K_c$,
\begin{equation}
\sup_{t \in I_c} K(\b{\rm u}_c(t)) \ge K_c.
\label{>K_c}
\end{equation}
Combining \eqref{<K_c} and \eqref{>K_c}, we can obtain
\begin{equation}
\sup_{t \in I_c} K(\b{\rm u}_c(t)) = K_c.
\label{=K_c}
\end{equation}

Finally, we prove $\b{\rm u}_c$ is almost periodic modulo symmetries. Considering a sequence $ \{ \tau_n \}_{n=1}^\infty \subset I_c $,
Since the fact that $ \b{\rm u}_c $ blows up in both time directions, we have
\[
S_{\ge \tau_n} (\b{\rm u}_c) = S_{\le \tau_n} (\b{\rm u}_c) = \infty.
\]
According to Proposition \ref{PS}, there exists a subsequence of $ \b{\rm u}_c (\tau_n) $, which is convergent in $ {\rm\dot H}_x^1 (\R^6) $ modulo symmetries.
This fact implies the desired result.
\end{proof}

\begin{remark}{\bf Classification of critical solution.}
We will use the similar argument in \cite{Killip2010} to prove  Proposition \ref{enemies}.

Firstly, we define some notations as follows:
\begin{equation}
	{\rm osc}(T) := \inf_{t_0 \in J} \frac{\sup \{ \lambda(t) ~ \vert ~ t \in J ~ \text{and} ~ \vert t - t_0 \vert \le T [\lambda(t_0)]^{-2} \}}{\inf \{ \lambda(t) ~ \vert ~ t \in J ~ \text{and} ~ \vert t - t_0 \vert \le T [\lambda(t_0)]^{-2} \}} \quad T>0,
\end{equation}
and
\begin{equation}
	 a(t_0) := \frac{\lambda(t_0)}{\sup \{ \lambda(t) ~ \vert ~ t \in J ~ \text{and} ~ t \le t_0 \}} + \frac{\lambda(t_0)}{\sup \{ \lambda(t) ~ \vert ~ t \in J ~ \text{and} ~ t \ge t_0 \}}  \quad t_0 \in J.
\end{equation}

Then, by standard progress, we can classify the minimal-kinetic-energy blow-up solution (critical solution) into three cases.

{\rm (1)}
\[
\lim_{T \to \infty} {\rm ost}(T) = \infty, ~ \inf_{t_0 \in J} a(t_0) = 0, ~ I_c \ne \R;
\] corresponds to the case of finite time blow-up,

{\rm (2)}
\[
\lim_{T \to \infty} {\rm ost}(T) < \infty;
\] corresponds to the case of soliton-type solution,

{\rm(3)}
\[
\lim_{T \to \infty} {\rm ost}(T) = \infty, ~ \inf_{t_0 \in J} a(t_0) = 0, ~ I_c = \R
\]
or
\[
\lim_{T \to \infty} osc(T) = \infty, ~ \inf_{t_0 \in J} a(t_0) > 0
\] corresponds to the case of low-to-high frequency cascade.
For more details, one can refer to \cite{Killip2010}.
\end{remark}

\subsection{Finite time blow-up}\label{FF}
In this part, we recall Virial identity and new physically conserved quantity and use them to exclude the finite time blow-up solution.
\begin{theorem}
There are no critical solution in the sense of  Proposition \ref{enemies} that is blow-up finite time.
\end{theorem}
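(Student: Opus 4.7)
The plan is to exclude the finite-time blow-up enemy via conservation of mass, by upgrading the reduced Duhamel formula from weak convergence in ${\rm\dot H}^1$ to strong convergence in ${\rm L}^2$. Once $\b{\rm u}_c(t)\in {\rm L}^2$ with $\|\b{\rm u}_c(t)\|_{{\rm L}^2}\to 0$ as $t\to T^*$ is established, mass conservation will force $\b{\rm u}_c(0)\equiv\b{\rm 0}$, contradicting the fact that $\b{\rm u}_c$ blows up. Assume for contradiction that $T^*:=\sup I_c<\infty$; the case $\inf I_c>-\infty$ is completely symmetric.

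First I would use the Sobolev embedding ${\rm\dot H}^1(\R^6)\hookrightarrow {\rm L}^3(\R^6)$ together with the uniform bound $\sup_{t\in I_c}K(\b{\rm u}_c(t))<K(\b{\rm W})$ to get $\|\b{\rm u}_c(t)\|_{{\rm L}^3}\lesssim K(\b{\rm W})^{1/2}$ at every time. A H\"older estimate in time then yields the crucial gain
\[
\|\b{\rm u}_c\|_{L_t^4([T_1,T_2];{\rm L}^3)}\lesssim (T_2-T_1)^{1/4}
\]
on any subinterval $[T_1,T_2]\subset I_c$; this will drive the inhomogeneous Strichartz estimate in the next step.

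Next I would apply the inhomogeneous Strichartz estimate at the ${\rm L}^2$ level (Theorem \ref{strichartz}) using the admissible pair $(q,r)=(2,3)\in\Lambda_0$ (since $2/q+6/r=3$) paired on the dual side with $(q',r')=(2,3/2)$. The pointwise control $|\b{\rm f}(\b{\rm u}_c)|\lesssim |\b{\rm u}_c|^2$ combined with H\"older in space gives
\[
\Bigl\|\int_{T_1}^{T_2}\verb"S"(\,\cdot\,-s)\b{\rm f}(\b{\rm u}_c(s))\,{\rm d}s\Bigr\|_{L_t^\infty {\rm L}^2}\lesssim \|\b{\rm f}(\b{\rm u}_c)\|_{L_t^2([T_1,T_2];{\rm L}^{3/2})}\lesssim \|\b{\rm u}_c\|_{L_t^4([T_1,T_2];{\rm L}^3)}^2 \lesssim (T_2-T_1)^{1/2}.
\]
Sending $T_1,T_2\to T^*$ makes the right-hand side vanish, which is precisely the Cauchy condition for the net $T\mapsto \int_t^{T}\verb"S"(t-s)\b{\rm f}(\b{\rm u}_c)\,{\rm d}s$ in $L_t^\infty {\rm L}^2$ as $T\uparrow T^*$; this net therefore converges strongly in ${\rm L}^2$, uniformly in $t$. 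Lemma \ref{reduced} ensures that it also converges weakly in ${\rm\dot H}^1$ to $\b{\rm u}_c(t)$, so the two limits agree as distributions, giving $\b{\rm u}_c(t)\in{\rm L}^2$ together with the quantitative decay
\[
\|\b{\rm u}_c(t)\|_{{\rm L}^2}\lesssim (T^*-t)^{1/2}.
\]

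Finally, $\b{\rm u}_c(t)\in{\rm L}^2\cap{\rm\dot H}^1$ at every time, so the conservation of mass $M(\b{\rm u}_c(t))\equiv M(\b{\rm u}_c(0))$ applies; letting $t\to T^*$ forces $M(\b{\rm u}_c(0))=0$, i.e.\ $\b{\rm u}_c(0)=\b{\rm 0}$. Since the zero initial datum produces the global zero solution, which does not blow up, this contradicts the hypothesis on $\b{\rm u}_c$. The main obstacle is the middle step: one must verify that $(2,3)$ is indeed ${\rm L}^2$-admissible in six dimensions and then promote the \emph{weak} ${\rm\dot H}^1$ convergence provided by Lemma \ref{reduced} to \emph{strong} ${\rm L}^2$ convergence via the Cauchy criterion above. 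The finiteness of $T^*$ is essential here, since it is exactly what produces the H\"older-in-time gain $(T^*-t)^{1/4}$ on which the entire estimate rests.
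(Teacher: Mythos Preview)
Your proof is correct and takes a genuinely different route from the paper's. The paper first shows $\lambda(t)\to\infty$ as $t\to\sup I_c$ using almost periodicity and stability, then exploits the resulting $L^3$-concentration to prove that the truncated mass $\int_{|x|<R}|\b{\rm u}_c|^2\,{\rm d}x$ vanishes at the blow-up time; a localized mass functional together with Hardy's inequality controls its time derivative, and the Fundamental Theorem of Calculus plus $R\to\infty$ yields $M(\b{\rm u}_{c,0})\lesssim|\sup I_c-t_1|$. You instead bypass both the $\lambda(t)\to\infty$ step and the cutoff/Hardy machinery: the reduced Duhamel formula, the endpoint Strichartz pair $(2,3)$ with dual $(2,3/2)$, the quadratic bound $|\b{\rm f}(\b{\rm u}_c)|\lesssim|\b{\rm u}_c|^2$ mapping $L^3\to L^{3/2}$, and the uniform $\dot H^1\hookrightarrow L^3$ control combine to give directly $\|\b{\rm u}_c(t)\|_{{\rm L}^2}\lesssim(T^*-t)^{1/2}$. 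Both arguments finish with mass conservation. Your approach is shorter and uses almost periodicity only implicitly through Lemma~\ref{reduced}; it leans on the specific quadratic structure and the Keel--Tao endpoint in $\R^6$. The paper's argument is more hands-on with the spatial compactness but does not require the endpoint Strichartz estimate.
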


\begin{proof}
We prove this theorem by contradiction argument. Suppose $ \b{\rm u}_c : I_c \times \R \to \C^3 $ is a finite time blow-up solution to \eqref{NLS system} with the maximal-lifespan $I_c$. Without loss of generality, let $\b{\rm u}_c$ blow up forward in time, i.e. $\sup I_c < \infty$.

We conclude that
\begin{equation}
\liminf_{t \to \sup I_c} \lambda(t) = \infty.
\label{lam=infty}
\end{equation}
In fact, if this claim fails, we can choose $t_n \in I_c$ satisfying $ t_n \to \sup I_c $, and define rescaled function $ \b{\rm v}_n : I_n \times \R^6 \to \C^3 $  as follow:
\begin{equation*}
\b{\rm v}_n (t, x) := \frac1{[\lambda(t_n)]^2} \b{\rm u}_c \left( t_n + \frac{t}{[\lambda(t_n)]^2}, x(t_n) + \frac{x}{\lambda(t_n)} \right),
\end{equation*}
where $ I_n := \left\{ t ~ \big\vert ~ t_n \in I_c, t_n + \frac{t}{[\lambda(t_n)]^2} \in I_c \right\} $.
Obviously, by the definition of almost periodicity solution, $ \{ \b{\rm v}_n (t, x) \}_{n=1}^\infty $ is a sequence of solutions to \eqref{NLS system} and $ \{ \b{\rm v}_n(0) \}_{n=1}^\infty \subset {\rm\dot H}^1(\R^6) $ is precompact.
Therefore, there exists $ \b{\rm v}_0 $ such that
\[
\lim_{n \to \infty} \Vert \b{\rm v}_n(0) - \b{\rm v}_0 \Vert_{{\rm\dot H}_x^1(\R^6)} = 0.
\]
According to $ K(\b{\rm v}_n(0)) = K(\b{\rm u}_c(t_n)) $ and Sobolev embedding, we know $ \b{\rm v}_0 \not\equiv \b{\rm 0} $.
we define  the maximal-lifespan solution $\b{\rm v}$ to

\begin{equation}
\left\{
\begin{aligned}
& i \partial_{t}\b{\rm v} + A \b{\rm v}  =  \b{\rm f}(\b{\rm v}), \quad (t, x) \in I \times \mathbb{R}^6,\\
& \b{\rm v}(0,x)= \b{\rm v}_0(x),
\end{aligned}
\right.
\label{ bijin}
\end{equation}
where $I:=\left(-T_-, T_+\right)$ satisfies $ -\infty \le -T_- < 0 < T_+ \le +\infty.$ By the local well-posedness and stability theory, we know when $n$ is large enough, $\b{\rm v}_n $ is local well-posed and for any compact subinterval $J \Subset \left(-T_-, T_+\right) $ the scattering size $ S_{J} (\b{\rm v}_n) < \infty $. This shows that $\b{\rm u}_c$ is well-posed and has finite scattering size in the interval $\left\{ t_n +\frac{t}{[\lambda(t_n)]^2}, \quad t\in J \right\}$.

On the other hand, when $t_n \to \sup I_c$, $\liminf_{t \to \sup I_c} \lambda(t) = \infty.$ From this, we know  $ \sup I_c \in \left\{ t_n +\frac{t}{[\lambda(t_n)]^2}, \quad t\in J \right\}$. This is a contradiction with forward blow-up on $I_c$.

Consider $ \b{\rm u}_c := (u^1_c, u^2_c, u^3_c)^T $.
In fact, let $ 0 < \eta < 1 $ and $ t \in I_c $. By H\"older's inequality and Sobolev embedding, we obtain
\begin{align}
\int_{\vert x \vert < R} \vert u^i_c (t, x) \vert^2 {\rm d}x
\le & ~ \int_{\vert x - x(t) \vert \le \eta R} \vert u^i_c(t, x) \vert^2 {\rm d}x + \int_{\vert x \vert \le R, \vert x - x(t) \vert > \eta R} \vert u^i_c(t, x) \vert^2 {\rm d}x \nonumber \\
\lesssim & ~ \eta^2 R^2 \Vert u^i_c \Vert_{L_x^3(\R^6)}^2 +  R^2 \left( \int_{\vert x - x(t) \vert > \eta R} \vert u_c(t, x) \vert^3 {\rm d}x \right)^{\frac23} \nonumber \\
\lesssim & ~ \eta^2 R^2  K(\b{\rm W}) +  R^2 \left( \int_{\vert x - x(t) \vert > \eta R} \vert u_c(t, x) \vert^3 {\rm d}x \right)^{\frac23}. \label{eta<1}
\end{align}
Letting $ \eta \to 0 $ and using the compactness of almost periodicity solution, we can set $ \zeta = \zeta(\varepsilon, R) = 2^{-\frac94} \varepsilon^{\frac32} R^{-3} > 0, \forall ~ \varepsilon > 0 $.
Thanks to \eqref{lam=infty}, there exists $ \eta = \eta(\varepsilon, R, \zeta) $ such that $ 0 < \eta \le 2^{-\frac34} [H(\b{\rm W})]^{-\frac12} \varepsilon^{\frac12} R^{-1} $ and $ C(\eta) \ge \lambda(t) C(\zeta) $.
Thus,
\[
\eta^2 R^2 \Vert u_c \Vert_{L_x^3(\R^6)}^2 \le \eta^2 R^2 \Vert u_c \Vert_{{\dot H}_x^1(\R^6)}^2 < \eta^2 R^2 K(\b{\rm W}) \le \frac{\varepsilon}{2}
\]
and
\begin{equation*}
\begin{aligned}
R^2 \left( \int_{\vert x - x(t) \vert > \eta R} \vert u_c(t, x) \vert^3 {\rm d}x \right)^{\frac23}
\le & ~ R^2 \left( \int_{\vert x - x(t) \vert > C(\zeta) \frac{\eta R \lambda(t)}{C(\eta)}} \vert \nabla u_c(t, x) \vert^2 {\rm d}x \right)^{\frac23} \\
< & ~ R^2 \zeta^{\frac23} = \frac{\varepsilon}{2}.
\end{aligned}
\end{equation*}
Combining the two above inequalities and \eqref{eta<1}, we have for $ i = 1, 2, 3 $
\begin{equation}
\limsup_{t \to \sup I_c} \int_{\vert x \vert < R} \vert u^i_c (t, x) \vert^2 {\rm d}x = 0, \quad \forall ~ R > 0.
\label{M=000}
\end{equation}

Let $ a(x) $ be a radial smooth function  satisfying
\[
a(x) = \left\{
\begin{aligned}
1, \qquad & \vert x \vert \le R, \\
0, \qquad & \vert x \vert \ge 2R,
\end{aligned}
\right.
\]
and $ \vert \nabla a(x) \vert \lesssim \frac{1}{\vert x \vert}$.
Let
\[
V_{R,c} (t) := \int_{\mathbb{R}^6} \left( \frac12 \vert u_c^1 \vert^2 + \frac12\vert u_c^2 \vert^2 + \vert u_c^3\vert^2 \right) a(x) {\rm d}x.
\]

On one hand, by \eqref{M=000}, we have
\begin{equation}
\limsup_{t \to \sup I_c} V_{R,c} (t) = 0.
\label{M=01}
\end{equation}
Then, by Proposition \ref{vi} and Hardy's inequality, we obtain
\begin{align*}
\vert V_{R,c}^\prime (t) \vert & ~ =  \left\vert\rm{Im} \int_{\R^6} (\kappa_{1}\overline{u}^1_c\nabla u^1_c + \kappa_{2}\overline{u}^2_c\nabla u^2_c + 2\kappa_{3}\overline{u}^3_c\nabla u^3_c ) \cdot \nabla a(x) {\rm d}x \right\vert \\
& ~ \lesssim\Vert \nabla \b{\rm u}_c \Vert_{{\rm L}^2(\R^6)} \left\Vert \frac{\b{\rm u}_c}{\vert x \vert} \right\Vert_{{\rm L}^2(\R^6)} \\
& ~ \lesssim[K(\b{\rm u}_c)]^2 \lesssim [K(\b{\rm W})]^2.
\end{align*}

On the other hand, Fundamental Theorem of Calculus tells us,
\[
V_{R,c}(t_1) \lesssim V_{R,c}(t_2) + \vert t_1 - t_2 \vert [K(\b{\rm W})]^2, \qquad \forall ~ t_1, t_2 \in I_c.
\]
Let $ t_2 \to \sup I_c $ and using \eqref{M=01}, we obtain
\[
V_{R,c}(t_1) \lesssim \vert \sup I_c - t_1 \vert [K(\b{\rm W})]^2, \qquad \forall ~ t_1 \in I_c.
\]
By the new  physically conserved quantity \eqref{New} and the above estimate, we have
\begin{equation*}
\begin{aligned}
M(\b{\rm u}_{c, 0})
= & ~ \lim_{\R \to \infty} V_{R,c}(t_1) \lesssim \vert \sup I_c - t_1 \vert [K(\b{\rm W})]^2, \qquad \forall ~ t_1 \in I_c.
\end{aligned}
\end{equation*}
Let $ t_1 \to \sup I_c $, we know $ \b{\rm u}_{c, 0} \equiv \b{\rm 0} $.

Thus, by the uniqueness of the solution to \eqref{NLS system}, $ \b{\rm u}_c \equiv \b{\rm 0} $, which is a contradiction with \eqref{u_c}.
\end{proof}

\subsection{Negative regularity}\label{nr}

Now, we will show the negative regularity for the critical solution $\b{\rm u}_c$ to \eqref{NLS system}, which plays an important role in exclusion of soliton-type solutions and low-to-high frequency cascade.

Before we prove the negative regularity theorem, we need a technical tool. It is a form of Gronwall’s inequality that involves both the past and the future, “acausal” in the terminology of \cite{Colliander2006}.

\begin{lemma}[Bellman-Gronwall inequality, \cite{Killip2010}]\label{Gronwall}
Fix $ \gamma > 0 $. Given $ 0 < \eta < (1-2^{-\gamma})/2 $ and $ \{ b_k \}_{k=0}^{\infty} \in l^\infty(\mathbb{N}) $, let $ \{ x_k \}_{k=0}^{\infty} \in l^\infty(\mathbb{N}) $ be a non-negative sequence obeying
\begin{align}
x_k \leq b_k + \eta \sum_{l=0}^{\infty}2^{-\gamma|k-l|} x_l, \qquad \forall ~ k \ge 0.
\end{align}
Then
\begin{align}
x_k \lesssim \sum_{l=0}^{\infty} r^{|k-l|} b_l
\end{align}
for some $ r = r(\eta)\in(2^{-\gamma},1) $.
Moreover, $ r \downarrow 2^{-\gamma} $ as $ \eta \downarrow 0 $.
\end{lemma}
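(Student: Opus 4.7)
The plan is to treat the inequality as a linear fixed-point relation, iterate it into a Neumann series in $l^\infty$, and then sum the iterates against an exponentially weighted majorant. Introduce the operator
\[
(Tu)_k \;:=\; \eta\sum_{l=0}^{\infty} 2^{-\gamma|k-l|}\,u_l,
\]
so that the hypothesis reads $x\le b+Tx$ componentwise. A direct geometric-series computation yields
\[
\|T\|_{l^\infty\to l^\infty} \;\le\; \eta\cdot\frac{1+2^{-\gamma}}{1-2^{-\gamma}} \;<\; \eta\cdot\frac{2}{1-2^{-\gamma}} \;<\; 1,
\]
the last inequality being exactly the hypothesis $\eta<(1-2^{-\gamma})/2$. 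Iterating $x\le b+Tx$ gives $x\le \sum_{n=0}^{N}T^n b+T^{N+1}x$; since $x\in l^\infty$ the remainder satisfies $T^{N+1}x\to 0$, so $x_k\le \sum_{n\ge 0}(T^n b)_k$ and the task reduces to summing the iterates.

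The key technical input is the convolution bound: for every $r\in(2^{-\gamma},1)$ there is a constant $K(\gamma,r)>0$ such that
\[
\sum_{l\ge0} 2^{-\gamma|k-l|}\, r^{|l-m|} \;\le\; K(\gamma,r)\, r^{|k-m|}, \qquad K(\gamma,r)\;\sim\;\frac{1}{2^\gamma r-1}\ \text{ as } r\downarrow 2^{-\gamma}.
\]
This is a routine computation: by symmetry assume $k\ge m\ge 0$, split the $l$-sum into the three ranges $l\ge k$, $m\le l\le k$, and $l\le m$, and evaluate three geometric series with common ratios $2^{-\gamma}r$, $2^\gamma r$, and $2^\gamma/r$ respectively. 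All three ratios differ from $1$ precisely because $r\in(2^{-\gamma},1)$, and the blow-up of $K(\gamma,r)$ as $r\downarrow 2^{-\gamma}$ is driven by the middle block, whose denominator $2^\gamma r-1$ vanishes while the other two pieces remain bounded.

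Set $X_k:=\sum_{l\ge 0} r^{|k-l|}|b_l|$. Because $2^{-\gamma}\le r$, one immediately has $(Tb)_k\le \eta X_k$, and the convolution bound then yields inductively
\[
(T^n b)_k \;\le\; \eta^n\, K(\gamma,r)^{n-1}\, X_k, \qquad n\ge 1.
\]
Provided $\eta K(\gamma,r)<1$, summing the geometric series and using $|b_k|\le X_k$ produces
\[
x_k \;\le\; |b_k| + \sum_{n\ge 1}\eta^n K(\gamma,r)^{n-1} X_k \;\le\; \frac{1}{1-\eta K(\gamma,r)}\, X_k,
\]
which is the desired bound $x_k\lesssim\sum_l r^{|k-l|}b_l$.

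The main obstacle — and what forces the refinement $r\downarrow 2^{-\gamma}$ as $\eta\downarrow 0$ — is the tension between $r>2^{-\gamma}$ (needed for the convolution estimate) and $\eta K(\gamma,r)<1$ (needed for convergence of the Neumann series). Since $K(\gamma,r)\sim(2^\gamma r-1)^{-1}$, I calibrate $r$ so that $2^\gamma r-1$ is of order $\eta$: choosing $r:=2^{-\gamma}(1+2\eta)$ gives $\eta K(\gamma,r)\lesssim\tfrac12$; the elementary inequality $2^\gamma+2^{-\gamma}\ge 2$ together with $\eta<(1-2^{-\gamma})/2$ keeps $r<1$, and evidently $r\downarrow 2^{-\gamma}$ as $\eta\downarrow 0$, which completes the plan.
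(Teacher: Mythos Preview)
The paper does not prove this lemma; it merely states it with a citation to Killip--Visan \cite{Killip2010} and then uses it as a black box in the negative-regularity argument. So there is no proof in the paper to compare against.

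Your argument is correct. The only point that deserves a sharper justification is the final calibration step, where you write ``$\eta K(\gamma,r)\lesssim\tfrac12$'' with $r=2^{-\gamma}(1+2\eta)$. That notation is ambiguous, since an implicit constant in front of $\tfrac12$ could in principle push $\eta K$ above $1$. In fact the precise computation goes through cleanly: on $\mathbb{Z}$ (which dominates the $\mathbb{N}$ sum) one finds the exact convolution constant
\[
K(\gamma,r)=\frac{r\,(1-4^{-\gamma})}{(1-2^{-\gamma}r)(r-2^{-\gamma})},
\]
and with $\alpha:=2^{-\gamma}$, $r=\alpha(1+2\eta)$ this gives
\[
\eta K=\frac{(1+2\eta)(1-\alpha^2)}{2\bigl(1-\alpha^2-2\alpha^2\eta\bigr)}<1
\quad\Longleftrightarrow\quad
\eta<\frac{1-\alpha^2}{2(1+\alpha^2)}.
\]
Since $\alpha<1$ implies $\dfrac{1-\alpha}{2}<\dfrac{1-\alpha^2}{2(1+\alpha^2)}$, the hypothesis $\eta<(1-2^{-\gamma})/2$ indeed forces $\eta K<1$ for every admissible $\eta$, not just small ones. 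With this refinement your Neumann-series bound sums, and the choice $r=2^{-\gamma}(1+2\eta)$ visibly satisfies $r\downarrow 2^{-\gamma}$ as $\eta\downarrow 0$.
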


\begin{theorem}[Negative regularity]\label{negative regularity}
Let $ \b{\rm u}_c $ be as in Proposition \ref{enemies} with $ I_c = \R $.
If
\[
\sup_{t \in \R} K(\b{\rm u}_c(t)) < \infty \quad \text{and} \quad \inf_{t \in \R} \lambda(t) \ge 1,
\]
then there exists $ \eps > 0 $ such that $ \b{\rm u}_c \in {\rm L}_t^\infty(\R, {\rm\dot H}^{-\eps}(\R^6)) $.
\end{theorem}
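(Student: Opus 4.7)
The proof follows the Killip--Visan negative-regularity scheme, adapted to the quadratic system \eqref{NLS system}. The basic input is the reduced Duhamel formula of Lemma \ref{reduced}, which expresses $\b{\rm u}_c$ purely in terms of its own nonlinearity; after applying $P_N$ for small dyadic $N$, one intends to bootstrap a self-referential bound with the help of the Gronwall-type Lemma \ref{Gronwall}. First, using the hypotheses $\inf_t \lambda(t)\ge 1$ and almost periodicity, I would fix $\eta>0$ and find $N_0(\eta)\ll 1$ with
\begin{equation*}
\sup_{t\in\R}\,\bigl\|P_{\le N_0(\eta)}\b{\rm u}_c(t)\bigr\|_{{\rm\dot H}^1(\R^6)}\le\eta,
\end{equation*}
so that the orbit is uniformly concentrated away from very low frequencies. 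In particular this furnishes genuine smallness whenever a factor of frequency $\le N_0$ appears in a product.

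Next, I would combine the reduced Duhamel formula with Theorem \ref{strichartz} to obtain, for any dyadic $N\le N_0$,
\begin{equation*}
\bigl\|P_N\b{\rm u}_c\bigr\|_{\mathcal{S}(\R,{\rm L}^2(\R^6))}\lesssim \bigl\|P_N\b{\rm f}(\b{\rm u}_c)\bigr\|_{\mathcal{S}^\prime(\R,{\rm L}^2(\R^6))},
\end{equation*}
and then expand, using that $\b{\rm f}(\b{\rm u})$ is bilinear,
\begin{equation*}
P_N(\overline{u^i_c}\,u^j_c)=\sum_{\max(M_1,M_2)\gtrsim N}P_N\bigl(P_{M_1}\overline{u^i_c}\cdot P_{M_2}u^j_c\bigr).
\end{equation*}
Each block is estimated by H\"older, Bernstein (Lemma \ref{Bern}) and Strichartz. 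In the high-high regime $M_1,M_2\gtrsim N$, the uniform $\dot H^1$ bound combined with a Bernstein gain produces a summable tail that decays as a positive power of $N$, which I call $b_N$. In the high-low regime $M_1\sim N$ and $M_2\ll N$ (or vice versa), one factor is absorbed by the $\dot H^1$ bound and the other is exactly the low-frequency piece we are trying to control, yielding a contribution proportional to $\eta\cdot A(M_2)$ together with a geometric weight in $|\log_2(N/M_2)|$ that comes from Bernstein pulling out powers of the ratio.

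Organizing the resulting bounds into a single quantity $A(N):=\sup_t N^{-s_0}\bigl\|P_N\b{\rm u}_c(t)\bigr\|_{{\rm L}^{r_0}_x(\R^6)}$ for a suitably chosen admissible exponent $(q_0,r_0)$ and small weight $s_0>0$, the target inequality becomes
\begin{equation*}
A(N)\le b_N+C\eta\sum_{M}2^{-\gamma|\log_2(N/M)|}A(M),\qquad N\le N_0(\eta).
\end{equation*}
Choosing $\eta$ small enough that $C\eta<(1-2^{-\gamma})/2$ and applying Lemma \ref{Gronwall} gives $A(N)\lesssim r^{|\log_2 N|}$ for some $r<1$, i.e.\ $A(N)\lesssim N^{\delta}$ for a fixed $\delta>0$; converting back via Bernstein and summing over dyadic $N\le 1$ yields $\b{\rm u}_c\in {\rm L}^\infty_t({\rm\dot H}^{-\varepsilon}(\R^6))$ for every sufficiently small $\varepsilon>0$, which is the claim. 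The main obstacle is the book-keeping in the bilinear step: one must pick the Strichartz pair $(q_0,r_0)$ and the weight $s_0$ so that the recursion closes in the same norm while the off-diagonal factor still carries a strictly positive $\gamma$. The quadratic nature of $\b{\rm f}$ helps (only two factors, no need for a fractional chain rule), but since the nonlinearity is scale-critical in $\dot H^1$ every bit of gain must come from frequency separation and from the low-frequency smallness of Step~1, not from any individual factor's regularity.
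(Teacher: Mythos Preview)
Your outline follows the Killip--Visan architecture correctly in spirit: almost periodicity gives low-frequency smallness, the reduced Duhamel formula localized to frequency $N$ yields a self-referential bound, and Lemma \ref{Gronwall} closes the recursion. However, the specific mechanism you propose for the second step does not work as written. You invoke global Strichartz (Theorem \ref{strichartz}) to write $\|P_N\b{\rm u}_c\|_{\mathcal{S}(\R,{\rm L}^2)}\lesssim \|P_N\b{\rm f}(\b{\rm u}_c)\|_{\mathcal{S}'(\R,{\rm L}^2)}$, but $\b{\rm u}_c$ has infinite scattering size on $\R$ (this is precisely what makes it critical), so the spacetime norms on the left are not known to be finite and the inhomogeneous estimate on the whole line is vacuous. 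The paper instead works at a \emph{fixed} time and uses the pointwise dispersive estimate (Lemma \ref{disper}): writing the reduced Duhamel integral as $\int_0^{N^{-2}}+\int_{N^{-2}}^\infty$, the short piece is controlled by Bernstein and the long piece by the $L^{4/3}_x\to L^4_x$ decay $|\tau|^{-3/2}$, which is integrable on $[N^{-2},\infty)$ and contributes exactly a power of $N$. This yields the fixed-time recurrence for $A(N):=N^{-1/2}\|P_N\b{\rm u}_c\|_{L^\infty_tL^4_x}$, in which only $L^\infty_t$ norms appear and the infinite scattering size never enters.

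There is a second, smaller gap. After Lemma \ref{Gronwall} gives $A(N)\lesssim N^\delta$, one has $\|P_N\b{\rm u}_c\|_{L^\infty_tL^4_x}\lesssim N^{1/2+\delta}$, which places $\b{\rm u}_c$ in $L^\infty_tL^p_x$ for some $p<3$ (this is Proposition \ref{re}) but \emph{not} directly in $L^\infty_t{\rm\dot H}^{-\varepsilon}$; Bernstein from $L^4$ to $L^2$ costs powers of $N$ rather than gaining them. The paper therefore runs a second bootstrap (Proposition \ref{sre}): the Lebesgue gain on $\b{\rm u}_c$ puts $\nabla\b{\rm f}(\b{\rm u}_c)$ in $L^\infty_tL^r_x$ with $r<6/5$, and another dispersive-Duhamel argument converts this into a fixed Sobolev drop $s_0>0$; iterating finitely many times (using the fractional product rule, Lemma \ref{cr}, at each step) reaches ${\rm\dot H}^{-\varepsilon}$. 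Your proposal collapses these two phases into one and explicitly discards the fractional product rule, but something like Proposition \ref{sre} is needed to pass from the Lebesgue-scale breach to a genuine Sobolev-scale one.
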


\begin{proof}
Since $ \b{\rm u}_c(t) $ is  almost periodic modulo symmetries with $ \lambda(t) \geq 1 $, then there exists $ N_0 = N_0(\eta) $ such that
\begin{equation}
\left\Vert \nabla ({\rm P}_{\leq N_0} \b{\rm u}_c) \right\Vert_{{\rm L}_t^\infty(\R, {\rm L}_x^2(\R^6))} \leq \eta, \qquad \forall ~ \eta > 0,
\label{Small-compact}
\end{equation}
where operator $ {\rm P} := diag(P; P; P) $.
The proof is divided by four steps.
	
\noindent {\bf Step 1. Breaking the scaling in Lebesgue space}.
To this end, we define a quantity as follow
\begin{equation}
A(N) := N^{-\frac12} \left\Vert {\rm P}_{N} \b{\rm u}_c \right\Vert_{{\rm L}_t^\infty(\R, {\rm L}_x^4(\R^6))},
\label{A(N)}
\end{equation}
for $ N \le 10 N_0 $.
	
\noindent {\bf Step 2. A recurrence lemma for $ A(N) $}.

\begin{lemma}[Recurrence]\label{recurrence}
Let $ A(N) $ be defined as above, for any $ N \leq 10 N_0 $, we have
\begin{equation}
A(N) \lesssim_{\b{\rm u}_c} \left(\frac{N}{N_0}\right)^{\frac12}+\eta\sum_{\frac{N}{10}\leq N_1\leq N_0}\left(\frac{N}{N_1}\right)^{\frac12}A(N_1)+\eta\sum_{N_1<\frac{N}{10}}\left(\frac{N_1}{N}\right)^{\frac12} A(N_1).
\label{AN}
\end{equation}
\end{lemma}

\begin{proof}	
Fix $ N \le 10 N_0 $.
Using time-translation symmetry, we just need to prove
\begin{align}\nonumber
N^{-\frac12} \| \b{\rm u}_{c, 0} \|_{{\rm L}^{4}} \lesssim \left(\frac{N}{N_0}\right)^{\frac12} + & ~ \eta \sum_{\frac{N}{10} \le N_1 \le N_0} \left( \frac{N}{N_1} \right)^{\frac12} A(N_1) \\
+ & ~ \eta \sum_{N_1 < \frac{N}{10}} \left( \frac{N_1}{N} \right)^{\frac12} A(N_1).
\end{align}

By Bernstein estimates, Lemma \ref{disper}, and Lemma \ref{reduced}, we know
\begin{equation*}
\begin{aligned}
N^{-\frac12} \| P_N \b{\rm u}_{c, 0} \|_{{\rm L}^{4}(\R^6)}
\leq & ~ N^{-\frac12} \Big\| \int_{0}^{N^{-2}} \verb"S"(- \tau) {\rm P}_N \b{\rm f}(\b{\rm u}_c(\tau)) {\rm d}\tau \Big\|_{{\rm L}_x^4(\R^6)} \\
& ~ + N^{-\frac12} \int_{N^{-2}}^{\infty} \big\| \verb"S"(- \tau) {\rm P}_N\b{\rm f}(\b{\rm u}_c(\tau)) \big\|_{{\rm L}_x^4(\R^6)} {\rm d}\tau \\
\lesssim & ~ N^{\frac12} \| {\rm P}_N \b{\rm f}(\b{\rm u}_c) \|_{{\rm L}_t^\infty(\R, {\rm L}_x^\frac43(\R^6))}.
\end{aligned}
\end{equation*}	
We define some notations as follows:
\begin{equation}
\b{\rm f}(\b{\rm z}) =
\left(
\begin{aligned}
f^1(\b{\rm z})\\
f^2(\b{\rm z})\\
f^3(\b{\rm z})
\end{aligned}
\right)
:=
 \left(
\begin{aligned}
-\overline{z_2}z_3 \\
-\overline{z_1}z_3 \\
-z_1 z_2
\end{aligned}
\right), \qquad \text{for} ~ \b{\rm z} := \left(
\begin{aligned}
z^1 \\
z^2 \\
z^3
\end{aligned}
\right).
\label{fz}
\end{equation}
Directly computing, we can obtain that
\begin{equation*}
f^1(\b{\rm u}_c) - f^1({\rm P}_{> N_0} \b{\rm u}_c)= -(P_{\le N_0} u^3_c) (P_{\le N_0} \overline{u^2_c})- (P_{> N_0} u^3_c) (P_{\le N_0} \overline{u^2_c}) - (P_{\le N_0} u^3_c) (P_{> N_0} \overline{u^2_c}),
\end{equation*}
\begin{equation*}
f^2(\b{\rm u}_c) - f^2({\rm P}_{> N_0} \b{\rm u}_c)= -(P_{\le N_0} u^3_c) (P_{\le N_0} \overline{u^1_c})- (P_{> N_0} u^3_c) (P_{\le N_0} \overline{u^1_c}) - (P_{\le N_0} u^3_c) (P_{> N_0} \overline{u^1_c}),
\end{equation*}
and
\begin{equation*}
f^3(\b{\rm u}_c) - f^3({\rm P}_{> N_0} \b{\rm u}_c)= -(P_{\le N_0} u^2_c) (P_{\le N_0} {u^1_c})- (P_{> N_0} u^2_c) (P_{\le N_0}{u^1_c}) - (P_{\le N_0} u^2_c) (P_{> N_0} {u^1_c}).
\end{equation*}
According to the Fundamental Theorem of Calculus, we get
\begin{align*}
f^l(\b{\rm z}^1) - f^l(\b{\rm z}^2)
= & ~ (\b{\rm z}^1-\b{\rm z}^2) \int_{0}^{1} \partial_{\b{\rm z}} f^l(\b{\rm z}^1 + \theta (\b{\rm z}^1-\b{\rm z}^2)) {\rm d}\theta \\
& ~ + \overline{(\b{\rm z}^1-\b{\rm z}^2)} \int_{0}^{1} \partial_{\bar{\b{\rm z}}} f^l\b{\rm z}^1 + \theta (\b{\rm z}^1-\b{\rm z}^2)) {\rm d}\theta, \quad l=1, 2 , 3.
\end{align*}
And we can compute the partial derivatives for \eqref{fz},
\begin{equation*}
\partial_{\b{\rm z}} f^1(\b{\rm z}) = \left(
\begin{aligned}
0 ~~ \\
0 ~~ \\
-\bar{z}_2
\end{aligned}
\right), \quad \partial_{\bar{\b{\rm z}}} f^1(\b{\rm z}) = \left(
\begin{aligned}
0 ~~ \\
-z_3 \\
0 ~~
\end{aligned}
\right), \quad \partial_{\b{\rm z}} f^2(\b{\rm z}) = \left(
\begin{aligned}
0 ~~ \\
0 ~~ \\
-\bar{z}_1
\end{aligned}
\right),
\end{equation*}
and
\begin{equation*}
\partial_{\bar{\b{\rm z}}} f^2(\b{\rm z}) = \left(
\begin{aligned}
-z_3 \\
0 ~~ \\
0 ~~
\end{aligned}
\right), \quad \partial_{\b{\rm z}} f^3(\b{\rm z}) = \left(
\begin{aligned}
-z_2 \\
-z_1 \\
0 ~~
\end{aligned}
\right), \quad \partial_{\bar{\b{\rm z}}} f^3(\b{\rm z}) = \b{\rm 0}.
\end{equation*}

For $ \b{\rm u}_c := (u_c^1, u_c^2, u_c^3)^T $,
\[
\b{\rm f}(\b{\rm u}_c) =
\left(
\begin{aligned}
f^1(\b{\rm u}_c)\\
f^2(\b{\rm u}_c)\\
f^3(\b{\rm u}_c)
\end{aligned}
\right)
:= -
\left(
\begin{aligned}
-(P_{\le N_0} \overline{u^2_c})(P_{> N_0} u^3_c)  \\
-(P_{\le N_0} \overline{u^1_c})(P_{> N_0} u^3_c)  \\
-(P_{\le N_0} {u^1_c})(P_{> N_0} u^2_c)
\end{aligned}
\right)
-
\left(
\begin{aligned}
-(P_{> N_0} \overline{u^2_c})(P_{\le N_0} u^3_c)  \\
-(P_{> N_0} \overline{u^1_c})(P_{\le N_0} u^3_c)  \\
-(P_{> N_0} {u^1_c})(P_{\le N_0} u^2_c)
\end{aligned}
\right)
\]

\[
+ \b{\rm f} ({\rm P}_{> N_0}{\b{\rm u}_c} ) + \b{\rm f}({\rm P}_{\frac{N}{10} \le \cdot \le N_0} \b{\rm u}_c) + \left(
\begin{aligned}
- {\rm P}_{<\frac{N}{10}} \b{\rm u}_c^T \int_{0}^{1} \partial_{\b{\rm z}} f^1({\rm P}_{\frac{N}{10} \le \cdot \le N_0} \b{\rm u}_c + \theta {\rm P}_{<\frac{N}{10}} \b{\rm u}_c){\rm d}\theta   \\
- {\rm P}_{<\frac{N}{10}} \b{\rm u}_c^T \int_{0}^{1} \partial_{\b{\rm z}} f^2({\rm P}_{\frac{N}{10} \le \cdot \le N_0} \b{\rm u}_c + \theta {\rm P}_{<\frac{N}{10}} \b{\rm u}_c){\rm d}\theta   \\
- {\rm P}_{<\frac{N}{10}} \b{\rm u}_c^T \int_{0}^{1} \partial_{\b{\rm z}} f^3({\rm P}_{\frac{N}{10} \le \cdot \le N_0} \b{\rm u}_c + \theta {\rm P}_{<\frac{N}{10}} \b{\rm u}_c){\rm d}\theta
\end{aligned}
\right)
\]

\[
+ \left(
\begin{aligned}
- \overline{{\rm P}_{<\frac{N}{10}} \b{\rm u}_c}^T \int_{0}^{1} \partial_{\bar{\b{\rm z}}} f^1({\rm P}_{\frac{N}{10} \le \cdot \le N_0} \b{\rm u}_c + \theta {\rm P}_{<\frac{N}{10}} \b{\rm u}_c){\rm d}\theta   \\
- \overline{{\rm P}_{<\frac{N}{10}} \b{\rm u}_c}^T \int_{0}^{1} \partial_{\bar{\b{\rm z}}} f^2({\rm P}_{\frac{N}{10} \le \cdot \le N_0} \b{\rm u}_c + \theta {\rm P}_{<\frac{N}{10}} \b{\rm u}_c){\rm d}\theta   \\
- \overline{{\rm P}_{<\frac{N}{10}} \b{\rm u}_c}^T \int_{0}^{1} \partial_{\bar{\b{\rm z}}} f^3({\rm P}_{\frac{N}{10} \le \cdot \le N_0} \b{\rm u}_c + \theta {\rm P}_{<\frac{N}{10}} \b{\rm u}_c){\rm d}\theta
\end{aligned}
\right)
:=  \mathcal{A}_1 +\mathcal{A}_2 + \mathcal{B + C} + \mathcal{D}_1 + \mathcal{D}_2.
\]

Firstly, we can directly estimate the first three terms above by H\"older and Bernstein inequalities,
\begin{equation*}
\left\Vert P_N (\mathcal{A}_1 +\mathcal{A}_2+ \mathcal{B}) \right\Vert_{{\rm L}_t^\infty(\R, {\rm L}_x^{\frac43}(\R^6))}
\lesssim \| P_{>N_0} \b{\rm u}_c \|_{{\rm L}_t^\infty(\R, {\rm L}_x^{\frac{12}5}(\R^6))} \| \b{\rm u}_c \|_{{\rm L}_t^\infty(\R, {\rm L}_x^{3}(\R^6))}
\lesssim N_0^{-\frac12}.
\end{equation*}

Next, we consider the $\mathcal{D}_1 + \mathcal{D}_2 $ term.
We only estimate the $\mathcal{D}_1$ term since the $\mathcal{D}_2$ one can be obtained similarly.
Bernstein's inequality yields
\begin{align*}
\| P_{>\frac{N}{10}}\partial_{\b{\rm z}} f^i(\b{\rm u}_c) \|_{{\rm L}_t^\infty(\R, {\rm L}_x^2(\R^6))}\lesssim N^{-1}\|\nabla\b{\rm u}_c \|_{{\rm L}_t^\infty(\R, {\rm L}_x^2(\R^6))}, \qquad i = 1, 2, 3.
\end{align*}

Thus, from H\"older's inequality and \eqref{Small-compact},
\begin{align*}
& ~ \left\Vert P_N \mathcal{D}_1 \right\Vert_{{\rm L}_t^\infty(\R, {\rm L}_x^{\frac43}(\R^6))} \\
= & ~ \sum_{i=1}^3\Big\| P_N \Big( P_{<\frac{N}{10}} \b{\rm u}_c \int_{0}^{1} \partial_{\b{\rm z}} f^i(P_{\frac{N}{10} \le \cdot \le N_0} \b{\rm u}_c + \theta P_{<\frac{N}{10}} \b{\rm u}_c) {\rm d}\theta \Big) \Big\|_{{\rm L}_t^\infty(\R, {\rm L}_x^{\frac43}(\R^6))} \\
\lesssim & ~ \| P_{< \frac{N}{10}} \b{\rm u}_c \|_{{\rm L}_t^\infty(\R, {\rm L}_x^4(\R^6))}
\sum_{i=1}^3 \Big\| P_{> \frac{N}{10}} \Big( \int_{0}^{1} \partial_{\b{\rm z}} f^i(P_{\frac{N}{10} \le \cdot \le N_0} \b{\rm u}_c + \theta P_{<\frac{N}{10}} \b{\rm u}_c){\rm d}\theta \Big) \Big\|_{{\rm L}_t^\infty(\R, {\rm L}_x^2(\R^6))} \\
\lesssim & ~ N^{-1} \| P_{< \frac{N}{10}} \b{\rm u}_c \|_{{\rm L}_t^\infty(\R, {\rm L}_x^4(\R^6))} \| \nabla P_{\le N_0} \b{\rm u}_c \|_{{\rm L}_t^\infty(\R, {\rm L}_x^2(\R^6))} \\
\lesssim & ~ N^{-1} \eta \sum_{N_1 < \frac{N}{10}} N_1^{\frac12} A(N_1).
\end{align*}

Therefore, we get
\begin{equation*}
\| P_{N}\mathcal{D} \|_{{\rm L}_t^\infty(\R, {\rm L}_x^{\frac43}(\R^6))}
= \sum_{j=1}^2 \| P_{N}\mathcal{D}_j \|_{{\rm L}_t^\infty(\R, {\rm L}_x^{\frac43}(\R^6))}
\lesssim N^{-1} \eta \sum_{N_1<\frac{N}{10}} N_1^{\frac12} A(N_1).
\end{equation*}

Now, only $\mathcal{C}$ term left, so we consider it by Bernstein and \eqref{Small-compact},
\begin{align*}
& ~ \left\Vert P_N\mathcal{C} \right\Vert_{{\rm L}_t^\infty(\R, {\rm L}_x^{\frac43}(\R^6))}
\lesssim \left\Vert \b{\rm f} (P_{\frac{N}{10} \le \cdot \le N_0} \b{\rm u}_c) \right\Vert_{{\rm L}_t^\infty(\R, {\rm L}_x^{\frac43}(\R^6))} \\
\lesssim & ~ \sum_{\frac{N}{10} \le N_1, N_2 \le N_0} \left( \left\Vert \overline{(P_{N_1} u^2_c)} (P_{N_2} u^3_c) \right\Vert_{L_t^\infty(\R, L_x^{\frac43}(\R^6))} + \left\Vert \overline{(P_{N_1} u^1_c)} (P_{N_2} u^3_c) \right\Vert_{L_t^\infty(\R, L_x^{\frac43}(\R^6))} \right. \\
& ~ \qquad \left. + \left\Vert (P_{N_1} u^1_c) (P_{N_2} u^2_c) \right\Vert_{L_t^\infty(\R, L_x^{\frac43}(\R^6))} \right) \\
\lesssim & ~ \eta \sum_{\frac{N}{10} \le N_1 \le N_2 \le N_0} N_2^{-1} \left( \left\Vert P_{N_1} u^1_c \right\Vert_{L_t^\infty(\R, L_x^4(\R^6))} + N_2^{-1} \left\Vert P_{N_1} u^2_c \right\Vert_{L_t^\infty(\R, L_x^4(\R^6))} \right. \\
& ~ \qquad \left. +  N_2^{-1} \left\Vert P_{N_1} u^3_c \right\Vert_{L_t^\infty(\R, L_x^4(\R^6))} \right) \\
& ~ + \sum_{\frac{N}{10} \le N_2 \le N_1 \le N_0} \left( \left\Vert P_{N_1} u^1_c \right\Vert_{L_t^\infty(\R, L_x^2(\R^6))} \left\Vert P_{N_2} u^2_c \right\Vert_{L_t^\infty(\R, L_x^4(\R^6))} \right. \\
& ~ \qquad + \left\Vert P_{N_1} u^1_c \right\Vert_{L_t^\infty(\R, L_x^2(\R^6))} \left\Vert P_{N_2} u^3_c \right\Vert_{L_t^\infty(\R, L_x^4(\R^6))} \\
& ~ \qquad \left. + \left\Vert P_{N_1} u^2_c \right\Vert_{L_t^\infty(\R, L_x^2(\R^6))} \left\Vert P_{N_2} u^3_c \right\Vert_{L_t^\infty(\R, L_x^4(\R^6))} \right) \\
\lesssim & ~ \eta \sum_{\frac{N}{10} \le N_1 \le N_0} N_1^{-\frac12} A(N_1) + \eta \sum_{\frac{N}{10} \le N_2 \le N_1 \le N_0} \Big( \frac{N_2}{N_1} \Big) (N_2^{-\frac12} A(N_2)) \\
\lesssim & ~ \eta \sum_{\frac{N}{10} \le N_1 \le N_0} N_1^{-\frac12} A(N_1).
\end{align*}

Finally, we combine the above estimates to get
\begin{equation*}
\begin{aligned}
A(N) & ~ = N^{\frac12} \| P_{N}(\mathcal{A}_1 +\mathcal{A}_2+  \mathcal{B + C} +\mathcal {D}_1 + \mathcal{D}_2 ) \|_{{\rm L}_t^\infty(\R, {\rm L}_x^{\frac43}(\R^6))} \\
& ~ \lesssim_{\b{\rm u}_c} \left(\frac{N}{N_0}\right)^{\frac12}+\eta\sum_{\frac{N}{10}\leq N_1\leq N_0}\left(\frac{N}{N_1}\right)^{\frac12}A(N_1)+\eta\sum_{N_1<\frac{N}{10}}\left(\frac{N}{N_1}\right)^{\frac12}A(N_1),
\end{aligned}
\end{equation*}
which yields \eqref{AN}.
\end{proof}

Combining the lemma \ref{Gronwall} and Lemma \ref{recurrence}, we can obtain the following the property.

\noindent {\bf Step 3. $ L^p $-breach of scaling}.

\begin{proposition}\label{re}
Let $ \b{\rm u}_c $ satisfy the all assumptions of Theorem \ref{negative regularity}. Then
\[
\b{\rm u}_c \in {\rm L}_t^\infty(\R, {\rm L}_x^p(\R^6)) \quad \text{for} ~ \frac{14}{5} \le p < 3.
\]
Especially, according to H\"older's inequality,
\[
\nabla \b{\rm f}(\b{\rm u}_c) \in {\rm L}_t^\infty(\R, {\rm L}_x^r(\R^6)) \quad \text{for} ~ \frac76 \le r < \frac65.
\]
\end{proposition}

\begin{proof}
From Lemma \ref{Gronwall} and Lemma \ref{recurrence}, we claim
\begin{equation}
\Vert P_N \b{\rm u}_c \Vert_{{\rm L}_t^\infty(\R, {\rm L}_x^{\frac43}(\R^6))} \lesssim N \quad \text{for} ~ N \leq 10 N_0.
\label{<N}
\end{equation}
In fact, taking $N=10\cdot2^{-k}N_0,$ $x_k=A(10\cdot2^{-k}N_0)$ and $\gamma=1$ in Lemma \ref{Gronwall}, we deduce
\[
A(10\cdot2^{-k}N_0)\lesssim \sum_{l=0}^{\infty} r^{-|k-l|}(10\cdot2^{-l})^\frac12.
\]
Then taking $ \eta > 0 $ small enough such that $ r < \sqrt{2} $, we have
\[
A(10\cdot2^{-k}N_0)\lesssim (10\cdot2^{-k})^\frac12\sum_{l=0}^{\infty} r^{|k-l|}(10\cdot2^{-l+k})^\frac12\lesssim (10\cdot2^{-k})^\frac12.
\]
Recall the definition of $ A(N) $ in \eqref{A(N)}, we get
\[
\Vert P_N \b{\rm u}_c \Vert_{{\rm L}_t^\infty(\R, {\rm L}_x^{\frac43}(\R^6))} \lesssim N_0^{-\frac12} N^{\frac12}(10\cdot2^{-k}N_0)^\frac12 \lesssim N,
\]
this yields \eqref{<N}.

Using interpolation inequality, we can obtain
\begin{equation*}
\| P_N \b{\rm u}_c \|_{{\rm L}_t^\infty(\R, {\rm L}_x^p(\R^6))}
\lesssim \| P_N \b{\rm u}_c \|_{{\rm L}_t^\infty(\R, L_x^4(\R^6))}^{\frac{2(p-2)}{p}} \| P_N \b{\rm u}_c \|_{{\rm L}_t^\infty(\R, {\rm L}_x^2(\R^6))}^{\frac4p-2}
\lesssim N^{3-\frac{8}{p}}
\lesssim N^{\frac17}
\end{equation*}
for all $ N \le 10 N_0 $.
According to Lemma \ref{Bern},
\begin{equation*}
\begin{aligned}
\| \b{\rm u}_c \|_{{\rm L}_t^\infty(\R, {\rm L}_x^p(\R^6))}
\le & ~ \| P_{\leq N_0} \b{\rm u}_c \|_{{\rm L}_t^\infty(\R, {\rm L}_x^p(\R^6))} + \| P_{>N_0} \b{\rm u}_c \|_{{\rm L}_t^\infty(\R, {\rm L}_x^p(\R^6))} \\
\lesssim & ~ \sum_{N \leq N_0} N^{\frac17} + \sum_{N > N_0} N^{2-\frac6p}
\lesssim 1,
\end{aligned}
\end{equation*}
which completes the proof of this Proposition.
\end{proof}	

And we can prove some negative regularity as shown in the following proposition by the technique of decomposition of frequency as in \cite{Killip2010}.

\begin{proposition}\label{sre}
	Suppose that $ \b{\rm u}_c $  satisfies all  assumptions of  Theorem \ref{negative regularity}.
	If $ |\nabla|^s \b{\rm f}(\b{\rm u}_c) \in {\rm L}_t^\infty(\R, {\rm L}_x^p(\R^6)) $, $ \frac76 \leq p < \frac65 $ and  $ s \in [0, 1] $,
	then there exists $ s_0 = s_0(p) > 0 $ such that $ \b{\rm u}_c \in {\rm L}_t^\infty(\R, {\rm\dot H}_x^{s-s_0+}(\R^6)) $.
\end{proposition}

\noindent {\bf Step 4. The proof of Theorem \ref{negative regularity}}.
First, we take $s=1$ in the Proposition \ref{sre}.
From this, we deduce $ \b{\rm u} \in {\rm L}_t^\infty({\rm\dot H}_x^{1-s_0+}(\R^6))$ for some $s_0 > 0 $.
Next, we use the fractional chain rule  Lemma \ref{cr} and  Proposition \ref{re} to obtain $ |\nabla|^{1-s_0+} \b{\rm f}(\b{\rm u}_c) \in {\rm L}_t^\infty(\R, {\rm L}_x^p(\R^6)) $ for some $ \frac76 \leq p < \frac65 $.
Using Proposition \ref{sre} again with $s=1-s_0^+$, we get $ \b{\rm u} \in {\rm L}_t^\infty({\rm\dot H}_x^{1-2s_0+}(\R^6))$.
Finally, we can iterate this procedure finite times to get the desired result.
\end{proof}	

\subsection{Soliton-type}\label{SS}
We use the new physically conserved quantity \eqref{New} under mass-resonance by a clever observation to prove the  zero momentum of the critical solution, which plays a key role to exclude the soliton-type solution. Based on the zero momentum, we get some compactness conditions.
Combining this, we exclude the soliton-type solution.

\begin{proposition}\label{P=0}
Let $ \b{\rm u}_c := (u^1_c, u^2_c, u^3_c)^T $ be a minimal-kinetic-energy blowing-up solution to \eqref{NLS system} satisfying $ \b{\rm u}_c \in {\rm L}_t^\infty(I_c, {\rm H}_x^1(\R^6)) $ and  the mass-resonance condition, then the momentum
is zero, that is,
\begin{equation}
P(\b{\rm u}_c) := \Im \int_{\mathbb{R}^6} \left(  \overline{u}^1_{c} \nabla u^1_{c} + \overline{u}^2_{c}\nabla u^2_{c}  + \overline{u}^3_{c}\nabla u^3_{c}\right) {\rm d}x=\b{\rm 0}.
\label{p=0}
\end{equation}
\end{proposition}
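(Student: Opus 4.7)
The plan is to use a Galilean boost to produce, from $\b{\rm u}_c$, another blow-up solution with strictly smaller maximal kinetic energy whenever $P(\b{\rm u}_c)\neq\b{\rm 0}$, thereby contradicting the minimality asserted in Theorem \ref{red}. The new conserved quantity $\Lambda$ is exactly what lets the boost parameter be chosen to minimize $K$.

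First I would record how $K$, $V$, $P$, and $\Lambda$ transform under the boost. Let $\b{\rm u}_c^\xi$ denote the Galilean transform of $\b{\rm u}_c$ with parameter $\xi\in\R^6$, as in the excerpt. Under mass-resonance the system is invariant, so $\b{\rm u}_c^\xi$ is again a maximal-lifespan solution of \eqref{NLS system} on $I_c$. A direct expansion of $|\nabla u_c^{i,\xi}(t)|^2=\bigl|\nabla u_c^i+i\xi u_c^i/\kappa_i\bigr|^2$, summed with the weights $\kappa_i/2$, yields
\begin{equation*}
K(\b{\rm u}_c^\xi(t))=K(\b{\rm u}_c(t))+|\xi|^2\,\Lambda(\b{\rm u}_c(t))+\xi\cdot P(\b{\rm u}_c(t)),
\end{equation*}
where the cross term reassembles exactly as $\xi\cdot\Im\sum_i\int\overline{u_c^i}\nabla u_c^i$. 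A change of variables $y=x-2t\xi$ together with the mass-resonance identity $1/\kappa_3=1/\kappa_1+1/\kappa_2$ makes the oscillating phases in $\overline{u_c^{1,\xi}}\overline{u_c^{2,\xi}}u_c^{3,\xi}$ cancel, giving $V(\b{\rm u}_c^\xi(t))=V(\b{\rm u}_c(t))$. The same change of variables shows that $\int\!\int|\b{\rm u}_c^\xi|^4\,dx\,dt$ equals $\int\!\int|\b{\rm u}_c|^4\,dx\,dt$, so scattering size and hence the blow-up property are preserved by the boost.

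Second, because $P$ is the classical conserved momentum and $\Lambda$ is the new conserved quantity \eqref{New} (both time-independent precisely thanks to mass-resonance), the identity above implies that $K(\b{\rm u}_c^\xi(t))-K(\b{\rm u}_c(t))$ is independent of $t\in I_c$. Suppose for contradiction $P(\b{\rm u}_c)\neq\b{\rm 0}$. The hypothesis $\b{\rm u}_c\in{\rm L}_t^\infty(I_c,{\rm H}_x^1(\R^6))$ ensures $0<\Lambda(\b{\rm u}_c)<\infty$, so the optimal boost
\begin{equation*}
\xi^\ast:=-\frac{P(\b{\rm u}_c)}{2\Lambda(\b{\rm u}_c)}
\end{equation*}
is well-defined, and
\begin{equation*}
\sup_{t\in I_c}K(\b{\rm u}_c^{\xi^\ast}(t))=\sup_{t\in I_c}K(\b{\rm u}_c(t))-\frac{|P(\b{\rm u}_c)|^2}{4\,\Lambda(\b{\rm u}_c)}<\sup_{t\in I_c}K(\b{\rm u}_c(t)).
\end{equation*}

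Third, $\b{\rm u}_c^{\xi^\ast}$ is a maximal-lifespan solution of \eqref{NLS system} that blows up (since its scattering size on $I_c$ is still $+\infty$), yet its maximal kinetic energy is strictly below $K_c=\sup_{t\in I_c}K(\b{\rm u}_c(t))$. This contradicts the minimality statement of Theorem \ref{red} (equivalently, the definition of $K_c$ in \eqref{K_c}, by which any solution with $\sup_t K<K_c$ must have finite scattering size). Hence $P(\b{\rm u}_c)=\b{\rm 0}$. The main obstacle is the derivation of the transformation identity for $K$ and the cancellation of phases in $V$; once those are in hand, the weights $1/(2\kappa_i)$ in $\Lambda$ appearing naturally in the expansion explain why $\Lambda$, rather than the ordinary mass $M$, is the correct coercive term to pair with $\xi\cdot P$, which is exactly what makes the minimization argument close.
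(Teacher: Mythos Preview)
Your proof is correct and follows essentially the same route as the paper: apply the Galilean boost (available under mass-resonance), derive the identity $K(\b{\rm u}_c^{\xi})=K(\b{\rm u}_c)+|\xi|^2\Lambda(\b{\rm u}_c)+\xi\cdot P(\b{\rm u}_c)$, observe that $\b{\rm u}_c^{\xi}$ remains a blow-up solution, and then choose $\xi=-P(\b{\rm u}_c)/(2\Lambda(\b{\rm u}_c))$ to contradict the minimality of $K_c$ unless $P(\b{\rm u}_c)=\b{\rm 0}$. The only difference is cosmetic: the paper phrases the endgame as ``$|\xi|^2\Lambda+\xi\cdot P\ge0$ for all $\xi$'' and plugs in the optimal $\xi$, whereas you set up an explicit contradiction with $\sup_t K$; these are equivalent.
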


\begin{proof}
Under the mass-resonance condition,  \eqref{NLS system} enjoys the Galilean invariance property:
\[
\b{\rm u}_c(t, x)
= \left(
\begin{aligned}
u^1_c(t, x) \\
u^2_c(t, x) \\
u^3_c(t,x)
\end{aligned}
\right) \to \left(
\begin{aligned}
u_c^{1,\xi}(t, x) \\
u_c^{2,\xi}(t, x) \\
u_c^{3,\xi}(t,x)
\end{aligned}
\right)
=: \left(
\begin{aligned}
e^{i\frac{x\cdot\xi}{\kappa_1}}e^{-it\frac{\vert\xi\vert^2}{\kappa_1}}u^1(t, x-2t\xi) \\
e^{i\frac{x\cdot\xi}{\kappa_2}}e^{-it\frac{\vert\xi\vert^2}{\kappa_2}}u^2(t, x-2t\xi) \\
e^{i\frac{x\cdot\xi}{\kappa_3}}e^{-it\frac{\vert\xi\vert^2}{\kappa_3}}u^3(t, x-2t\xi)
\end{aligned}
\right)
\]
for any $ \xi \in \mathbb{R}^6 $.

Then, for $ \b{\rm u}_c^{\xi} := (u_c^{1,\xi}, u_c^{2,\xi}, u_c^{3,\xi})^T $, directly computing, we have
\[
\begin{aligned}
K(\b{\rm u}_c^{\xi}) = & ~ \sum_{l=1}^3 \frac{\kappa_l}{2}\Vert \nabla u_c^{l,\xi} \Vert_{L^2(\R^6)}^2 = \sum_{l=1}^3 \frac{\kappa_l}{2} \left\Vert i \frac{\xi}{\kappa_l} u^l_c + \nabla u^l_c \right\Vert_{L^2(\R^6)}^2 \\
= & ~ \sum_{l=1}^3 \frac{\kappa_l}{2} \int_{\R^6} \left\vert \Re(\nabla u^l_c) - \frac{\xi}{\kappa_l} \Im u^l_c \right\vert^2 + \left\vert \frac{\xi}{\kappa_l} \Re u^l_c + \Im(\nabla u^l_c) \right\vert^2 {\rm d}x \\
= & ~ \sum_{l=1}^3 \frac{\kappa_l}{2}\int_{\R^6} \left\vert \nabla u^l_c \right\vert^2 + \frac {\vert \xi \vert^2}{\vert \kappa_l \vert ^2} \vert u^l_c \vert^2 + \frac{2\xi}{\kappa_l} \left( \Re u^l_c \Im(\nabla u^l_c) - \Re(\nabla u^l_c) \Im u^l_c \right) {\rm d}x.
\end{aligned}
\]
Since the \eqref{NLS system} satisfies the mass-resonance, we use the new conserved quantity \eqref{New} to obtain
\[
\begin{aligned}
& ~ K(\b{\rm u}_c^{\xi}) - K(\b{\rm u}_c) - \vert \xi \vert^2 \Lambda(\b{\rm u}_c) \\
= & ~ \xi \cdot \int_{\R^6} \big( \Re u^1_c \Im(\nabla u^1_c) - \Re(\nabla u^1_c) \Im u_c \big) \\
& ~ + \big( \Re u^2_c \Im(\nabla u^2_c) - \Re(\nabla u^2_c) \Im u^2_c \big) \\
& ~ + \big( \Re u^3_c \Im(\nabla u^3_c) - \Re(\nabla u^3_c) \Im u^3_c \big) {\rm d}x \\
= & ~ \xi \cdot \int_{\R^6} \Im(\overline{u^1_c} \nabla u^1_c) +  \Im(\overline{u^2_c} \nabla u^2_c) + \Im(\overline{u^3_c} \nabla u^3_c) {\rm d}x
= \xi \cdot P(\b{\rm u}_c).
\end{aligned}
\]
	
It is easy to see $ S_{I_c^{\xi}}(\b{\rm u}_c^{\xi}) = S_{I_c}(\b{\rm u}_c) = \infty $.
Therefore,  $ \b{\rm u}_c^{\xi} $ is also a blow-up solution to \eqref{NLS system} .
Recalling the fact that $ \b{\rm u}_c $  has the minimal kinetic energy among all the blowing-up solutions, we have
\[
\vert \xi \vert^2 \Lambda(\b{\rm u}_c) + \xi \cdot P(\b{\rm u}_c) = K(\b{\rm u}_c^{\xi}) - K(\b{\rm u}_c) \ge 0
\]
holds for any $ \xi \in \R^6 $.
Taking $ \xi = -\frac{P(\b{\rm u}_c)}{2\Lambda(\b{\rm u}_c)} $, we get
\[
0 \le \vert \xi \vert^2 \Lambda(\b{\rm u}_c) + \xi \cdot P(\b{\rm u}_c) = -\frac{\vert P(\b{\rm u}_c) \vert^2}{4\Lambda(\b{\rm u}_c)} \le 0,
\]
which implies \eqref{p=0}.
\end{proof}

Making the use of properties of negative regularity (Theorem \ref{nr}) and zero momentum (Proposition \ref{P=0}), we can control the speed of spatial center function of soliton-type solutions very well.

\begin{lemma}[Control over $ x(t) $] \label{x(t)}
Assuming that $ \b{\rm u}_c $ is a soliton-type solution as in Proposition \ref{enemies}.
Then for any $ \eta > 0 $, there exists $ C(\eta) > 0 $ such that
\[
\sup_{t \in \R} \int_{\vert x - x(t) \vert \ge C(\eta)} \vert \b{\rm u}_c(t, x) \vert^2 {\rm d}x \lesssim \eta.
\]
Furthermore, the spatial center function $ x(t) $ moves slower and slower, i.e.,
\[
\vert x(t) \vert = o(t), \qquad t \to \infty.
\]
\end{lemma}

Both Lemma \ref{x(t)} and the finite mass of soliton-type solution, i.e., $ \lambda(t) \ge 1 $, can help us use Theorem \ref{vi} more efficiently.

\begin{theorem}\label{Soliton-th}
If the \eqref{NLS system} satisfies the mass-resonance, then there does not exist the soliton-type solution among all minimal-kinetic-energy blow up solution to \eqref{NLS system}.
\end{theorem}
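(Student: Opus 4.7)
I will argue by contradiction: assume a soliton-type critical solution $\b{\rm u}_c$ as in Proposition~\ref{enemies} exists, so $I_c = \R$, $\lambda(t)\equiv 1$, with spatial center $x(t)$. The plan is to run a truncated virial identity on a ball of radius $R$ around the origin, obtain a linear-in-$T$ lower bound for a localized momentum-type quantity $V_{2,R}$ via coercivity, and defeat it by an $O(R)$ upper bound. The two estimates collide because Lemma~\ref{x(t)} lets me take $R=o(T)$.

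First I upgrade the regularity and extract the vanishing momentum. Theorem~\ref{negative regularity} applies since $\lambda(t)\ge 1$, and yields $\b{\rm u}_c\in {\rm L}_t^\infty {\rm\dot H}^{-\eps}$ for some $\eps>0$; interpolating with $\b{\rm u}_c\in{\rm L}_t^\infty{\rm\dot H}^1$ gives $\b{\rm u}_c\in{\rm L}_t^\infty{\rm H}^1$, so each component has a finite conserved mass. Mass-resonance and Proposition~\ref{P=0} then force $P(\b{\rm u}_c)=\b{\rm 0}$, and Lemma~\ref{x(t)} delivers $|x(t)|=o(t)$ as $|t|\to\infty$. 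Almost periodicity together with the $L^2$ compactness lemma produce a modulus $C(\eta)$ with
\begin{equation*}
\int_{|x-x(t)|\ge C(\eta)} \bigl(|\nabla \b{\rm u}_c(t,x)|^2+|\b{\rm u}_c(t,x)|^2\bigr)\,{\rm d}x\le \eta,\qquad\forall\,t\in\R,
\end{equation*}
and small-data theory forbids $K(\b{\rm u}_c(t_n))\to 0$, hence $K(\b{\rm u}_c(t))\ge\delta_0>0$ uniformly.

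Next I set up the truncated virial. Pick a radial $\phi\in C_c^\infty(\R^6)$ with $\phi(x)=|x|^2$ on $\{|x|\le 1\}$ and $\phi$ constant on $\{|x|\ge 2\}$, and set $a_R(x):=R^2\phi(x/R)$. Inserting $a=a_R$ into Proposition~\ref{vi}, the mass-resonance identity $\kappa_1\kappa_2=\kappa_1\kappa_3+\kappa_2\kappa_3$ annihilates the extra term in $V_1'$, so
\begin{equation*}
\tfrac{d}{dt}V_{1,R}(t)=\kappa_1\kappa_2\kappa_3\,V_{2,R}(t),\qquad \tfrac{d}{dt}V_{2,R}(t)=8\bigl[2K(\b{\rm u}_c)-3V(\b{\rm u}_c)\bigr]+F_R(t),
\end{equation*}
where $F_R(t)$ is supported in $\{|x|\ge R\}$ and controlled by the kinetic, cubic and $R^{-2}$-weighted $L^2$ integrals on that region. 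Coercivity (Proposition~\ref{coer}) and the lower bound on $K(\b{\rm u}_c)$ give $2K-3V\ge\rho''\delta_0>0$. Choosing $R\ge C(\eta)+\sup_{s\in[0,T]}|x(s)|$ embeds $\{|x|\ge R\}$ into $\{|x-x(t)|\ge C(\eta)\}$ for every $t\in[0,T]$, whence $|F_R(t)|\le\eta$ uniformly in $t$, and for $\eta$ small enough $V_{2,R}'(t)\ge c_0>0$ on $[0,T]$.

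Integrating yields $V_{2,R}(T)-V_{2,R}(0)\ge c_0 T$, whereas Cauchy--Schwarz together with $\|\nabla a_R\|_\infty\lesssim R$ and $\b{\rm u}_c\in{\rm L}_t^\infty{\rm H}^1$ give $|V_{2,R}(t)|\lesssim R$, hence $T\lesssim R$. By Lemma~\ref{x(t)} I may take $R(T)=C(\eta)+\sup_{[0,T]}|x(s)|=o(T)$, producing $T\lesssim o(T)$ for $T$ large, the sought contradiction. The main obstacle will be the bookkeeping on $F_R$: the cubic tail $\int_{|x|\ge R}|u^1u^2u^3|\,{\rm d}x$ must be absorbed using almost-periodicity centered at $x(t)$ together with Sobolev embedding, and the $\Delta\Delta a_R$ piece (of size $R^{-2}$) requires the $L^2$ bound supplied by negative regularity. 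Mass-resonance is the pivot that simultaneously furnishes the vanishing of $P(\b{\rm u}_c)$, the clean virial identity above, and the decay $|x(t)|=o(t)$ through Lemma~\ref{x(t)}.
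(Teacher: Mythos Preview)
Your proposal is correct and follows essentially the same route as the paper: contradiction, negative regularity to land in $L_t^\infty H^1$, truncated virial on $V_{2,R}$, coercivity to get $V_{2,R}'\ge c_0>0$ once $R\ge C(\eta)+\sup_{[0,T]}|x(s)|$, the $|V_{2,R}|\lesssim R$ upper bound from Cauchy--Schwarz and finite mass, and the collision $T\lesssim R=o(T)$ via Lemma~\ref{x(t)}. The only cosmetic difference is that the paper never touches $V_1$; it works with $V_R:=V_{2,R}$ directly, so the mass-resonance cancellation in $V_1'$ that you highlight is not needed here---mass-resonance enters the argument solely through Proposition~\ref{P=0} (zero momentum), which in turn feeds Lemma~\ref{x(t)}. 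Also, the paper phrases the endgame as $E(\b{\rm u}_{c,0})=0$ (via energy trapping $K\sim E$) rather than $T\lesssim o(T)$, but the two conclusions are equivalent.
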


\begin{proof}
We prove the fact by contradiction.  Suppose $ \b{\rm u}_c : \R \times \R^6 \to \C^2 $ is a minimal-kinetic-energy blow up solution of soliton-type  to \eqref{NLS system}.
By Definition \ref{almost periodic} and $  {\dot H}^1(\R^6)\hookrightarrow L^3(\R^6) $, for any $ \eta > 0 $, there exists $ C(\eta) > 0 $ such that
\begin{equation}
\sup_{t \in \R} \int_{\vert x - x(t) \vert \ge C(\eta)} \left( \vert \nabla \b{\rm u}_c \vert^2 + \vert \b{\rm u}_c \vert^3 \right) {\rm d}x \le \eta.
\label{eta}
\end{equation}
Using Lemma \ref{x(t)}, we know there exists $ T_0 = T_0(\eta) \in \R $ such that
\begin{equation}
\vert x(t) \vert \le \eta t, \qquad \forall ~ t \ge T_0.
\label{x=0}
\end{equation}
	
Recall Theorem \ref{vi}, under the mass resonance condition \eqref{mr}, we take $ a(x) $ be a radial smooth function satisfying
\[
a(x) = \left\{
\begin{aligned}
\vert x \vert^2, \quad & \vert x \vert \le R, \\
0, \qquad & \vert x \vert \ge 2R,
\end{aligned}
\right.
\]
where $ R > 0 $ will be chosen later, we define
\[
V_R(t) :=2 \Im \int_{\mathbb{R}^6} \left(  \overline{u^1_c} \nabla u^1_c + \overline{u^2_c} \nabla u^2_c + \overline{u^3_c} \nabla u^3_c\right ) \cdot \nabla a(x) {\rm d}x
\]
By Theorem \ref{negative regularity}, we can get $ \b{\rm u}_c := (u^1_c, u^2_c, u^3_c)^T \in {\rm L}_t^\infty(\R, {\rm L}_x^2(\R^6)) $.
Then we get
\begin{equation}
|V_{R}(t)| \lesssim R K(\b{\rm u}_c) M(\b{\rm u}_c) \lesssim R
\label{<R}
\end{equation}
and
\begin{align}
V_R^{\prime}(t) = & ~ \Re \int_{\mathbb{R}^6} \left( 4\kappa_1\partial_{j}\overline{u^1} _{c}\partial_{k}u^1_{c} + 4\kappa_2\partial_{j} \overline{u^2}_{c} \partial_{k}u^2_{c}+ 4\kappa_3\partial_{j}\overline{u^3}_{c}\partial_{k}u^3_c \right) \partial_{jk} a(x) {\rm d}x \nonumber \\
& ~ - \int_{\mathbb{R}^6} \left(\kappa_1 \vert u^1_{c} \vert^2 + \kappa_2 \vert u^2_{c} \vert^2 +\kappa_3 \vert u^3_{c}\vert^2 \right) \Delta \Delta a(x) {\rm d}x \nonumber \\
& ~ - 2  \Re \int_{\mathbb{R}^6} \overline{u^1_{c}u^2_{c}} u^3_{c} \Delta a(x) {\rm d}x \nonumber \\
= & ~ 8 [ 2 K(\b{\rm u}_c) - 3 V(\b{\rm u}_c) ] + O\left( \int_{\vert x \vert \ge R} \vert \nabla \b{\rm u}_c \vert^2 + \vert \b{\rm u}_c \vert^3 {\rm d}x + \Vert \b{\rm u}_c \Vert_{{\rm L}^3(R \le \vert x \vert \le 2R)}^2 \right).  \nonumber	
\end{align}
Observing the coercivity of energy (Proposition \ref{coer}), energy trapping (Corollary \ref{Energy trapping}), and \eqref{eta}, we can choose $ \eta > 0 $ sufficiently small and take
\[
R := C(\eta) + \sup_{T_0 \le t \le T_1} \vert x(t) \vert
\]
for any $ T_0 < T_1 $. From \eqref{eta}, we have
\begin{equation}
V_{R}^{\prime\prime}(t) \gtrsim_{\kappa_1\kappa_2\kappa_3} E(\b{\rm u}_{c, 0}).
\label{Vpp}
\end{equation}
	
Applying the Fundamental Theorem of Calculus on $ [T_0, T_1] $, by \eqref{x=0}, \eqref{<R} and \eqref{Vpp}, we get
\[
(T_1 - T_0) E(\b{\rm u}_{c, 0})  \lesssim_{\kappa_1 \kappa_2 \kappa_3} R = C(\eta) + \sup_{T_0 \le t \le T_1} \vert x(t) \vert \le C(\eta) + \eta T_1, \quad \forall ~ T_1 > T_0.
\]
Setting first $ \eta \to 0 $ and then $ T_1 \to \infty $, we find $ E(\b{\rm u}_{c, 0}) = 0 $.
Using the conservation of energy and energy trapping again, we get $ K(\b{\rm u}_c(t)) = V(\b{\rm u}_c(t)) = 0, ~ \forall ~ t \in \R $. This implies $\b{\rm u}_c \equiv 0$, which is contradiction to the scattering size $S_{\mathbb{R}}(\b{\rm u}_c)=\infty$.
\end{proof}

\begin{remark}
Under the assumption that $ \b{\rm u}_0 $ is radial, we use the properties of radial function and odd function to write the momentum as follows:
\begin{equation*}
P(\b{\rm u}_c) := \Im \int_{\mathbb{R}^6} \left(  \overline{u^1(r)}_{c}u^{1 \prime}_{c}(r)\frac{x}{r} + \overline{u^2(r)}_{c} u^{2 \prime}_{c}(r)\frac{x}{r}  + \overline{u^3(r)}_{c} u^{3 \prime}_{c}(r) \frac{x}{r}\right) {\rm d}x = \b{\rm 0}.
\end{equation*}
Then we can set $ x(t) \equiv 0 $.
Let $ R := C(\eta) $, thus Theorem \ref{Soliton-th} also holds without mass-resonance.
\end{remark}

\subsection{Low-to-high frequency cascade}\label{LL}
In this part, we will use the negative regularity in Theorem \ref{negative regularity} and some compactness conditions of critical solutions to exclude the low-to-high frequency cascade.

\begin{theorem}\label{death3}
There are no global solutions to \eqref{NLS system} that are low-to-high frequency cascades as in the theorem \ref{enemies}.
\end{theorem}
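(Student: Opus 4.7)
The plan is to argue by contradiction, using the negative regularity from Theorem \ref{negative regularity} together with the almost periodicity to force the kinetic energy to vanish along a sequence of times, and then invoke the energy trapping (Proposition \ref{Energy trapping}) together with conservation of energy to conclude $\b{\rm u}_c\equiv\b{\rm 0}$.

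Suppose for contradiction that $\b{\rm u}_c$ is a low-to-high frequency cascade as in Proposition \ref{enemies}. Since $\inf_{t\in\R}\lambda(t)=1$, Theorem \ref{negative regularity} applies and yields an $\eps>0$ for which $\b{\rm u}_c\in {\rm L}_t^\infty(\R,{\rm\dot H}^{-\eps}(\R^6))$. By the cascade hypothesis, pick a sequence $t_n\to+\infty$ with $\lambda(t_n)\to\infty$, and define the rescaled functions
\[
\tilde{\b{\rm v}}_n(x):=\lambda(t_n)^{2}\b{\rm u}_c\bigl(t_n,\lambda(t_n)x+x(t_n)\bigr).
\]
These obey $K(\tilde{\b{\rm v}}_n)=K(\b{\rm u}_c(t_n))$ by the scaling invariance of the kinetic energy, and the almost periodicity modulo symmetries (Definition \ref{almost periodic}) exactly says that the family $\{\tilde{\b{\rm v}}_n\}$ is precompact in ${\rm\dot H}^1(\R^6)$. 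Extracting a subsequence, $\tilde{\b{\rm v}}_n\to \b{\rm v}_\infty$ strongly in ${\rm\dot H}^1(\R^6)$ for some $\b{\rm v}_\infty\in{\rm\dot H}^1(\R^6)$.

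On the other hand, the ${\rm\dot H}^{-\eps}(\R^6)$ norm carries a nontrivial scaling in dimension $d=6$: a direct computation using the Plancherel identity gives
\[
\|\tilde{\b{\rm v}}_n\|_{{\rm\dot H}^{-\eps}(\R^6)}^{2}=\lambda(t_n)^{-2-2\eps}\|\b{\rm u}_c(t_n)\|_{{\rm\dot H}^{-\eps}(\R^6)}^{2}\le \lambda(t_n)^{-2-2\eps}\,\|\b{\rm u}_c\|_{{\rm L}_t^\infty{\rm\dot H}^{-\eps}}^{2}\longrightarrow 0.
\]
So $\tilde{\b{\rm v}}_n\to\b{\rm 0}$ in ${\rm\dot H}^{-\eps}(\R^6)$, and hence in the sense of distributions. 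Combined with the strong ${\rm\dot H}^1$ convergence from the previous step, uniqueness of distributional limits forces $\b{\rm v}_\infty=\b{\rm 0}$, so $\tilde{\b{\rm v}}_n\to\b{\rm 0}$ in ${\rm\dot H}^1(\R^6)$, i.e.\ $K(\b{\rm u}_c(t_n))\to 0$.

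From here the contradiction is immediate: by Proposition \ref{Energy trapping}, $E(\b{\rm u}_c(t_n))\sim K(\b{\rm u}_c(t_n))\to 0$; by conservation of energy, $E(\b{\rm u}_{c,0})=0$; the lower bound in energy trapping then gives $K(\b{\rm u}_c(t))=0$ for every $t\in\R$, so $\b{\rm u}_c\equiv\b{\rm 0}$, contradicting $S_{\R}(\b{\rm u}_c)=\infty$. The main technical point is the interplay in the middle paragraph: one must justify that strong ${\rm\dot H}^1$ precompactness of $\{\tilde{\b{\rm v}}_n\}$ is compatible with vanishing in the negative-regularity space ${\rm\dot H}^{-\eps}$. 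This is pinned down by reading the almost periodicity condition through the same rescaling/translation that defines $\tilde{\b{\rm v}}_n$, so that the symmetry parameters $(\lambda(t_n),x(t_n))$ are the exact ones needed to produce compactness, while ${\rm\dot H}^{-\eps}$ sees only the explicit scaling factor $\lambda(t_n)^{-2-2\eps}$ and decays.
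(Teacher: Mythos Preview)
Your argument has a genuine gap in the middle paragraph: the rescaling you chose does \emph{not} yield $\dot{\rm H}^1$-precompactness. Definition~\ref{almost periodic} says $\b{\rm u}_c(t_n,\cdot)$ is concentrated in physical space at scale $1/\lambda(t_n)$ around $x(t_n)$ and in frequency at scale $\lambda(t_n)$. The normalization that renders the orbit precompact in ${\rm\dot H}^1$ is therefore the \emph{inverse} scaling
\[
\tilde{\b{\rm v}}_n(x)=\lambda(t_n)^{-2}\,\b{\rm u}_c\bigl(t_n,\,\lambda(t_n)^{-1}x+x(t_n)\bigr),
\]
which stretches the small-scale bump back to unit scale. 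With your choice $\lambda(t_n)^{2}\b{\rm u}_c(t_n,\lambda(t_n)x+x(t_n))$ the frequency support of $\tilde{\b{\rm v}}_n$ lives at scale $\lambda(t_n)^{2}\to\infty$: a direct computation gives
\[
\int_{|\xi|\ge R}|\xi|^2\,|\widehat{\tilde{\b{\rm v}}_n}(\xi)|^2\,{\rm d}\xi
=\int_{|\zeta|\ge R/\lambda(t_n)}|\zeta|^2\,|\widehat{\b{\rm u}_c}(t_n,\zeta)|^2\,{\rm d}\zeta,
\]
and almost periodicity only controls the tail $|\zeta|\ge C(\eta)\lambda(t_n)$, which forces $R\ge C(\eta)\lambda(t_n)^2$. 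No fixed $R$ makes the frequency tails uniformly small, so $\{\tilde{\b{\rm v}}_n\}$ is \emph{not} precompact in ${\rm\dot H}^1$ and you cannot extract a strong ${\rm\dot H}^1$ limit.

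Worse, the two desiderata are in direct tension: with the \emph{correct} normalization above one has $\|\tilde{\b{\rm v}}_n\|_{{\rm\dot H}^{-\eps}}=\lambda(t_n)^{1+\eps}\|\b{\rm u}_c(t_n)\|_{{\rm\dot H}^{-\eps}}$, which need not vanish (and typically diverges). So the ``precompact in ${\rm\dot H}^1$'' and ``vanishing in ${\rm\dot H}^{-\eps}$'' conclusions cannot be obtained from the same rescaling. The paper's proof avoids this trap by working at the $L^2$ level, which genuinely interpolates between ${\rm\dot H}^{-\eps}$ and ${\rm\dot H}^1$: one splits the mass integral in Fourier space at radius $C(\eta)\lambda(t)$, bounds the low-frequency piece by interpolating the ${\rm\dot H}^1$-smallness from almost periodicity against the uniform ${\rm\dot H}^{-\eps}$ bound, bounds the high-frequency piece by $[C(\eta)\lambda(t_n)]^{-2}K(\b{\rm W})$, and then sends $\lambda(t_n)\to\infty$ and $\eta\to 0$ to force $M(\b{\rm u}_c)=0$.
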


\begin{proof}
We argue by contradiction and suppose $ \b{\rm u}_c : I_c \times \R \to \C^2 $ is a low-to-high frequency cascade solution to \eqref{NLS system}.
By Theorem \ref{negative regularity}, we know $ \b{\rm u}_c \in {\rm L}_t^\infty(\R, {\rm L}_x^2(\R^6)) $.
Noticing the conservation of mass, we obtain
\[
0 \le M(\b{\rm u}_c) = M(\b{\rm u}_c(t)) := \frac{1}{2}\Vert u_c^1 \Vert_{L^2}^2 + \frac{1}{2} \Vert u_c^2 \Vert_{L^2}^2 +\Vert u_c^3 \Vert_{L^2}^2< \infty,
\]
for any $ t \in \R $.
Therefore, it is easy to see
\[
\int_{\R^6} \vert \b{\rm u}_c(t, x) \vert^2 {\rm d}x < \infty
\]
for any $ t \in \R $.
	
First, fixing $ t \in \R $ and choosing $ \eta > 0 $ sufficiently small, according to  Remark \ref{compactness in H^1}, we have
\begin{equation}
\int_{\vert \xi \vert \le C(\eta) \lambda(t)} \vert \xi \vert^2 \vert \widehat{\b{\rm u}_c}(t, \xi) \vert^2 {\rm d}\xi < \eta.
\label{<H1}
\end{equation}
Meanwhile, since $ \b{\rm u}_c \in {\rm L}_t^\infty(\R, {\dot{\rm H}}_x^{-\varepsilon}(\R^6)) $, we know
\begin{equation}
\int_{\vert \xi \vert \le C(\eta) \lambda(t)} \vert \xi \vert^{-2\varepsilon} \vert \widehat{\b{\rm u}_c}(t, \xi) \vert^2 {\rm d}\xi \lesssim 1.
\label{<H-}
\end{equation}
Hence, using interpolation, we obtain
\begin{equation}
\int_{\vert \xi \vert \le C(\eta) \lambda(t)} \vert \widehat{\b{\rm u}_c}(t, \xi) \vert^2 {\rm d}\xi \lesssim \eta^{\frac{\varepsilon}{1 + \varepsilon}}.
\label{<L2}
\end{equation}
In addition, by the fact that $\b{\rm u}_c$ is the minimal-kinetic-energy, we have
\begin{equation}
\begin{aligned}
\int_{\vert \xi \vert > C(\eta) \lambda(t)} \vert \widehat{\b{\rm u}_c}(t, \xi) \vert^2 {\rm d}\xi
\le & ~ [C(\eta) \lambda(t)]^{-2} \int_{\R^6} \vert \xi \vert^2 \vert \widehat{\b{\rm u}_c}(t, \xi) \vert^2 {\rm d}\xi \\
\lesssim & ~ [C(\eta) \lambda(t)]^{-2} K(\b{\rm u}_c(t)) \\
< & ~ [C(\eta) \lambda(t)]^{-2} K(\b{\rm W}).
\end{aligned}
\label{>L2}
\end{equation}
Combining \eqref{<L2}, \eqref{>L2}, and Plancherel's theorem, we have the estimate of mass as follows:
\begin{equation}
0 \le M(\b{\rm u}_c) \lesssim \eta^{\frac{\varepsilon}{1 + \varepsilon}} + [C(\eta) \lambda(t)]^{-2}, \qquad \forall ~ t \in \R.
\label{M<eta}
\end{equation}
Recalling Definition \ref{enemies}, we can find a time sequence $ \{ t_n \}_{n=1}^\infty \subset \R^+ $ such that
\[
\lim_{n \to \infty} t_n = +\infty, \quad \text{and} \quad \lim_{n \to \infty} \lambda(t_n) = +\infty,
\]
which implies the upper bound in \eqref{M<eta} becomes
\[
0 \le \lim_{n \to \infty} M(\b{\rm u}_c(t_n)) \lesssim \eta^{\frac{\varepsilon}{1 + \varepsilon}}.
\]
Let $ \eta \to 0 $, we get $ M(\b{\rm u}_c(t_n)) \to 0, ~ n \to \infty $.
Finally,  according to  the conservation of mass, we obtain $ \b{\rm u}_c \equiv \b{\rm 0} $, which is a contradiction.
\end{proof}

{\bf Acknowledgements:} {\rm We are grateful to Professor  Yi  Zhou for his guidance and encouragement, which greatly improved our original manuscript.}

\appendix
\section{Another derivation}\label{AD}
Models such as \eqref{NLS system} also arise in nonlinear optics in the context of the so-called cascading nonlinear processes.
These processes can indeed generate effective higher-order nonlinearities, and they stimulated the study of spatial solitary waves in optical materials with $ \chi^{(2)} $ or $ \chi^{(3)} $ susceptibilities (or nonlinear response, equivalently).
We refer to \cite{Ardila2021} and \cite{Oliveira2018} for the latter terminology.
It can also be derived from Maxwell's equations describing an optical monochromatic beam with its second harmonic in a Pockels-type medium.

\begin{itemize}
  \item $ \b{\rm H} $ --- magnetic field intensity
  \item $ \b{\rm D} $ --- electric displacement
  \item $ \b{\rm E} $ --- electric field intensity
  \item $ \b{\rm B} $ --- magnetic flux density
\end{itemize}

\begin{equation}\tag{$\text{Maxwell's equations}$}
\left\{
\begin{aligned}
& \b{\rm \nabla} \times \b{\rm H} = \b{\rm J} + \frac{\partial \b{\rm D}}{\partial t}, \\
& \b{\rm \nabla} \times \b{\rm E} = - \frac{\partial \b{\rm B}}{\partial t}, \\
& \b{\rm \nabla} \cdot \b{\rm B} = 0, \\
& \b{\rm \nabla} \cdot \b{\rm D} = \rho.
\end{aligned}
\right.
\label{Maxwell}
\end{equation}
Actually, the first equation in \eqref{Maxwell} is Amp\'{e}re's Law.
The second one is Faraday's Law of electromagnetic induction.
The third one is the principle of flux continuity indicating that line $ \b{\rm B} $ has no beginning and no end, which is to say there is no magnetic charge associated with the charge.
In other words, the magnetic field $ \b{\rm B} $ is a rotating passive field, so its divergence is zero.
And the last one is Gauss's law of electrostatic field, which suggests that the induced electric field excited by the changing magnetic field is a vortex field and its electric displacement line is closed.

The two additional quantities appearing in \eqref{Maxwell} are the free charge density $ \rho $ and the free current density $ \b{\rm J} $.
Under many circumstances, $ \b{\rm J} $ is given by the expression
\begin{equation}
\b{\rm J} = \sigma \b{\rm E},
\label{J-E}
\end{equation}
which can be considered to be a microscopic form of Ohm's law.
Here $ \sigma $ is the electrical conductivity of medium.

\subsection{From Maxwell}
The relationships that exist among the four electromagnetic field vectors because of purely material properties are known as the constitutive relations.
These relations, even in the presence of nonlinearities, have the form
\begin{equation}
\b{\rm D} = \varepsilon_0 \b{\rm E} + \b{\rm P},
\label{D}
\end{equation}
and
\begin{equation}
\b{\rm H} = \mu_0^{-1} \b{\rm B} - \b{\rm M}.
\label{B}
\end{equation}
The vectors $ \b{\rm P} $ and $ \b{\rm M} $ are known as the polarization and magnetization, respectively.
The polarization $ \b{\rm P} $ represents the electric dipole moment per unit volume and the magnetization $ \b{\rm M} $ denotes the magnetic dipole moment per unit volume.
Both of them may be present in the material.
The parameter $ \eps_0 $ that appears in \eqref{D} is known as the permittivity of free space and has the value $ \eps_0 = 8.85 \times 10^{-12} F/m $, which can be computed from Coulomb's law
\[
\b{\rm F} = \frac{q_1 q_2}{4\pi\eps_0 r^2} \b{\rm r}.
\]
The parameter $ \mu_0 $ in \eqref{B} is the magnetic permeability of free space and has the value $ \mu_0 = 4\pi \times 10^{-7} H/m $, which can be obtained from the following relation expression
\begin{equation}
\mu_0 \varepsilon_0 c^2 = 1
\label{c_0}
\end{equation}
with $ c = 299792458 m/s $ denoting velocity of light in vacuum.

{\bf Hypothesis.}
\begin{enumerate}
  \item We are primarily interested in the solution of these equations in regions of space that contain no free charges, so that
  \begin{equation}\tag{$\text{H1}$}
  \rho = 0.
  \label{r=0}
  \end{equation}
  \item We assume that the material is nonmagnetic, that is
  \begin{equation}\tag{$\text{H2}$}
  \b{\rm M} = \b{\rm 0}.
  \label{M=0}
  \end{equation}
  \item We also assume that the material is in the absence of free currents, which means
  \begin{equation}\tag{$\text{H3}$}
  \b{\rm J} = \b{\rm 0}.
  \label{J=0}
  \end{equation}
\end{enumerate}
Then, by \eqref{Maxwell}, \eqref{B}, \eqref{M=0} and \eqref{J=0},
\begin{equation}
\begin{aligned}
\mu_0 \frac{\partial^2 \b{\rm D}}{\partial t^2} + \b{\rm \nabla} \times \left( \b{\rm \nabla} \times \b{\rm E} \right)
= & ~ \mu_0 \frac{\partial}{\partial t} \left( \b{\rm \nabla} \times \b{\rm H} \right) + \b{\rm \nabla} \times \left( - \frac{\partial \b{\rm B}}{\partial t} \right) \\
= & ~ \b{\rm \nabla} \times \frac{\partial \b{\rm B}}{\partial t} - \b{\rm \nabla} \times \frac{\partial \b{\rm B}}{\partial t} = \b{\rm 0}.
\end{aligned}
\label{nn=0}
\end{equation}

For $ \b{\rm E} = (E^1, E^2, E^3)^T = E^1 \vec{i} + E^2 \vec{j} + E^3 \vec{k} $, we can compute
\begin{equation*}
\begin{aligned}
& ~ \b{\rm \nabla} \times \b{\rm E} =
\begin{vmatrix}
\vec{i} & \vec{j} & \vec{k} \\
\partial_x & \partial_y & \partial_z \\
E^1 & E^2 & E^3
\end{vmatrix}
= \left( \begin{vmatrix}
\partial_y & \partial_z \\
E^2 & E^3
\end{vmatrix}, -\begin{vmatrix}
\partial_x & \partial_z \\
E^1 & E^3
\end{vmatrix}, \begin{vmatrix}
\partial_x & \partial_y \\
E^1 & E^2
\end{vmatrix} \right)^T \\
= & ~ \left( E^3_y-E^2_z, E^1_z-E^3_x, E^2_x-E^1_y \right)^T,
\end{aligned}
\end{equation*}
and so
\begin{align}
& ~ \b{\rm \nabla} \times \left( \b{\rm \nabla} \times \b{\rm E} \right) =
\begin{vmatrix}
\vec{i} & \vec{j} & \vec{k} \\
\partial_x & \partial_y & \partial_z \\
E^3_y-E^2_z & E^1_z-E^3_x & E^2_x-E^1_y
\end{vmatrix} \nonumber \\
= & ~ \left( \begin{vmatrix}
\partial_y & \partial_z \\
E^1_z-E^3_x & E^2_x-E^1_y
\end{vmatrix}, -\begin{vmatrix}
\partial_x & \partial_z \\
E^3_y-E^2_z & E^2_x-E^1_y
\end{vmatrix}, \begin{vmatrix}
\partial_x & \partial_y \\
E^3_y-E^2_z & E^1_z-E^3_x
\end{vmatrix} \right)^T \nonumber \\
= & ~ \left( E^2_{xy}-E^1_{yy}-E^1_{zz}+E^3_{zx}, E^3_{yz}-E^2_{zz}-E^2_{xx}+E^1_{yx}, E^1_{zx}-E^3_{xx}-E^3_{yy}+E^2_{zy} \right)^T \nonumber \\
= & ~ -\Delta (E^1, E^2, E^3)^T + \left( E^2_{xy}+E^1_{xx}+E^3_{zx}, E^3_{yz}+E^2_{yy}+E^1_{yx}, E^1_{zx}+E^3_{zz}+E^2_{zy} \right)^T \nonumber \\
= & ~ -\Delta (E^1, E^2, E^3)^T + \b{\rm \nabla} \left( E^1_{x}+E^2_{y}+E^3_{z} \right) \nonumber \\
= & ~ - \Delta \b{\rm E} + \b{\rm \nabla}(\b{\rm \nabla} \cdot \b{\rm E}),
\label{id}
\end{align}
where $ \Delta := \partial_x^2 + \partial_y^2 + \partial_z^2 $.
Notice that \eqref{Maxwell} $ \b{\rm \nabla} \cdot \b{\rm D} = \rho $, \eqref{r=0} suggests that
\begin{equation}
\b{\rm \nabla} \cdot \b{\rm E} = 0,
\label{E=0}
\end{equation}
By plugging into \eqref{nn=0} the identity \eqref{id} and \eqref{E=0}, we have
\begin{equation}
\mu_0 \frac{\partial^2 \b{\rm D}}{\partial t^2} - \Delta \b{\rm E} = \b{\rm 0}.
\label{div}
\end{equation}

\subsection{Wave equation}
In the linear optics of isotropic source-free media, polarization $ \b{\rm P} $ has the expression (for notational simplicity) as
\[
\b{\rm P} = \eps_0 \chi^{(1)} \b{\rm E},
\]
where $ \chi^{(1)} > 0 $ is the linear electric susceptibility.
Therefore, \eqref{D} becomes
\begin{equation}
\b{\rm D} = \varepsilon_0 \eps^{(1)} \b{\rm E},
\label{D'}
\end{equation}
for $ \eps^{(1)} = 1 + \chi^{(1)} $.
These facts together with \eqref{c_0} simplify equation \eqref{div} into a wave equation as
\begin{equation}
\frac{\partial^2 \b{\rm E}}{\partial t^2} - a^2 \Delta \b{\rm E} = \b{\rm 0},
\label{NLW}
\end{equation}
with the speed of wave
\[
a = \frac{c}{\sqrt{\eps^{(1)}}} = \frac{c}{\sqrt{1 + \chi^{(1)}}} \in (0, c).
\]

In the description of nonlinear optical interactions, it is often convenient to split $ \b{\rm P} $ into its linear and nonlinear parts as
\begin{equation*}
\b{\rm P} = \b{\rm P}_L + \b{\rm P}_{NL}.
\end{equation*}
Thus, by \eqref{D}, the displacement field $ \b{\rm D} $ has the similar decomposition into its linear and nonlinear part as
\begin{equation}
\b{\rm D} = \b{\rm D}_L + \b{\rm P}_{NL},
\label{L+NL}
\end{equation}
where the linear part is given by
\[
\b{\rm D}_L = \eps_0 \b{\rm E} + \b{\rm P}_L.
\]
In terms of this quantity, the wave equation \eqref{div} can be written as
\begin{equation}
\mu_0 \frac{\partial^2 \b{\rm D}_L}{\partial t^2} - \Delta \b{\rm E} = -\mu_0 \frac{\partial^2 \b{\rm P}_{NL}}{\partial t^2}.
\label{NLWE}
\end{equation}
Let's go ahead with \eqref{NLWE} and first consider the case of a lossless dispersionless medium.
We can then express the relation between $ \b{\rm D}_L $ and $ \b{\rm E} $ in terms of a real frequency-independent dielectric tensor $ \b{\rm \eps}^{(1)} $ as
\[
\b{\rm D}_L = \eps_0 \b{\rm \eps}^{(1)} \cdot \b{\rm E}.
\]
For the case of an isotropic meterial, this relation reduces to simply
\begin{equation}
\b{\rm D}_L = \varepsilon_0 \eps^{(1)} \b{\rm E},
\label{D''}
\end{equation}
where $ \eps^{(1)} $ is a scalar quantity.
By \eqref{c_0}, we get the following vectorial wave equation
\begin{equation}
\frac{\eps^{(1)}}{c^2} \frac{\partial^2 \b{\rm E}}{\partial t^2} - \Delta \b{\rm E} = - \frac1{\eps_0 c^2} \frac{\partial^2 \b{\rm P}_{NL}}{\partial t^2}.
\label{NLW_E'}
\end{equation}

To sum up, as in \cite{Oliveira2018}, using the constitutive law $ \b{\rm D} = n^2 \eps_0 \b{\rm E} + 4\pi \eps_0 \b{\rm P}_{NL} $, \eqref{div} becomes
\begin{equation}
\frac{n^2}{c^2} \frac{\partial^2 \b{\rm E}}{\partial t^2} - \Delta \b{\rm E} = - \frac{4\pi}{c^2} \frac{\partial^2 \b{\rm P}_{NL}}{\partial t^2},
\label{NLW_E}
\end{equation}
where $ n $ is the linear refractive index.
One can see \cite{Boyd2020} and \cite{Sulem1999} for more details.

\subsection{Quadratic nonlinear Schr\"odinger system}
We apply Taylor's formula to expand $ \b{\rm P}_{NL} $ to get the presence of (at least) quadratic and cubic terms whose coefficients $ \chi^{(j)} $, which depend on the frequency of the electric field $ \b{\rm E} $ and are called $ j $th optical susceptibility:
\begin{equation}
\b{\rm P}_{NL} = \sum_{j=2}^{k} \chi^{(j)} \b{\rm E}^j,
\label{Tf}
\end{equation}
In \eqref{Tf}, the $ \chi^{(2)} $ term is Pockels effect.
This effect is one kind of electro-optical effect, which refers to the effect of some isotropic transparent material showing optical anisotropy under the action of electric field.
This results from the birefringence of plane-polarized light propagating along the optical axis of a piezoelectric crystal in an external electric field $ \b{\rm E} $.
More precisely, for the two directions of the polarization plane of the light wave parallel to and perpendicular to the external electric field $ \b{\rm E} $, different refractive indices will appear, and the difference
\[
\delta(n) := n_{\parallel} - n_{\perp} \ne 0
\]
varies with the wavelength $ \lambda $ and electric field $ \b{\rm E} $.
For example, most piezoelectric crystals can produce the Pockels effect, which states
\[
\delta(n) \varpropto \b{\rm E}
\]
when we fix $ \lambda $, and the coefficients here depend on the symmetry of the crystal.
Indeed, the non-centrosymmetric crystals are typical examples of $ \chi^{(2)} $ materials with the approximation of the type $ \b{\rm P}_{NL} \sim \chi^{(2)} \b{\rm E}^2 $.
Assuming that the beams propagate in a slab waveguide, in the direction of the $ (Oz) $ axis, we imitate \cite{Sammut1998} and decompose one of the transverse directions of $ \b{\rm E} $ in two frequency components as
\begin{equation}
\b{\rm E} = \Re \big( \b{\rm E}_1 e^{i(k_1 z - \omega t)} + \b{\rm E}_2 e^{i(k_2 z - 2 \omega t)} \big),
\label{1+3}
\end{equation}
where each frequency components satisfy \eqref{NLW_E} for suitable values of the polarization, namely, $ \b{\rm P}_{NL}(\omega) e^{-i\omega t} $ and $ \b{\rm P}_{NL}(2\omega) e^{-2i\omega t} $.
By using the fact that $ \Re(z) = \frac{z+\bar{z}}2 $ for any complex number $ z \in \C $, we have
\begin{equation*}
\begin{aligned}
\b{\rm E}^2 = & ~ \frac{\big( \b{\rm E}_1 e^{i(k_1 z - \omega t)} + \b{\rm E}_2 e^{i(k_2 z - 2 \omega t)} \big)^2 + \big( \overline{\b{\rm E}}_1 e^{-i(k_1 z - \omega t)} + \overline{\b{\rm E}}_2 e^{-i(k_2 z - 2 \omega t)} \big)^2}4 \\
& ~ + \frac{\big( \b{\rm E}_1 e^{i(k_1 z - \omega t)} + \b{\rm E}_2 e^{i(k_2 z - 2 \omega t)} \big) \big( \overline{\b{\rm E}}_1 e^{-i(k_1 z - \omega t)} + \overline{\b{\rm E}}_2 e^{-i(k_2 z - 2 \omega t)} \big)}2 \\
= & ~ \frac{\overline{\b{\rm E}}_2^2 e^{-2ik_2 z}}4 e^{4i\omega t} + \frac{\overline{\b{\rm E}_1 \b{\rm E}_3} e^{-i(k_1 + k_2)z}}2 e^{3i\omega t} + \frac{\overline{\b{\rm E}}_1^2 e^{-2ik_1 z}}4 e^{2i\omega t} \\
& ~ + \frac{\b{\rm E}_1 \overline{\b{\rm E}}_2 e^{i(k_1 - k_2)z}}2 e^{i\omega t} + \frac{\vert \b{\rm E}_1 \vert^2 + \vert \b{\rm E}_2 \vert^2}2 + \frac{\overline{\b{\rm E}}_1 \b{\rm E}_2 e^{-i(k_1 - k_2)z}}2 e^{-i\omega t} \\
& ~ + \frac{\b{\rm E}_1^2 e^{2ik_1 z}}4 e^{-2i\omega t} + \frac{\b{\rm E}_1 \b{\rm E}_2 e^{i(k_1 + k_2)z}}2 e^{-3i\omega t} + \frac{\b{\rm E}_2^2 e^{2ik_2 z}}4 e^{-4i\omega t}.
\end{aligned}
\end{equation*}
As a result, for the nonlinear polarization written in terms of the $ \chi^{(2)} $ susceptibility as
\[
\b{\rm P}_{NL} = \chi^{(2)} \b{\rm E}^2 = \chi^{(2)} \sum_{j} \b{\rm P}_{NL}(j\omega) e^{-ji\omega t},
\]
a simple correspondence yields
\begin{equation*}
\left\{
\begin{aligned}
& \b{\rm P}_{NL}(\omega) = \frac12 \big( \overline{\b{\rm E}}_1 \b{\rm E}_2 e^{-i(2k_1 - k_2)z} \big) e^{ik_1 z}, \\
& \b{\rm P}_{NL}(2\omega) = \frac14 \big( \b{\rm E}_1^2 e^{i(2k_1 - k_2) z} \big) e^{ik_2 z}.
\end{aligned}
\right.
\end{equation*}
By plugging into \eqref{NLW_E} the quantities $ \b{\rm E}_1 e^{i(k_1 z - \omega t)} $ and $ \b{\rm E}_2 e^{i(k_2 t - 2\omega t)} $, and under the slowly-varying amplitude approximation, i.e., $ \frac{\partial \b{\rm E}_j}{\partial t} \thickapprox \b{\rm 0} $ for $ j = 1, 2 $, we obtain the system
\begin{equation*}
\left\{
\begin{aligned}
& \Delta_{\perp} \b{\rm E}_1 + 2ik_1 \frac{\partial \b{\rm E}_1}{\partial z} + \bigg( \frac{n_1^2 \omega^2}{c^2} - k_1^2 \bigg) \b{\rm E}_1 = -\frac{4\pi\omega^2}{c^2} \chi(2) \b{\rm P}_{NL}(\omega) e^{-ik_1 z}, \\
& \Delta_{\perp} \b{\rm E}_2 + 2ik_2 \frac{\partial \b{\rm E}_2}{\partial z} + \bigg( \frac{4 n_2^2 \omega^2}{c^2} - k_2^2 \bigg) \b{\rm E}_2 = -\frac{4\pi\omega^2}{c^2} \chi(2) \b{\rm P}_{NL}(2\omega) e^{-ik_2 z},
\end{aligned}
\right.
\end{equation*}
where $ \Delta_{\perp} := \frac{\partial^2}{\partial x^2} + \frac{\partial^2}{\partial y^2} $ and the refractive indices $ n_j = n(j\omega) $.
If the dispersion relations $ k_j = k_j(\omega) $ in \eqref{1+3} are taken as
\begin{equation}
k_1^2 = \frac{[n(\omega)]^2 \omega^2}{c^2} \qquad \text{and} \qquad k_2^2 = \frac{4 [n(2\omega)]^2 \omega^2}{c^2},
\label{k}
\end{equation}
then we have
\begin{equation}
\left\{
\begin{aligned}
& \Delta_{\perp} \b{\rm E}_1 + 2ik_1 \frac{\partial \b{\rm E}_1}{\partial z} = -2\chi \overline{\b{\rm E}}_1 \b{\rm E}_2 e^{-i(2k_1 - k_2)z}, \\
& \Delta_{\perp} \b{\rm E}_2 + 2ik_2 \frac{\partial \b{\rm E}_2}{\partial z} = -\chi \b{\rm E}_1^2 e^{i(2k_1 - k_2) z},
\end{aligned}
\right.
\label{SE}
\end{equation}
where $ \chi = \frac{\pi\omega^2}{c^2} \chi^{(2)} $.
This is the prototype of \eqref{NLS system}, especially if we assume $ n_1 = n_2 $ then \eqref{k} tells the fact $ 2k_1 - k_2 = 0 $.
Finally, we deduce
\begin{equation}
\left\{
\begin{aligned}
& i \frac{\partial \b{\rm E}_1}{\partial z} + \frac1{2 k_1} \Delta_{\perp} \b{\rm E}_1 = -\mu_1 \overline{\b{\rm E}}_1 \b{\rm E}_2, \\
& i \frac{\partial \b{\rm E}_2}{\partial z} + \frac1{2 k_2} \Delta_{\perp} \b{\rm E}_2 = -\mu_2 \b{\rm E}_1^2,
\end{aligned}
\right.
\label{NSE}
\end{equation}
where $ \mu_1 = \frac{\pi\omega^2}{c^2 k_1} $ and $ \mu_2 = \frac{\pi\omega^2}{c^2 k_2} $.

\section{Blow-Up}\label{BU}
We have to note that more general results on blowing up of \eqref{NLS system} have been already established, in the cited paper \cite{Dinh 2021, Noguera2022}.
But we also write this part here for replenish the proof in \cite{Dinh 2021} where they, by removing the mass resonance conditions, show the subcritical $ d \le 5 $ case in detail and omit the proof of energy-critical $ d = 6 $ case (its Theorem 2.2).
On the other hand, for the sake of complete classification below the ground state $ E(\b{\rm u}_0) < E (\b{\rm W}) $ (see Remark \ref{Cc} below), a different and more direct variation is constructed here.

\begin{theorem}[Blow-up] \label{Bu}
Let $ \b{\rm u}_0 \in{{\dot{\rm H}}^1(\R^6)} $ with $ E(\b{\rm u}_0) < E (\b{\rm W}) $ and $ K(\b{\rm u}_0) > K(\b{\rm W}) $.
Assume also that either $ x \b{\rm u}_0 \in {\rm L}^2(\R^6) $ or $ \b{\rm u}_0 \in{{\rm H}^1(\R^6)} $ is radial.
When the \eqref{NLS system} satisfies the mass-reasonance, then the corresponding solution $ \b{\rm u} $ to \eqref{NLS system} blows up in finite time.
\end{theorem}

 In this section, we discuss the blow up of this system above the ground state, and prove Theorem \ref{Bu}.

\subsection{Case 1:} {\bf $x \b{\rm u}_0 \in {\rm L}^2(\R^6)$.}
In this circumstance, using the  continuity of time as similar to the proof of Proposition  \ref{coer},  we can take $\tilde\delta^\prime > 0 $ such that
\[
K(\b{\rm u}(t)) \ge (1 + \tilde\rho^\prime) K(\b{\rm W}), ~ \forall ~ t \in I_{\max}.
\]
It is easy to observe that
\begin{equation*}
\begin{aligned}
2 K(\b{\rm u}(t)) - 3 V(\b{\rm u}(t)) = & ~ 3 E(\b{\rm u}(t)) - K(\b{\rm u}(t)) \\
= & ~ 3 E(\b{\rm u}_0) - K(\b{\rm u}(t)) \\
\le & ~ 3 E(\b{\rm W}) - (1 + \tilde\rho^\prime) K(\b{\rm W}) \\
= & ~ K(\b{\rm W}) - (1 + \tilde\rho^\prime) K(\b{\rm W})
= -\tilde\rho^\prime K(\b{\rm W}).
\end{aligned}
\end{equation*}
Combining the Virial identity \eqref{I''}, we show that
\[
I^{\prime\prime}(t) = 8\kappa_1\kappa_2\kappa_3 [ 2 K(\b{\rm u}) - 3 V(\b{\rm u}) ] \le - 8\kappa_1\kappa_2\kappa_3 \tilde\rho^\prime K(\b{\rm W}), ~ \forall ~ t \in I_{\max}.
\]

Where the notation $ I_{\max}$ represents the maximal-lifespan.
Therefore, we know the \eqref{NLS system} must blow up in finite time.

\subsection{Case 2:} {\bf $\b{\rm u}_0 \in{{\rm H}^1(\R^6)} $ is radial.}
In this circumstance, we should change another Virial-type cut-off functions.
For convenience, we  first introduce some notation and the  smooth cut-off function.

 \[
 \begin{aligned}
 \tilde{I}(t) := &~\int_{\mathbb{R}^6} \left(  \kappa_2\kappa_3 \vert u^1 \vert^2 + \kappa_1\kappa_3\vert u^2 \vert^2 +\kappa_1\kappa_2 \vert u^3\vert^2 \right) a(x) {\rm d}x, \\
 &~a(x) := R^2 \chi \left( \frac{\vert x \vert^2}{R^2} \right), ~ \forall ~ R > 0,
 \end{aligned}
 \]
 where $ \chi(r) $ is a smooth concave function defined on $ [0, \infty) $ satisfying
 \[
 \chi(r) = \left\{
 \begin{aligned}
 r, \qquad r \le 1, \\
 2, \qquad r \ge 3,
 \end{aligned}
 \right. \qquad \text{and} \qquad
 \left\{
 \begin{aligned}
 \chi^{\prime\prime}(r) \searrow, \qquad r \le 2, \\
 \chi^{\prime\prime}(r) \nearrow, \qquad r \ge 2.
 \end{aligned}
 \right.
 \]
 Using  Proposition \ref{vi}, we get
 \[
 \begin{aligned}
 \tilde{I}^{\prime\prime}(t) = & ~ 8\kappa_1\kappa_2\kappa_3 \int_{\R^6} [2K(\b{\rm u}) - 3V(\b{\rm u})] {\rm d}x + \frac{1}{R^2} O \left( \int_{\vert x\vert \sim R} \vert \b{\rm u} \vert^2 {\rm d}x \right) \\
 & ~ \quad + 8\kappa_1\kappa_2\kappa_3 \int_{\R^6} \left( \chi^\prime \left( \frac{\vert x \vert^2}{R^2} \right) - 1 + \frac{2\vert x \vert^2}{R^2} \chi^{\prime\prime} \left( \frac{\vert x \vert^2}{R^2} \right) \right) [2K(\b{\rm u}) - 3V(\b{\rm u})] {\rm d}x \\
 & ~ \quad - \frac{40}{3} \Re \int_{\R^6} \frac{2\vert x \vert^2}{R^2} \chi^{\prime\prime} \left( \frac{\vert x \vert^2}{R^2} \right) \overline{u^1 u^2} u^3  {\rm d}x.
 \end{aligned}
 \]
  By the  choice of $\chi$, we know $ \chi^{\prime\prime}(r) \le 0 $.  And from $ \b{\rm u} \in {\rm L}^2(\R^6) $, we can take $ R = R(M(\b{\rm u}))) $ sufficiently large  to obtain
 \[
 \tilde{I}^{\prime\prime}(t) \le - 4\kappa_1\kappa_2\kappa_3 \tilde\rho^\prime K(\b{\rm W}) - 8\kappa_1\kappa_2\kappa_3 \int_{\R^6} \omega(x) [2K(\b{\rm u}) - 3V(\b{\rm u})] {\rm d}x,
 \]
 where
 \[
 \omega(x) = 1 - \chi^\prime \left( \frac{\vert x \vert^2}{R^2} \right) - \frac{2\vert x \vert^2}{R^2} \chi^{\prime\prime} \left( \frac{\vert x \vert^2}{R^2} \right).
 \]
 Observe that $ 0 \le \chi \le 1 $ is radial, $ {\rm supp}\;(\chi) \subset \left\{ x \big\vert \vert x \vert > R \right\} $, and
 \[
 \chi(x) \lesssim \chi(y), \;\; \text{uniformly for} ~ \vert x \vert \le \vert y \vert.
 \]
 Using the weighted radial Sobolev embedding, we know
 \[
 \left\Vert \vert x \vert^{\frac52} \chi^{\frac14} f \right\Vert_{L_x^\infty(\R^6)}^2 \lesssim \left\Vert f \right\Vert_{L_x^2(\R^6)} \left\Vert \chi^{\frac12} \nabla f \right\Vert_{L_x^2(\R^6)}.
 \]
In addition, according to the conservation of mass  and above inequality, we obtain
 \[
 \begin{aligned}
 \Re \int_{\R^6} \chi(x) \overline{u^1 u^2}u^3 {\rm d}x \lesssim & ~ \left\Vert \chi^{\frac14} \b{\rm u}(t) \right\Vert_{{\rm L}_x^\infty(\R^6)} \int_{\R^6} \vert \b{\rm u}(t, x) \vert^2 {\rm d}x \\
 \lesssim & ~ R^{-\frac52} \left\Vert \vert x \vert^{\frac52} \chi^{\frac14} \b{\rm u}(t) \right\Vert_{{\rm L}_x^\infty(\R^6)} M(\b{\rm u}) \\
 \lesssim & ~ R^{-\frac52} \left\Vert \chi^{\frac12} \nabla \b{\rm u}(t) \right\Vert_{{\rm L}_x^2(\R^6)}^{\frac12} [M(\b{\rm u})]^{\frac54}.
 \end{aligned}
 \]
 This implies that $ \tilde{I}^{\prime\prime}(t) < 0 $ by choosing $ R > 0 $ sufficiently large.

\begin{remark}[Completeness of the classification on solution below the ground state]\label{Cc}
By the similar methods from \cite{Meng2020}, we can compute
\begin{equation*}
\begin{aligned}
\frac{E(\b{\rm u})}{E(\b{\rm W})} & ~ = 3\frac{K(\b{\rm u})}{K(\b{\rm W})} - 3\frac{V(\b{\rm u})}{K(\b{\rm W})}
\ge 3\frac{K(\b{\rm u})}{K(\b{\rm W})} - 3C_{GN}\frac{[K(\b{\rm u})]^{\frac32}}{K(\b{\rm W})} \\
& ~ = 3\frac{K(\b{\rm u})}{K(\b{\rm W})} - 3\frac{V(\b{\rm W})}{[K(\b{\rm W})]^{\frac32}} \frac{[K(\b{\rm u})]^{\frac32}}{K(\b{\rm W})}
= 3\frac{K(\b{\rm u})}{K(\b{\rm W})} - 2\left[ \frac{K(\b{\rm u})}{K(\b{\rm W})} \right]^{\frac32},
\end{aligned}
\end{equation*}
where we used energy conservation, Proposition \ref{KV}, and Proposition \ref{GN}.
The above inequality holds for any time $ t \in \R $, of course for $ t = 0 $.
If the initial data is below ground state $ E(\b{\rm u}_0) < E (\b{\rm W}) $, then $ K(\b{\rm u}_0) \ne K(\b{\rm W}) $.
In fact, this can be easily checked from the graph of function $ f(x) = 3x - 2x^{\frac32} $ for $ x \ge 0 $ whose only highest point is $ (x, f(x)) = (1, 1) $, which suggests that either $ K(\b{\rm u}_0) < K(\b{\rm W}) $ or $ K(\b{\rm u}_0) > K(\b{\rm W}) $.
In this sense, Theorem \ref{Bu} is the opposite of Theorem \ref{Sb} and Corollary \ref{CSb}.
\end{remark}

\end{document}